\documentclass{article}
\usepackage[utf8]{inputenc}

\usepackage{amsmath}
\usepackage{amsfonts}
\usepackage{amssymb}
\usepackage{amsthm}
\usepackage{amsrefs}
\usepackage{tikz}
\usepackage{tikz-cd}
\usepackage{pdfpages}
\usepackage{enumerate}
\usepackage{geometry}
\usepackage{rotating}
\usepackage{xcolor,colortbl}
\usepackage{tabularx}

\usetikzlibrary{matrix,arrows,calc,shapes.geometric,decorations.pathmorphing}

\DeclareMathOperator{\lcm}{lcm}
\DeclareMathOperator{\tor}{Tor}
\let \gcd \relax
\DeclareMathOperator{\gcd}{gcd}
\DeclareMathOperator{\cl}{cl}
\DeclareMathOperator{\m}{\mathcal{M}}
\DeclareMathOperator{\N}{\mathbb{N}}
\DeclareMathOperator{\Z}{\mathbb{Z}}
\let \dim \relax
\DeclareMathOperator{\dim}{dim}

\DeclareMathOperator{\sgn}{sgn}
\let \ker \relax
\DeclareMathOperator{\ker}{ker}
\DeclareMathOperator{\im}{im}

\DeclareMathOperator{\colim}{colim}
\DeclareMathOperator{\supp}{supp}
\DeclareMathOperator{\mdeg}{mdeg}

\theoremstyle{definition}
\newtheorem{definition}{Definition}[section]
\newtheorem{theorem}[definition]{Theorem}
\newtheorem{lemma}[definition]{Lemma}
\newtheorem{corollary}[definition]{Corollary}

\newtheorem{example}[definition]{Example}
\newtheorem{remark}[definition]{Remark}
\newtheorem{construction}[definition]{Construction}

\newtheorem{maintheorem}{Theorem}

\begin{document}

\title{Massey products and the Golod property for simplicially resolvable rings}
\author{Robin Frankhuizen}

\maketitle

\begin{abstract}
We apply algebraic Morse theory to the Taylor resolution of a monomial ring $R=S/I$ to obtain an $A_{\infty}$-structure on the minimal free resolution of $R$. Using this structure we describe the vanishing of higher Massey products in case the minimal free resolution is simplicial. Under this assumption, we show that $R$ is Golod if and only if the product on $\tor^S(R,k)$ vanishes. Lastly, we give two combinatorial characterizations of the Golod property in this case. 
\end{abstract}

\section{Introduction}
Let $I = (m_1,\ldots, m_r)$ be an ideal generated by monomials in the polynomial algebra $S=k[x_1,\ldots,x_m]$ over a field $k$. The quotient $R = S/I$ is called a \emph{monomial} ring. Define formal power series by 
$$ P^R_k(t) = \sum_{j=0}^{\infty} \dim \tor^R_j(k,k) t^j \quad \mbox{ and } \quad P^S_R(t) = \sum_{j=0}^{\infty}\dim \tor^S_j(R,k) t^j.$$
The first of these is called the \emph{Poincar\'e series} of $R$. A result due to Serre \cite{serre1965} states that there is an inequality of power series
\begin{equation}
\label{serreinequality}
 P^R_k(t) \leq \frac{(1+t)^m}{1-t(P^S_R(t) -1)}.
\end{equation}
The problem of when equality is obtained goes back to at least the 70s when Golod \cite{golod1978} showed that \eqref{serreinequality} is an equality if and only if all Massey products on the Tor-algebra $\tor^S(R,k)$ vanish. In honor of this result, a monomial ring $R$ is called \emph{Golod} if \eqref{serreinequality} is an equality. In general, it is hard to directly verify the vanishing of Massey products and so in practice the Golod property is still hard to determine. 

Though the Golod property has been studied in commutative algebra from the 70s, it has recently received an increasing amount of attention in topology. The Tor-algebra shows up here in the cohomology of the so-called moment-angle complexes which are defined as follows. Let $\Delta$ be a simplicial complex on vertex set $[m] = \{1, \ldots, m \}$ and define the \emph{moment-angle complex} $Z_{\Delta}$ as follows. Let $D^2$ denote the $2$-disc and $S^1$ its bounding circle. For $\sigma \in \Delta$, define
$$X_{\sigma} = \prod_{i=1}^m Y_i \subseteq (D^2)^m \quad \mbox{ where } \quad Y_i = \begin{cases} D^2 &\mbox{ if } i \in \sigma \\ S^1 &\mbox{ if } i \notin \sigma \end{cases}$$
Lastly, we put
$$Z_{\Delta} = \colim_{\sigma \in \Delta} X_{\sigma} \subseteq (D^2)^m.$$
On the other hand, given a simplicial complex $\Delta$, the \emph{Stanley-Reisner ring} $k[\Delta]$ is defined as
$$k[\Delta] = S / (x_{i_1}\cdots x_{i_k} \mid \{i_1,\ldots,i_k \} \notin \Delta ).$$
Note that $k[\Delta]$ is a square-free monomial ring. The moment-angle complex $Z_{\Delta}$ and the Stanley-Reisner ring $k[\Delta]$ are related by a result of Baskakov, Buchstaber and Panov \cite{baskakovbuchstaberpanov2004} which states that there is an isomorphism of graded algebras
$$H^*(Z_{\Delta},k) \cong \tor^S(k[\Delta],k).$$
The homotopy type of $Z_{\Delta}$ is as of yet not well understood, but significant progress has been made for those $Z_{\Delta}$ where $\Delta$ is Golod, see for example \cite{grbictheriault2007}, \cite{grbictheriault2013}, \cite{iriyekishimoto2013b} and \cite{bebengrbic2017}.

The preceding discussion makes clear that the Golod property is of interest in both commutative algebra and algebraic topology and as a consequence at lot of attention has been devoted to find characterizations of Golodness. For example, a combinatorial characterization of Golodness in terms of the homology of the lower intervals in the lattice of saturated subsets is given in \cite{berglund2006}. It has been claimed in \cite{berglundjollenbeck2007} that $R$ is Golod if and only if the product on $\tor^S(R,k)$ vanishes. However, recently a counterexample to this claim was found in \cite{katthan2015} where the error is traced back to \cite{jollenbeck2006}. 

In \cite{frankhuizen2016}, we developed an approach to study Massey products on $\tor^S(R,k)$ using $A_{\infty}$-algebras and applied this to give necessary and sufficient conditions for Golodness for rooted rings. The purpose of this paper is to extend the methods developed in \cite{frankhuizen2016} to monomial rings whose minimal free resolution is simplicial in the sense of \cite{bayerpeevasturmfels1998}. 

The main idea is the following. As a consequence of the result in \cite{frankhuizen2016}, we can study the Golod property in terms of $A_{\infty}$-structures on the minimal free resolution of $R$. In this paper, we construct such $A_{\infty}$-structures by applying algebraic Morse theory to the Taylor resolution of a monomial ring $R$. When $R$ is simplicially resolvable (see Definition \ref{definitionsimpliciallyresolvable}), it turns out that this structure is a comparatively simple description which is given in Lemma \ref{ainfinitystructuremorsecomplexsimpliciallyresolvable}. By using this description, we obtain the first main result of this paper. 

\begin{maintheorem}
\label{maintheorem1}
Let $R=S/I$ be simplicially resolvable. Then the following are equivalent.
\begin{enumerate}
\item $R$ is Golod
\item The product on $\tor^S(R,k)$ is trivial.
\item $I$ satisfies the gcd condition. That is, for any two generators $m_1$ and $m_2$ of $I$ with $\gcd(m_1,m_2)=1$ there exists a generator $m \neq m_1,m_2$ such that $m$ divides $\lcm(m_1,m_2)$.
\item For $u,v \in \m_0$ we have $\lcm(u)\lcm(v) \neq \lcm(uv)$ whenever $uv \in \m_0$. 
\end{enumerate}
\end{maintheorem}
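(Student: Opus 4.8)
The plan is to establish the theorem as a cycle of implications among the four conditions, using the $A_\infty$-structure on the minimal free resolution as the central tool. The key input is Lemma \ref{ainfinitystructuremorsecomplexsimpliciallyresolvable}, which should give an explicit and relatively simple description of the $A_\infty$-products $m_n$ on the minimal resolution in the simplicially resolvable case; these products compute the Massey products on $\tor^S(R,k)$. The equivalence $(1)\Leftrightarrow(2)$ is where the real content lies, so I would organize the remaining implications around it. I expect the easy direction $(1)\Rightarrow(2)$ to be essentially formal: if $R$ is Golod then by Golod's theorem all Massey products vanish, and in particular the binary product (the ordinary multiplication $m_2$ on $\tor^S(R,k)$) must be trivial, since the product is the lowest nontrivial Massey operation.

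For the hard direction $(2)\Rightarrow(1)$, I would argue that under the simplicial resolvability hypothesis the vanishing of the product forces the vanishing of all the higher $A_\infty$-products $m_n$, and hence all Massey products, whence Golodness follows by Golod's criterion. This is the crux of the paper and the main obstacle: in general (as the Katth\"an counterexample cited in the introduction shows) triviality of the product does not imply Golodness, so the whole weight of the argument rests on exploiting the special combinatorial structure provided by Definition \ref{definitionsimpliciallyresolvable} via the explicit formula of Lemma \ref{ainfinitystructuremorsecomplexsimpliciallyresolvable}. Concretely, I would examine the formula for $m_n$, which should be expressible as a sum over certain chains or trees indexed by the simplicial structure, and show inductively that if $m_2$ vanishes then each summand in $m_n$ involves a factor that is forced to be zero. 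The simplicial hypothesis is precisely what should rule out the pathological ``ghost'' higher products that survive in Katth\"an's example.

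To bridge the homological conditions with the combinatorics, I would prove $(2)\Leftrightarrow(3)$ and $(3)\Leftrightarrow(4)$ as the combinatorial core. For $(2)\Leftrightarrow(3)$, the product $m_2$ on $\tor^S(R,k)$ in degree corresponding to two generators $m_1,m_2$ should be computed directly from the $A_\infty$-formula; a nonzero product between the classes of $m_1$ and $m_2$ should occur precisely when $\gcd(m_1,m_2)=1$ and no third generator $m$ divides $\lcm(m_1,m_2)$. This translates the algebraic vanishing statement into the gcd condition. The implication $(3)\Rightarrow(2)$ requires care: I must verify that the gcd condition kills \emph{every} product, not merely the products between pairs of generators, but this should follow once the $A_\infty$-formula exhibits all products in terms of lcm's of pairs. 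Finally, $(3)\Leftrightarrow(4)$ should be a translation between two combinatorial languages: the statement about $\gcd(m_1,m_2)=1$ and divisibility of $\lcm(m_1,m_2)$ on the one hand, and the multiplicativity condition $\lcm(u)\lcm(v)\neq\lcm(uv)$ for $u,v\in\mathcal{M}_0$ on the other. I would unwind the definition of $\mathcal{M}_0$ and of the $\lcm$ function on it, and check that the failure of multiplicativity $\lcm(u)\lcm(v)=\lcm(uv)$ corresponds exactly to the existence of a coprime pair with no intervening generator, making the equivalence a matter of matching the two formulations rather than any deep argument.
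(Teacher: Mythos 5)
Your skeleton agrees with the paper's up to the point where the real work starts: $(1)\Rightarrow(2)$ is indeed formal, and the paper likewise runs everything through the explicit structure $\nu_n = q\circ\lambda_n\circ j^{\otimes n}$ of Lemma \ref{ainfinitystructuremorsecomplexsimpliciallyresolvable}. The genuine gap is in your mechanism for $(2)\Rightarrow(1)$. You propose to show inductively that if the product vanishes then ``each summand in $m_n$ involves a factor that is forced to be zero.'' This cannot work as stated: triviality of the product on $\tor^S(R,k)$ is a homology-level statement (minimality of $\nu_2$), whereas the summands of the Merkulov map $\lambda_n=\sum\pm\lambda_2(\phi\lambda_s,\phi\lambda_t)$ are chain-level and their factors $\phi\lambda_k$ are typically nonzero even when all products in homology vanish. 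The paper's own worked example in Section 5 exhibits exactly this: $\mu_2(u_2,u_4)=x_3u_1u_4-x_4u_1u_2$, so $[u_2]\smile[u_4]=0$ in $\tor^S(R,k)$, and yet $\phi\lambda_2(u_2,u_4)=u_1u_2u_4\neq 0$. The paper's actual argument therefore needs two ingredients absent from your plan: (i) a lemma converting hypothesis (2) into the existence of a Morse matching $\m$ on the Taylor resolution in which \emph{every} cell $u$ with $\cl(u)\geq 2$ is matched; and (ii) the lemmas that $\lcm(\lambda_n(v_1,\ldots,v_n))$ divides $\lcm(v_1,\ldots,v_n)$, whence $\lambda_n$ applied to pairwise coprime inputs is supported on cells with $\cl\geq 2$. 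Given these, $\nu_n(u_1,\ldots,u_n)=q\lambda_n(u_1,\ldots,u_n)=0$ on coprime tuples because the projection $q$ onto critical cells annihilates everything of $\cl\geq 2$ (Theorem \ref{suffcientmorsematchinggolod}); the vanishing comes from the projection after re-choosing the matching, not from vanishing factors, and without step (i) your induction has no mechanism to run on.

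A secondary misjudgment is in how you route the combinatorics: you take $(2)\Leftrightarrow(3)$ as the computation and $(3)\Leftrightarrow(4)$ as ``a matter of matching the two formulations.'' The latter is not a translation: condition (4) quantifies over critical cells of arbitrary homological degree, while (3) concerns only pairs of generators, and passing from a failure of (4) in high degree to a coprime pair of generators with no third generator dividing their lcm essentially forces you back through the product on $\tor^S(R,k)$. The paper does the opposite: $(2)\Leftrightarrow(4)$ is the short direct computation, since for critical cells $u,v$ the product is
\begin{equation*}
q\lambda_2(u\otimes v)=\pm\frac{\lcm(u)\lcm(v)}{\lcm(uv)}\,q(uv),
\end{equation*}
which survives modulo $(x_1,\ldots,x_m)$ exactly when $uv\in\m_0$ and $\lcm(u)\lcm(v)=\lcm(uv)$; and $(2)\Leftrightarrow(3)$ is quoted from Lemma 2.4 of \cite{katthan2015}, which holds for all monomial rings. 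Placing the easy equivalence between (3) and (4) instead of between (2) and (4) puts the hard content in the wrong place.
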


In particular, the main result from \cite{berglundjollenbeck2007} does hold when restricted to simplicially resolvable rings. Next, we turn our attention to the vanishing of higher Massey products. We show that a sufficient condition for the vanishing of higher Massey products is the existence of a standard Morse matchings which were first introduced in \cite{jollenbeck2006}. More precisely, we prove the following result. 

\begin{maintheorem}
\label{maintheorem2}
Let $R$ be simplicially resolvable. Suppose that the Taylor resolution $T$ admits a standard matching. Then all higher Massey products are trivial.
\end{maintheorem}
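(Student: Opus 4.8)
The plan is to translate the statement into a vanishing statement for the higher $A_\infty$-operations and then to exploit the combinatorics of the standard matching. By the correspondence between Massey products and $A_\infty$-operations established in \cite{frankhuizen2016}, the Massey products on $\tor^S(R,k)$ are computed from the $A_\infty$-structure that the minimal free resolution $F$ inherits from the Taylor resolution: writing $m_n$ for the operation induced on $\tor^S(R,k) = F \otimes_S k$, any $n$-fold Massey product $\langle a_1,\ldots,a_n\rangle$ that is defined contains $\pm m_n(a_1,\ldots,a_n)$. Hence it suffices to show that the induced operations $m_n$ vanish for all $n \ge 3$; crucially, the product $m_2$ is allowed to be nonzero, which is exactly why this result does not by itself force Golodness and so is consistent with Theorem \ref{maintheorem1}.

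For this I would use the explicit form of the transferred structure recorded in Lemma \ref{ainfinitystructuremorsecomplexsimpliciallyresolvable}. In the homotopy-transfer description, each $m_n$ is a signed sum over trees whose leaves are the critical (surviving) Taylor cells, whose internal vertices apply the Taylor DG-multiplication, and whose internal edges insert the Morse homotopy determined by the matching. Every summand accumulates a monomial coefficient assembled from the lcm-ratios $\lcm(\sigma)\lcm(\tau)/\lcm(\sigma\cup\tau)$ of the multiplications together with the coefficients of the gradient paths realizing the homotopy. Because $F$ is minimal, the operation induced on $\tor^S(R,k)$ retains a summand only when its total coefficient is a unit; any summand whose accumulated coefficient lies in the maximal ideal $\mathfrak{m}$ dies.

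The core of the proof is then the combinatorial claim that, for a standard matching and for $n \ge 3$, every tree contributing to $m_n$ on critical inputs accumulates a coefficient in $\mathfrak{m}$. I would argue that the order-theoretic recipe defining a standard matching is incompatible with the simultaneous unit-coefficient conditions $\lcm(\sigma)\lcm(\tau)=\lcm(\sigma\cup\tau)$ at the internal vertices that a global unit would require: as soon as a tree has an internal edge---which happens precisely when $n \ge 3$---the homotopy must route through a cell at which one of these equalities fails, so a variable divides the accumulated coefficient. A natural way to organize this is by induction on the recursive structure of the standard matching, passing to the sub-ideal singled out by the defining order and using simplicial resolvability to control the lcm-lattice at each stage; the base case is the single-homotopy situation feeding $m_3$.

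I expect the main obstacle to be exactly this multidegree bookkeeping along the trees: one must show that the nonunit contribution introduced by the homotopy cannot be cancelled by unit contributions from the surrounding multiplications, uniformly over all tree shapes and all admissible inputs. Controlling the interaction between the Taylor multiplication and the gradient paths of the standard matching---rather than the reduction to $A_\infty$-operations, which is formal given \cite{frankhuizen2016} and Lemma \ref{ainfinitystructuremorsecomplexsimpliciallyresolvable}---is where the real work lies.
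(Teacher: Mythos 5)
Your opening reduction is fine and matches the paper: by \cite{frankhuizen2016} together with Lemma \ref{ainfinitystructuremorsecomplexsimpliciallyresolvable} it suffices to show that the transferred operations $\nu_n = q\circ\lambda_n\circ j^{\otimes n}$ are minimal for $n\geq 3$ on pairwise coprime critical inputs. But your core combinatorial claim --- that every contributing tree accumulates a coefficient in $\mathfrak{m}$ because ``the homotopy must route through a cell at which $\lcm(\sigma)\lcm(\tau)=\lcm(\sigma\cup\tau)$ fails'' --- is false, and it fails precisely in the only case that matters. When $u_1,\ldots,u_n$ are pairwise coprime, the relevant coefficients are units: one shows (as the paper does) that $\lcm(\phi\lambda_k(u_1,\ldots,u_k))$ divides $\lcm(u_1\cdots u_k)$, so the two arguments of the outer $\lambda_2$ in the Merkulov recursion $\lambda_n=\sum_{k+l=n}\pm\lambda_2\bigl(\phi\lambda_k(u_1,\ldots,u_k),\phi\lambda_l(u_{k+1},\ldots,u_n)\bigr)$ are coprime and that multiplication has coefficient exactly $1$; moreover the leading terms produced by $\phi$ carry unit coefficients, since arrows of a standard matching preserve multidegree and have invertible components. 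The paper's own Katth\"an--Avramov example exhibits this concretely: $\lambda_3(u_1,u_3,u_5)=u_1u_3u_4u_5+u_1u_2u_3u_5$ with unit coefficients, and the surviving term $u_1u_2u_4u_5$ of $\mu_3$ has coefficient $1$. No multidegree bookkeeping along trees can push such coefficients into the maximal ideal.

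The missing idea is that the vanishing mechanism is \emph{criticality}, not coefficients. Condition (5) of Definition \ref{standardmatching} says a standard matching is closed under multiplication by coprime cells: if $u\to v\in\mathcal{B}_i$ and $\gcd(m_u,m_w)=1$, then $uw\to vw\in\m_i$. In the paper's proof one writes $\phi\lambda_k(u_1,\ldots,u_k)=x+\sum_\alpha y_\alpha$, where $x$ and the $y_\alpha$ are matched cells whose lcm divides $\lcm(u_1\cdots u_k)$; since $\phi\lambda_l(u_{k+1},\ldots,u_n)$ is coprime to all of them, condition (5) forces each product $x\cdot\phi\lambda_l(\cdots)$ and $y_\alpha\cdot\phi\lambda_l(\cdots)$ to be again a matched, hence non-critical, cell of the Taylor resolution. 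The projection $q$ onto the critical cells therefore annihilates every summand on the nose, giving $\nu_n(u_1,\ldots,u_n)=0$ exactly, not merely modulo $\mathfrak{m}$. Your proposal never invokes condition (5) and never uses $q$ as the killing mechanism, and this is precisely where the hypothesis ``standard matching'' enters the proof; without it the argument cannot close, since for a non-standard matching the same unit-coefficient summands survive the projection and produce the nontrivial Massey product of the counterexample.
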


\section{Simplicial Resolutions}

The following method of constructing free resolutions of monomial rings is due to Bayer, Peeva and Sturmfels \cite{bayerpeevasturmfels1998}. A leisurely introduction can be found in \cite{mermin2012}. Let $\lbrace m_1, \ldots, m_r \rbrace$ be a set of monomials. Fix some total order $\prec$ on $\lbrace m_1, \ldots, m_r \rbrace$. 
After relabelling we may assume that $m_1 \prec m_2 \prec \cdots \prec m_r$. Let $\Delta$ be a simplicial complex on the vertex set $\lbrace 1,\ldots,r \rbrace$. 
By abuse of notation, we will say $\Delta$ is a simplicial complex on vertex set $\lbrace m_1, \ldots, m_r \rbrace$. Assign a multidegree $m_J$ to each simplex $J \in \Delta$ by defining 
$$m_J = \lcm \lbrace m_j \mid j \in J \rbrace.$$
Define a chain complex $F_{\Delta}$ associated to $\Delta$ as follows. 
Let $F_n$ be the free $S$-module on generators $u_J$ with $\vert J \vert = n$.
For $J = \lbrace j_1 \prec \cdots \prec j_n \rbrace$, put $J^i = \lbrace j_1 \prec \cdots \prec \widehat{j_i} \prec \cdots \prec j_n \rbrace$. 
The differential $d\colon F_n \to F_{n-1}$ is defined, for $J \in \Delta$, by 
$$ d(u_J) = \sum_{i=1}^{\vert J \vert} (-1)^{i+1} \frac{m_J}{m_{J^i}} u_{J^i}.$$

In general, $F_{\Delta}$ need not be a resolution of $S / I$. However, we have the following theorem.

\begin{theorem}[\cite{bayerpeevasturmfels1998}, Lemma 2.2]
\label{bayerpeevasturmfelstheorem}
 Let $\Delta$ be a simplicial complex on vertex set $\lbrace m_1, \ldots, m_r \rbrace$ and define, for a multidegree $\mu$, a subcomplex 
$$ \Delta_{\mu} = \lbrace J \in \Delta \mid m_J \mbox{ divides } \mu \rbrace.$$
 Then $F_{\Delta}$ is a resolution of $R$ if and only if $\Delta_{\mu}$ is either acyclic or empty for all multidegrees $\mu$. 
\end{theorem}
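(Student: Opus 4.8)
The plan is to exploit the fact that $F_{\Delta}$ is an $\N^m$-graded complex of free $S$-modules and to analyze it one multidegree at a time. Since each generator $u_J$ carries the multidegree $m_J$, the degree-$\mu$ component $(F_{\Delta})_{\mu}$ is a complex of finite-dimensional $k$-vector spaces, and the crux is that this strand is nothing but the reduced (augmented) simplicial chain complex of $\Delta_{\mu}$ up to a shift. First I would fix a multidegree $\mu$ and compute a $k$-basis of $(F_n)_{\mu}$: a monomial multiple $x^{a} u_J$ lies in multidegree $\mu$ precisely when $m_J$ divides $x^{\mu}$, and then $x^{a} = x^{\mu}/m_J$ is forced. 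Hence $(F_n)_{\mu}$ has a basis indexed by the faces $J \in \Delta$ with $\vert J \vert = n$ and $m_J \mid x^{\mu}$, that is, by the $(n-1)$-dimensional faces of $\Delta_{\mu}$.

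Next I would check that under this identification the differential $d$ becomes the simplicial boundary map. For $J \in \Delta_{\mu}$ each $J^i$ again satisfies $m_{J^i} \mid m_J \mid x^{\mu}$, so the coefficient $m_J/m_{J^i}$ is an honest monomial; after dividing through by $x^{\mu}/m_J$, the map $u_J \mapsto \sum_i (-1)^{i+1}(m_J/m_{J^i})\, u_{J^i}$ restricts in multidegree $\mu$ to $J \mapsto \sum_i (-1)^{i+1} J^i$. This is exactly the reduced boundary operator, in which the empty face (present because $m_{\emptyset}=1$ divides every $x^{\mu}$) plays the role of the $(-1)$-dimensional generator. Thus $(F_n)_{\mu} \cong \widetilde{C}_{n-1}(\Delta_{\mu};k)$ as complexes, and taking homology yields the key identity $H_n\big((F_{\Delta})_{\mu}\big) \cong \widetilde{H}_{n-1}(\Delta_{\mu};k)$ for every $n$ and every $\mu$. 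The one point demanding care — and what I expect to be the main obstacle — is bookkeeping the sign conventions and the $(-1)$-degree term so that this really is the \emph{reduced} complex rather than being off by a sign or by an augmentation; getting the empty-face normalization right is exactly what makes the ``empty'' case of the statement correspond to $\widetilde{H}_{-1}$.

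Finally I would read off the theorem. The complex $F_{\Delta}$ is a resolution of $R$ precisely when $H_n(F_{\Delta}) = 0$ for all $n \geq 1$; by the identity above this is equivalent to $\widetilde{H}_{i}(\Delta_{\mu};k) = 0$ for all $i \geq 0$ and all $\mu$, which is precisely the condition that each $\Delta_{\mu}$ be acyclic (when it has vertices, so that $\widetilde{H}_{-1}=0$ automatically) or empty (when $\Delta_{\mu}$ consists of the empty face alone, so that $\widetilde{H}_{i}=0$ for $i\geq 0$ while $\widetilde{H}_{-1}=k$). As a consistency check I would verify that the remaining homology is automatically correct: $H_0(F_{\Delta}) = \bigoplus_{\mu} \widetilde{H}_{-1}(\Delta_{\mu};k)$ is the span of those monomials $x^{\mu}$ divisible by no $m_j$, which is exactly $S/I = R$. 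This confirms that the sole obstruction to $F_{\Delta}$ being a resolution is higher homology of the subcomplexes $\Delta_{\mu}$, completing the equivalence.
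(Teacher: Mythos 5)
Your proposal is correct and complete. Be aware, though, that the paper does not prove this statement at all: it is quoted from \cite{bayerpeevasturmfels1998} (Lemma 2.2), so there is no internal argument to compare yours against, and what you have written is in essence the original Bayer--Peeva--Sturmfels proof. The decomposition into multigraded strands, the observation that $(F_n)_{\mu}$ has $k$-basis $\lbrace (x^{\mu}/m_J)\,u_J \mid J \in \Delta_{\mu},\ \vert J \vert = n \rbrace$, and the identification of the degree-$\mu$ strand with the reduced chain complex $\widetilde{C}_{*-1}(\Delta_{\mu};k)$ (with the empty face supplying the augmentation) is exactly the mechanism by which the result is proved there. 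You also handle the two delicate points correctly: the $\widetilde{H}_{-1}$ bookkeeping that separates ``acyclic'' from ``empty'', and the check that $H_0(F_{\Delta}) \cong S/I$ holds automatically, so that exactness in positive homological degrees is the only obstruction. The one hypothesis you use silently is that every $m_j$ is genuinely a vertex (face) of $\Delta$; this is what makes the image of $(F_1)_{\mu} \to (F_0)_{\mu}$ nonzero exactly when $x^{\mu} \in I$, and it is part of the meaning of ``simplicial complex on vertex set $\lbrace m_1, \ldots, m_r \rbrace$'' in the statement --- without it the claim fails (take $\Delta$ to consist of the empty face alone, so that $F_{\Delta}$ resolves $S$ rather than $S/I$).
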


\begin{definition}
\label{definitionsimplicialresolution}
A resolution $F$ is called a \emph{simplicial resolution} if $F = F_{\Delta}$ for some simplicial complex $\Delta$.
\end{definition}

An important special case of the above construction is the following. Let $\Delta = \Delta^r$ be the full $r$-simplex. Then $\Delta$ satisfies the assumptions of Theorem \ref{bayerpeevasturmfelstheorem}. Therefore, $F_{\Delta}$ is a simplicial resolution which is called the \emph{Taylor resolution} \cite{taylor1966} and will be denoted by $T$. The Taylor resolution admits a multiplication defined by
$$ u_I \cdot u_J = \begin{cases} \sgn(I,J) \frac{m_Im_J}{m_{I\cup J}} u_{I \cup J} &\mbox{ if } I \cap J = \emptyset \\ 0 &\mbox{ otherwise} \end{cases}$$
where $\sgn(I,J)$ is the sign of the permutation making $I \cup J$ into an increasing sequence.
This multiplication induces a differential graded algebra (dga) structure on $T$. The \emph{Tor-algebra} $\tor^S(S/I,k)$ of $S/I$ is 
$$ \tor^S(S/I,k) = \bigoplus_{n} \tor^S_n(S/I,k) = \bigoplus_n H_n(T \otimes_S k)$$
where the multiplication is induced by the multiplication on $T$\\


In this paper we will only consider resolutions $F$ that are as small as possible in the sense that each $F_n$ has the minimal number of generators. 
More precisely, we have the following definition. 

\begin{definition}
Let $S/I$ be a monomial ring. A free resolution $F \to S/I$ is said to be \emph{minimal} if $d(F) \subseteq (x_1,\ldots,x_m)F$. 
\end{definition}

\section{Massey products and $A_{\infty}$-algebras}
We briefly recall Massey products which were first introduced in \cite{massey1958}.

\begin{definition}
Let $(A,d)$ be a differential graded algebra. If $a \in A$, we write $\bar{a}$ for $(-1)^{\text{deg}(a) +1}a$. \\
Let $\alpha_1,\alpha_2 \in HA$. The length $2$ \textit{Massey product} $\langle \alpha_1, \alpha_2 \rangle$ is defined to be the product $\alpha_1 \alpha_2$ in the homology algebra $HA$. \\
Let $\alpha_1, \ldots, \alpha_n \in HA$ be homology classes with the property that each length $j-i+1$ Massey product $\langle \alpha_i, \ldots, \alpha_j \rangle$ is defined and contains zero for $i<j$ and $j-i < n-1$. A \emph{defining system} $\{ a_{ij} \}$ consists of
\begin{enumerate}
\item For $i=1,\ldots,n$, representing cycles $a_{i-1,i}$ of the homology class $\alpha_i$.
\item For $j > i+1$, elements $a_{ij}$ such that 
$$da_{ij} = \sum_{i<k<j} \bar{a}_{ik}a_{kj}.$$ 
\end{enumerate}
Note that the existence is guaranteed by the condition that $\langle \alpha_i, \ldots, \alpha_j \rangle$ is defined and contains zero for $i<j$ and $j-i < n-1$.
The length $n$ \textit{Massey product} $\langle \alpha_1, \ldots, \alpha_n \rangle$  is defined as the set
$$ \langle \alpha_1, \ldots, \alpha_n \rangle  = \{ [\sum_{0<i<n} \bar{a}_{0i} a_{in}]\mid \{ a_{ij} \} \mbox{ is a defining system } \} \subseteq H^{s+2-n}$$
where $s = \sum_{i=1}^n \deg \alpha_i$. 
\end{definition}
A Massey product $\langle \alpha_1, \ldots, \alpha_n \rangle$ is said to be \emph{trivial} if it contains zero. 

\begin{theorem}[\cite{golod1978}, see also Section 4.2 of \cite{gulliksenlevin1969}]
\label{golodiffmasseytrivial}
Let $R$ be a monomial ring. Then $R$ is Golod if and only if all Massey products on the Koszul homology $\tor^S(R,k)$ are trivial.
\end{theorem}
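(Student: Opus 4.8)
The plan is to prove the equivalence by the explicit resolution method originally due to Golod, organized around the notion of a trivial Massey operation. First I would identify $\tor^S(R,k)$ with the homology $H = H_*(K)$ of the Koszul complex $K = K^S \otimes_S R$ on the images of $x_1,\dots,x_m$; since $K$ is a differential graded algebra over $R$, its homology carries both a product and the full system of Massey products, and comparison with the bar resolution shows that $\tor^R_*(k,k)$ is computed by a complex built out of $H$. I would record at the outset the standard reformulation that the triviality of all Massey products is equivalent to the existence of a \emph{trivial Massey operation}, i.e.\ a map $\mu$ assigning to each tuple $(h_1,\dots,h_n)$ of basis elements of the augmentation ideal $\widetilde{H} = H_{\geq 1}$ a chain $\mu(h_1,\dots,h_n) \in K$ with $\mu(h_i)$ a representing cycle for $h_i$ and
$$ d\,\mu(h_1,\dots,h_n) = \sum_{i=1}^{n-1} \overline{\mu(h_1,\dots,h_i)}\,\mu(h_{i+1},\dots,h_n). $$

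For the direction ``all Massey products trivial $\Rightarrow$ Golod'', I would fix such a $\mu$ and use it to construct a free resolution $F \to k$ over $R$. Its underlying graded module is $K \otimes_k T(s\widetilde{H})$, where $T(s\widetilde{H})$ is the tensor algebra on the degree-shifted augmentation ideal, and its differential combines the internal Koszul differential with the operations $\mu$ applied along the tensor factors. The two points to verify are that $F$ is acyclic in positive degrees, which follows from a filtration/bar-construction argument in which the equations satisfied by $\mu$ guarantee both $d^2=0$ and exactness, and that $F$ is minimal, i.e.\ $d(F) \subseteq \mathfrak{m}F$, which holds because the representing cycles and the $\mu$-values lie in $\mathfrak{m}K$. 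Counting ranks degree by degree — the Koszul factor contributes $(1+t)^m$, and the tensor algebra on $s\widetilde{H}$ contributes $1/\bigl(1 - t(P^S_R(t)-1)\bigr)$ — then yields
$$ P^R_k(t) = \frac{(1+t)^m}{1 - t\bigl(P^S_R(t) - 1\bigr)}, $$
so equality holds in Serre's inequality and $R$ is Golod.

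For the converse I would argue that Serre's inequality is itself proved by an inductive construction of a (generally non-minimal) resolution of $k$ over $R$, in which the obstruction to extending the resolution at each stage using only the minimal number of generators is precisely a Massey product. Concretely, the difference between the two sides of the inequality in each degree is a nonnegative integer equal to a sum of ranks of the boundary contributions created by these obstructions, so equality in Serre's bound forces every such contribution to vanish; this is exactly the statement that the corresponding Massey products are trivial. In effect the argument reduces to showing that the numerical bound is attained only when a trivial Massey operation exists, closing the loop with the first direction.

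The hard part will be this converse, and specifically making precise the claim that attaining the numerical bound forces the vanishing of Massey products of \emph{all} lengths rather than merely of the ordinary product. The cleanest route is probably to phrase the whole argument through the degeneration of the spectral sequence of the change of rings $S \to R$ (equivalently the Eilenberg--Moore spectral sequence computing $\tor^R(k,k)$), whose $E_1$-page has total dimension equal to the right-hand side of Serre's inequality; equality forces $E_1 = E_\infty$, and the higher differentials of this spectral sequence are governed exactly by the Massey products, so their vanishing is equivalent to degeneration. Identifying the spectral-sequence differentials with Massey products across all lengths, together with the uniform bookkeeping of signs and defining systems, is the technical crux of the proof.
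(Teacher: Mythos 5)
Your proposal has to be measured against the classical argument of Golod and of Gulliksen--Levin (Section 4.2), since the paper itself offers no proof of this theorem and simply cites those sources. Your forward direction is essentially their argument: from a trivial Massey operation $\mu$ one forms the Golod resolution with underlying module $K \otimes_k T(s\widetilde{H})$, verifies exactness and minimality, and the rank count $(1+t)^m/\bigl(1-t(P^S_R(t)-1)\bigr)$ gives equality in Serre's bound; that part is correct in outline. The genuine gap is the step you wave through as a ``standard reformulation'': that triviality of all Massey products is \emph{equivalent} to the existence of a trivial Massey operation. Only one implication is standard --- an operation $\mu$ supplies defining systems, so every defined Massey product contains zero. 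The implication you actually use, from ``every defined Massey product contains zero'' to the existence of $\mu$, is a coherence problem, not a reformulation: triviality hands you, for each tuple \emph{separately}, some defining system whose value bounds, whereas $\mu$ must be a \emph{single} global system of chains in which $\mu(h_i,\dots,h_j)$ enters simultaneously into the defining equations of every longer tuple containing that interval; modifying $\mu$ on a subtuple to kill one product can destroy another, and your proposal gives no mechanism for splicing the tuple-by-tuple choices into one coherent operation. This distinction is exactly where this subject is treacherous --- it is why Katth\"an's condition $(B_r)$, quoted in this paper, demands that products be \emph{defined and contain only zero}, and it is the kind of subtlety behind the failed Berglund--J\"ollenbeck claim --- so it cannot be absorbed into a preliminary remark. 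Since your forward implication is routed through this unproved equivalence, it is incomplete at precisely the point where the difficulty of the stated theorem is concentrated.

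The converse has a second hole, which you yourself flag. The dimension count is fine: for a spectral sequence of $k$-vector spaces whose $E_1$-term has total Poincar\'e series equal to the right-hand side of Serre's inequality and whose abutment is $\tor^R(k,k)$, equality forces degeneration at $E_1$. But to conclude that all Massey products are trivial you must (i) identify every higher differential with (matric) Massey products in the sense of May, and (ii) pass from vanishing of differentials, which are defined only on filtration subquotients, to the statement that each Massey product --- a subset of the Koszul homology carrying its full indeterminacy --- contains zero. You name this identification as the technical crux and leave it unproved, so the converse is a plan rather than a proof. Note that the classical converse avoids spectral sequences altogether: equality in Serre's bound forces the Eagon resolution of $k$ over $R$ to be minimal, and minimality allows one to build the trivial Massey operation inductively, the obstruction cycle at each stage being forced to bound because a nonbounding obstruction would produce a unit coefficient in the differential. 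Substituting that argument would repair your converse; the forward direction, however, remains open until the products-versus-operation equivalence is either proved or circumvented.
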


Following \cite{katthan2016}, we will say that a dga $A$ satisfies \emph{condition} $(B_r)$ if all $k$-ary Massey products are defined and contain only zero for all $k \leq r$. Let $R$ be a monomial ring and let $K_S$ be the Koszul resolution of the base field $k$ over $S$. The \emph{Koszul dga} $K_R$ of $R$ is defined as $K_R = R \otimes_S K_S$. Again following \cite{katthan2016}, we say that a monomial ring $R$ satisfies $(B_r)$ if the dga $K_R$ of $R$ satisfies $(B_r)$.

\begin{lemma}
 Let $R$ be a monomial ring. Then $R$ is Golod if and only if $R$ satisfies condition $(B_r)$ for all $r \in \N$. 
\end{lemma}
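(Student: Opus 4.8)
The plan is to show the equivalence between Golodness and condition $(B_r)$ for all $r$ by chaining together the Golod-criterion already recorded as Theorem~\ref{golodiffmasseytrivial} with an identification of the Koszul homology $H(K_R)$ and the Tor-algebra. First I would recall that $K_R = R \otimes_S K_S$ computes $\tor^S(R,k)$, so that $H(K_R) \cong \tor^S(R,k)$ as graded algebras; this is what lets us transport Massey products between the two settings. The definition of condition $(B_r)$ says precisely that every $k$-ary Massey product with $k \leq r$ is defined and contains only zero, so asserting $(B_r)$ for all $r \in \N$ is exactly asserting that \emph{every} Massey product on $K_R$ is defined and trivial. Thus the content of the lemma is essentially a restatement of Theorem~\ref{golodiffmasseytrivial} once one checks that Massey products on the Koszul homology and on $\tor^S(R,k)$ agree.

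In more detail, I would argue both directions. For the forward direction, suppose $R$ is Golod. By Theorem~\ref{golodiffmasseytrivial} all Massey products on $\tor^S(R,k) \cong H(K_R)$ are trivial. I must verify that ``trivial'' in the sense of containing zero implies that the products are in fact \emph{defined}; but triviality of the lower products is exactly the inductive hypothesis that makes the next higher product defined (the defining-system condition in the definition of Massey products requires each proper sub-product to be defined and to contain zero). So by induction on $r$ every $k$-ary product is defined and contains only zero, which is condition $(B_r)$ for all $r$. Conversely, if $R$ satisfies $(B_r)$ for all $r$, then in particular every Massey product on $K_R$, hence on $\tor^S(R,k)$, is defined and contains zero, so it is trivial; Theorem~\ref{golodiffmasseytrivial} then gives that $R$ is Golod.

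The step I expect to be the main obstacle, or at least the only nonroutine point, is the compatibility of Massey products under the isomorphism $H(K_R) \cong \tor^S(R,k)$: one needs that the isomorphism is induced by a dga quasi-isomorphism (or at least an $A_\infty$-morphism) so that it carries defining systems to defining systems and hence preserves the Massey product sets, not merely the underlying algebra structure. Since the Koszul resolution is a dga resolution of $k$ over $S$ and the identification with Tor is the standard one, this is a known fact, and I would simply cite the relevant compatibility of Massey products with quasi-isomorphisms of dgas. The remaining work is the bookkeeping that ``$(B_r)$ for all $r$'' and ``all Massey products defined and trivial'' are literally the same condition once this compatibility is in hand; there is no genuine computation involved.
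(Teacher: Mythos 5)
Your proposal matches the paper exactly in approach: the paper offers no proof of this lemma at all, presenting it as an immediate consequence of Theorem~\ref{golodiffmasseytrivial} together with the definition of condition $(B_r)$ (following \cite{katthan2016}), and your argument spells out precisely that chain, with the added (reasonable, if strictly unnecessary in the paper's formulation, since Theorem~\ref{golodiffmasseytrivial} already speaks of Massey products on the Koszul homology) care about transporting Massey products along the identification $H(K_R)\cong \tor^S(R,k)$. The one point you gloss over --- exactly as the paper does --- is the gap between a Massey product \emph{containing} zero (the paper's notion of ``trivial'', which is what Theorem~\ref{golodiffmasseytrivial} delivers) and \emph{containing only} zero (which is what $(B_r)$ demands); your induction establishes ``defined and contains zero'' but the word ``only'' enters the conclusion unjustified, and closing it requires an indeterminacy argument (e.g.\ that vanishing of all products of positive-degree classes kills the indeterminacy, or an appeal to the trivial-Massey-operation form of Golod's theorem in \cite{gulliksenlevin1969} or \cite{katthan2016}).
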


In general, it is very hard to study Massey products directly. In the remainder of this section we will discuss how $A_{\infty}$-algebras provide an alternative way of studying Massey products. Since their introduction by Stasheff \cite{stasheff1963}, $A_{\infty}$-algebras have found applications in various branches of mathematics. A general overview can be found in \cite{keller2001}. Our exposition follows that of \cite{lupalmieriwuzhang2009}. All signs will be determined by the \emph{Koszul sign convention}
\begin{equation}
\label{koszulsignconvention}
(f \otimes g) (x \otimes y) = (-1)^{\vert g \vert \cdot \vert x \vert} fx \otimes gy. 
\end{equation}

\begin{definition}
Let $R$ be a commutative ring and $A = \oplus A_n$ a $\Z$-graded free $R$-module. An $A_{\infty}$-algebra structure on $A$ consists of maps $\mu_n\colon A^{\otimes n} \to A$ for each $n \geq 1$ of degree $n-2$ satisfying the \emph{Stasheff identities}
\begin{equation}
\label{stasheffidentities}
 \sum (-1)^{r+st} \mu_u(1^{\otimes r} \otimes \mu_s \otimes 1^{\otimes t}) = 0
\end{equation}
where the sum runs over all decompositions $n=r+s+t$ with $r,t \geq 0$, $s \geq 1$ and $u=r+t+1$. 
\end{definition}
Observe that when applying (\ref{stasheffidentities}) to an element additional signs appear because of the Koszul sign convention (\ref{koszulsignconvention}). In the special case when $\mu_3=0$, it follows that $\mu_2$ is strictly associative and so $A$ is a differential graded algebra with differential $\mu_1$ and multiplication $\mu_2$. An $A_{\infty}$-algebra $A$ is called \emph{strictly unital} if there exists an element $1 \in A$ that is a unit for $\mu_2$ and such that for all $n \neq 2$
$$\mu_n(a_1 \otimes \cdots \otimes a_n) = 0$$
whenever $a_i=1$ for some $i$. 



The notion of a morphism between $A_{\infty}$-algebras will also be needed. 
\begin{definition}
 Let $(A, \mu_n)$ and $(B,\overline{\mu}_n)$ be $A_{\infty}$-algebras. A \emph{morphism} of $A_{\infty}$-algebras (or $A_{\infty}$\emph{-morphism}) $f\colon A \to B$ is a family of linear maps 
 \[
  f_n\colon A^{\otimes n} \to B
 \]
of degree $n-1$ satisfying the \emph{Stasheff morphism identities}
\begin{equation}
 \label{stasheffmorphismidentities}
 \sum (-1)^{r+st}f_u(1^{\otimes r} \otimes \mu_s \otimes 1^{\otimes t}) = \sum (-1)^w \overline{\mu}_q(f_{i_1} \otimes f_{i_2} \otimes \cdots \otimes f_{i_q})
\end{equation}
for every $n \geq 1$. The first sum runs over all decompositions $n=r+s+t$ with $s \geq 1$ and $r,t \geq 0$ where $u=r+t+1$. The second sum runs over all $1 \leq q \leq n$ and all decompositions $n = i_1 + i_2 + \cdots + i_q$ with all $i_s \geq 1$. 
The sign on the right-hand side of (\ref{stasheffmorphismidentities}) is given by
\[
 w = \sum_{p=1}^{q-1}(q-p)(i_p-1).
\]
If $A$ and $B$ are strictly unital, an $A_{\infty}$-morphism is also required to satisfy $f_1(1) = 1$ and
\[
 f_n(a_1 \otimes \cdots \otimes a_n) = 0
\]
if $n \geq 2$ and $a_i = 1$ for some $i$. 
\end{definition}
A morphism $f$ is called a \emph{quasi-isomorphism} if $f_1$ is a quasi-isomorphism in the usual sense, that is if $f_1$ induces an isomorphism in homology.
Let $A$ be an $A_{\infty}$-algebra. Then its homology $HA$ is an associative algebra. A crucial result relating the $A_{\infty}$-algebra $A$ and its homology algebra $HA$ is the \emph{homotopy transfer theorem}. 
\begin{theorem}[Homotopy Transfer Theorem, \cite{kadeishvili1980}, see also \cite{merkulov1999}] 
\label{homotopytransfertheorem}
Let $(A, \mu_n)$ be an $A_{\infty}$-algebra over a field $R$ and let $HA$ be its homology algebra. 
There exists an $A_{\infty}$-algebra structure $\mu'_n$ on $HA$ such that
\begin{enumerate}
 \item $\mu'_1 = 0$, $\mu'_2 = H(\mu_2)$ and the higher $\mu'_n$ are determined by $\mu_n$
 \item there exists an $A_{\infty}$-quasi-isomorphism $HA \to A$ lifting the identity morphism of $HA$.
\end{enumerate}
Moreover, this $A_{\infty}$-structure is unique up to isomorphism of $A_{\infty}$-algebras.
\end{theorem}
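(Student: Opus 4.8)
The plan is to prove the theorem by transferring the $A_\infty$-structure along an explicit homotopy retraction, in the style of Merkulov, using in an essential way that $R$ is a field. First I would fix the homotopy data. Since $R$ is a field the complex $(A,\mu_1)$ splits: choosing a graded complement to the boundaries inside the cycles, together with a complement to the cycles, produces linear maps $i\colon HA \to A$ and $p\colon A \to HA$ and a degree $+1$ homotopy $h\colon A \to A$ such that $p$ and $i$ are chain maps (giving $HA$ the zero differential), $pi = \mathrm{id}_{HA}$, and
\[
  \mathrm{id}_A - ip = \mu_1 h + h\mu_1 .
\]
By the usual normalization I may also arrange the side conditions $ph=0$, $hi=0$ and $h^2=0$, so that $(i,p,h)$ is a contraction of $A$ onto $HA$.

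Next I would write down the transferred operations as a sum over trees. Set $\mu'_1 = 0$ and, for $n \geq 2$,
\[
  \mu'_n = \sum_{T} \pm\, p \circ \Phi_T \circ i^{\otimes n},
\]
where $T$ ranges over rooted planar trees with $n$ leaves and all internal vertices of arity at least $2$, and $\Phi_T$ is the operation on $A^{\otimes n}$ obtained by decorating each internal vertex of arity $k$ by $\mu_k$, each internal edge by $h$, and composing from the leaves to the root; the signs are those forced by the Koszul convention \eqref{koszulsignconvention}. For $n=2$ the only tree is the corolla, so $\mu'_2 = p\,\mu_2\,(i\otimes i)$, which is exactly the induced product $H(\mu_2)$, giving part (1); the formula also manifestly expresses each higher $\mu'_n$ in terms of the $\mu_k$. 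The quasi-isomorphism $f\colon HA \to A$ is defined by the same recipe with the root decoration $p$ replaced by $h$: put $f_1 = i$ and $f_n = \sum_T \pm\, h\circ \Phi_T \circ i^{\otimes n}$ for $n \geq 2$.

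The main work, and where I expect the genuine difficulty to lie, is verifying that the $\mu'_n$ satisfy the Stasheff identities \eqref{stasheffidentities} and the $f_n$ the morphism identities \eqref{stasheffmorphismidentities}. The clean way to organize this is to apply the total differential to the tree sum and observe a telescoping cancellation: every term in which $\mu_1$ meets an internal edge pairs off against a term of $\mathrm{id}_A - ip = \mu_1 h + h\mu_1$, while the surviving terms reassemble, via the Stasheff identities already satisfied by the $\mu_k$ on $A$, into the desired identities on $HA$. This is all bookkeeping of planar trees weighted by Koszul signs, and keeping the signs consistent is the only delicate point; it is handled once and for all by passing to the bar construction, where $\mu'$, $f$, and the identities become the single assertion that a certain coderivation squares to zero and a certain coalgebra map is a chain map. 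Alternatively one may establish existence by Kadeishvili's induction: assuming $\mu'_{<n}$ and $f_{<n}$ constructed, the obstruction built from them is a $\mu_1$-cycle in $A$, and applying the contraction splits it into its homology class $\mu'_n = p(\,\cdot\,)$ and a homotopy correction $f_n = -h(\,\cdot\,)$, the identity $\mathrm{id}_A - ip = \mu_1 h + h\mu_1$ forcing the $n$-th Stasheff and morphism identities to hold.

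Finally, for uniqueness up to $A_\infty$-isomorphism, I would note that two transferred structures arise from two choices of contraction data, and that a chain homotopy between the chain-level data transfers to an $A_\infty$-morphism between the two structures on $HA$ whose linear part is the identity. Since both structures have vanishing differential, such a morphism has invertible linear component and is therefore an $A_\infty$-isomorphism; phrased in the bar construction this is the standard fact that quasi-isomorphic $A_\infty$-algebras with zero differential are isomorphic.
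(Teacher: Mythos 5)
The paper does not actually prove this theorem; it is quoted from the literature (Kadeishvili, Merkulov), and your sketch is precisely the standard argument from those sources: contraction data $(i,p,h)$ with side conditions, transferred operations as signed sums over planar trees (equivalently Kadeishvili's obstruction-theoretic induction), verification packaged in the bar construction, and uniqueness via the fact that a quasi-isomorphism between minimal $A_{\infty}$-algebras (zero differential) has invertible linear part and is hence an isomorphism. Your proposal is correct in outline and is essentially the intended proof.
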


If $A$ is a dga then a more explicit way of constructing $A_{\infty}$-structures on homology is available which is originally due to Merkulov \cite{merkulov1999}. We first need the following definition.

\begin{definition}
Let $A$ be a chain complex and $B \subseteq A$ a subcomplex. A \emph{transfer diagram} is a diagram of the form
\begin{equation}
\label{transferdiagram}
\begin{tikzpicture}[baseline=(current  bounding  box.center)]
\matrix(m)[matrix of math nodes,
row sep=3em, column sep=2.8em,
text height=1.5ex, text depth=0.25ex]
{B &A\\};
\path[->]
(m-1-1) edge [bend left=35] node[yshift=1.5ex] {$i$} (m-1-2)
(m-1-2) edge [bend left=35] node[yshift=-1.5ex] {$p$} (m-1-1)
(m-1-2) edge [loop right, in=35,out=-35,looseness=5, min distance=10mm] node {$\phi$} (m-1-2)
;
\end{tikzpicture}
\end{equation}
where $pi = 1_B$ and $ip - 1 = d \phi + \phi d$. 
\end{definition}

Some authors use the term strong deformation retract for what we call a transfer diagram. 

\begin{theorem}[\cite{merkulov1999}, Theorem 3.4]
\label{merkulovtheorem}
Let $(A,d)$ be a dga and $B$ a subcomplex of $A$ such that there exists a transfer diagram of the form \eqref{transferdiagram}. Define linear maps $\lambda_n\colon A^{\otimes n} \to A$ as follows. We let $\lambda_2$ denote the product in $A$ and we set
\begin{equation}
\label{merkulovlambda}
\lambda_n = \sum_{\substack{s+t = n \\ s,t \geq 1}} (-1)^{s+1} \lambda_2 (\phi \lambda_s, \phi \lambda_t)
\end{equation}
Now, define a second series of maps $\mu_n\colon B^{\otimes n} \to B$ by setting $\mu_1 = d$ and, for $n \geq 2$, 
\begin{equation}
\label{merkulovmun}
\mu_n = p \circ \lambda_n \circ i^{\otimes n}.
\end{equation}
Then $(B,\mu_n)$ is an $A_{\infty}$-algebra.  
\end{theorem}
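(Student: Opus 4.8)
The plan is to verify directly that the maps $\mu_n = p\lambda_n i^{\otimes n}$ satisfy the Stasheff identities \eqref{stasheffidentities}. All the work is in understanding the auxiliary maps $\lambda_n$, so I would first record their basic shape. Unwinding the recursion \eqref{merkulovlambda} expresses $\lambda_n$ as a signed sum indexed by planar binary trees with $n$ leaves: each internal vertex carries the product $\lambda_2$ (degree $0$), the root carries no $\phi$, and each of the $n-2$ interior edges carries $\phi$ (degree $+1$). This gives $\deg \lambda_n = n-2$, hence $\deg \mu_n = n-2$ as required, and strict unitality of the $\mu_n$ follows once one checks it for the product $\mu_2 = p\lambda_2 i^{\otimes 2}$ and notes that inserting a unit collapses every tree.

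The crucial step is an inductive master identity for the $\lambda_n$ at the level of $A$. Writing $\delta_n = \sum_{i=1}^n 1^{\otimes i-1}\otimes d\otimes 1^{\otimes n-i}$ for the tensor differential on $A^{\otimes n}$, I would prove by induction on $n$ an identity of the schematic form
\[
 d\,\lambda_n - (-1)^{n}\,\lambda_n\,\delta_n \;=\; \sum_{\substack{s+t=n\\ s,t\ge 1}} (-1)^{s+1}\,\lambda_2\big(\lambda_s\otimes\lambda_t\big) \;+\; (\text{interior-}\phi\text{ terms}),
\]
where the interior-$\phi$ terms are produced by applying the graded Leibniz rule to $d$ acting on each factor $\phi\lambda_s$ of \eqref{merkulovlambda} and substituting the homotopy relation $d\phi = (ip-1) - \phi d$. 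The terms $\phi\,d\lambda_s$ feed back into the recursion, reproducing deeper trees, while the terms $(ip-1)\lambda_s$ split off as the genuine product terms on the right. The base case is associativity of $\lambda_2$ together with $d$ being a derivation for the product, and the inductive step is a reorganization of the tree sum.

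Finally I would descend to $B$ by composing the master identity with $p$ on the left and $i^{\otimes n}$ on the right. Since $p$ and $i$ are chain maps with $pi = 1_B$, the term $d\lambda_n i^{\otimes n}$ becomes $\mu_1\mu_n$, the term $\lambda_n\delta_n i^{\otimes n}$ becomes $\sum_i \mu_n(1^{\otimes i-1}\otimes\mu_1\otimes 1^{\otimes n-i})$, and in the product terms an interior edge carrying $ip$ sits between two subtrees and assembles, via $p\lambda_s i^{\otimes s} = \mu_s$, into the composites $\mu_{r+1+t}(1^{\otimes r}\otimes\mu_s\otimes 1^{\otimes t})$; the $-1$ part of $ip-1$ is exactly what cancels the spurious contributions so that what survives is precisely \eqref{stasheffidentities}. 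The main obstacle will be the sign bookkeeping: matching the $(-1)^{r+st}$ of \eqref{stasheffidentities} against the signs generated by the Koszul convention \eqref{koszulsignconvention} as $d$ and $\phi$ are commuted past tensor factors, and confirming that the $ip-1$ substitution telescopes correctly. I expect this to be routine but delicate, and it is the only place where genuine care is needed.
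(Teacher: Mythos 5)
The paper does not prove this statement at all: Theorem \ref{merkulovtheorem} is imported verbatim from \cite{merkulov1999} (his Theorem 3.4), so there is no internal proof to compare against. Your outline is essentially Merkulov's own argument (the one also followed in expositions such as the one the paper cites for its $A_\infty$ conventions): expand $\lambda_n$ as a signed sum over planar binary trees with $\phi$ on the $n-2$ interior edges, prove an inductive identity by combining the Leibniz rule with $d\phi = (ip-1)-\phi d$, and then sandwich with $p$ and $i^{\otimes n}$ using $pi = 1_B$. The degree count, the identification of the three mechanisms (feedback of $\phi\, d\lambda_s$ into the recursion, splitting off of $(ip-1)\lambda_s$, and assembly of $ip$-edges into $\mu_{r+1+t}(1^{\otimes r}\otimes \mu_s \otimes 1^{\otimes t})$ after the sandwich), and the role of $p$, $i$ being chain maps are all correct.

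Two repairs are needed before this is a proof. First, your displayed master identity cannot be literally correct: substituting $d\phi = (ip-1)-\phi d$ into the Leibniz expansion of $d\lambda_n$ produces terms $\lambda_2\bigl((ip-1)\lambda_s\otimes\phi\lambda_t\bigr)$ and $\pm\lambda_2\bigl(\phi\lambda_s\otimes(ip-1)\lambda_t\bigr)$ --- the other tensor factor always retains its $\phi$ --- so bare products $\lambda_2(\lambda_s\otimes\lambda_t)$ never appear. The identity you actually need, and should prove by induction, is
\[
 d\lambda_n - (-1)^n \lambda_n \delta_n \;=\; \sum_{\substack{r+s+t=n\\ 2\le s\le n-1}} (-1)^{r+st}\, \lambda_{r+1+t}\bigl(1^{\otimes r}\otimes (ip)\lambda_s \otimes 1^{\otimes t}\bigr)
\]
(up to the Koszul-sign conventions you defer). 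Here the contributions of the ``$-1$'' in $ip-1$ do not survive to the right-hand side: they must be shown to cancel in pairs, and the cancellation is by associativity of $\lambda_2$ --- contracting the identity-edge creates a ternary vertex, which has exactly two binary resolutions occurring with opposite signs (already at $n=3$ one needs $\lambda_2(\lambda_2\otimes 1)=\lambda_2(1\otimes\lambda_2)$). So associativity enters at every stage of the induction, not only in the base case as you suggest. Once the identity has this form, the descent is formal: $p\lambda_{r+1+t}(i^{\otimes r}\otimes ip\lambda_s i^{\otimes s}\otimes i^{\otimes t}) = \mu_{r+1+t}(1^{\otimes r}\otimes\mu_s\otimes 1^{\otimes t})$, $p\, d\lambda_n\, i^{\otimes n} = \mu_1\mu_n$, and $p\,\lambda_n\delta_n\, i^{\otimes n}$ gives the $\mu_1$-insertions, yielding exactly \eqref{stasheffidentities}. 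Second, the recursion \eqref{merkulovlambda} only makes sense once one fixes the convention $\phi\lambda_1 = -\mathrm{id}_A$, so that the $s=1$ and $t=1$ terms read $\mp\lambda_2(1\otimes\phi\lambda_{n-1})$ and $\pm\lambda_2(\phi\lambda_{n-1}\otimes 1)$; your tree picture assumes this tacitly (the paper's statement suppresses it too). With these two points fixed, and the sign bookkeeping carried out, your outline is a complete proof; if you want to avoid the signs entirely, the cleaner route is the homological perturbation lemma on tensor coalgebras, where the transferred codifferential is automatically square-zero and its components unwind to \eqref{merkulovmun}.
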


Now, let $R=S/I$ be a monomial ring. We will say that a map of $S$-modules $f\colon M \to N$ is \emph{minimal} if $f \otimes 1 \colon M \otimes_S k \to N \otimes_S k$ is zero. It is readily verified that $f$ is minimal if and only if $f$ maps into $(x_1, \ldots, x_m)N$. The following theorem relates $A_{\infty}$-algebras and Massey products.

\begin{theorem}[\cite{frankhuizen2016}, Theorem 4.6]
 Let $R = S/I$ be a monomial ring with minimal free resolution $F$. Let $r \in \N$ and let $\mu_n$ be an $A_{\infty}$-structure on $F$ such that $F \otimes_S k$ and $K_R$ are quasi-isomorphic as $A_{\infty}$-algebras. Then $R$ satisfies $(B_r)$ if and only if $\mu_k$ is minimal for all $k \leq r$. 
\end{theorem}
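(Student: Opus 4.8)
The plan is to translate minimality of the operations $\mu_k$ into the vanishing of the induced operations on $\tor^S(R,k)$, and then to invoke the standard dictionary between higher $A_{\infty}$-products and Massey products. First I would record the consequences of minimality of $F$. Since $d(F)\subseteq (x_1,\ldots,x_m)F$, tensoring down to $k$ kills the differential, so $F\otimes_S k$ has zero differential and coincides with its own homology as a graded $k$-vector space. Writing $\bar{\mu}_n = \mu_n\otimes_S 1$ for the induced operations, minimality of $F$ gives $\bar{\mu}_1 = 0$, and the criterion that $f$ is minimal if and only if $f$ lands in $(x_1,\ldots,x_m)N$ immediately yields the reformulation: $\mu_k$ is minimal if and only if $\bar{\mu}_k = 0$. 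Thus the statement becomes the claim that $R$ satisfies $(B_r)$ if and only if $\bar{\mu}_k = 0$ for all $2\leq k\leq r$, the case $k=1$ holding automatically.

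Next I would use the hypothesis that $F\otimes_S k$ and $K_R$ are $A_{\infty}$-quasi-isomorphic together with the uniqueness clause of the Homotopy Transfer Theorem (Theorem \ref{homotopytransfertheorem}). Since $F\otimes_S k$ already has zero differential, $(F\otimes_S k,\bar{\mu}_n)$ is a minimal $A_{\infty}$-model of $K_R$, hence $A_{\infty}$-isomorphic to the canonical transferred structure on $\tor^S(R,k)=H(K_R)$. The core step is then the correspondence: if $\bar{\mu}_2 = \cdots = \bar{\mu}_{k-1} = 0$, then for any classes $\alpha_1,\ldots,\alpha_k$ the Massey product $\langle\alpha_1,\ldots,\alpha_k\rangle$ is defined, every element of its indeterminacy factors through the product $\bar{\mu}_2$ on $\tor^S(R,k)$, and $\pm\bar{\mu}_k(\alpha_1\otimes\cdots\otimes\alpha_k)$ lies in it. I would extract this from the Stasheff identities (\ref{stasheffidentities}) by unwinding a defining system from the homotopy $\phi$ exactly as in Merkulov's formula (\ref{merkulovlambda}), matching the recursion $da_{ij}=\sum_{i<\ell<j}\bar{a}_{i\ell}a_{\ell j}$ against the $A_{\infty}$-relations so that the partial products $\phi\lambda_s$ play the role of the $a_{ij}$.

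With this dictionary both directions follow by induction on $k$. For the forward direction, suppose each $\mu_k$ with $k\leq r$ is minimal, so $\bar{\mu}_2=\cdots=\bar{\mu}_r=0$; then for every $k\leq r$ the hypothesis of the correspondence holds, the indeterminacy collapses because $\bar{\mu}_2=0$, and the Massey product equals $\{\pm\bar{\mu}_k(\alpha_1\otimes\cdots\otimes\alpha_k)\}=\{0\}$, giving $(B_r)$. Conversely, assume $(B_r)$. The base case $k=2$ says that the product on $\tor^S(R,k)$, which is precisely $\bar{\mu}_2$, contains only zero and hence vanishes. Inductively, if $\bar{\mu}_2=\cdots=\bar{\mu}_{k-1}=0$ with $k\leq r$, the $k$-ary Massey products are defined and single-valued, condition $(B_r)$ forces them to be zero, and the correspondence gives $\bar{\mu}_k(\alpha_1\otimes\cdots\otimes\alpha_k)=0$ for all basis classes, whence $\bar{\mu}_k=0$. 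Therefore $\mu_k$ is minimal for all $k\leq r$.

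The main obstacle is the correspondence step: making precise that the $k$-th operation of the minimal model represents the $k$-fold Massey product with the correct sign, that its indeterminacy collapses once $\bar{\mu}_2=0$, and that the conclusion is insensitive to the choice of $A_{\infty}$-structure within its quasi-isomorphism class. I would dispatch the last point by observing that an $A_{\infty}$-isomorphism lifting the identity conjugates the lowest nonvanishing higher operation by $f_1$, so that the vanishing pattern of the $\bar{\mu}_k$ for $k\leq r$ is an invariant of the quasi-isomorphism type and hence determined by $K_R$, equivalently by $R$, alone.
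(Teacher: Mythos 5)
This theorem is stated in the paper as a citation of \cite{frankhuizen2016}, Theorem 4.6, and carries no internal proof, so there is nothing in this paper to compare your argument against line by line; your proposal is, however, a correct reconstruction along exactly the lines one expects (and along the lines of the cited reference): translate minimality of $\mu_k$ into vanishing of $\bar{\mu}_k=\mu_k\otimes_S 1$ on $F\otimes_S k\cong\tor^S(R,k)$, identify $(F\otimes_S k,\bar{\mu}_n)$ as a minimal $A_{\infty}$-model of $K_R$ via the uniqueness clause of Theorem \ref{homotopytransfertheorem}, and then run the Stasheff--Kadeishvili dictionary between the operations of a minimal model and Massey products, inducting on arity. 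One imprecision is worth fixing. For $k\geq 4$ the indeterminacy of a $k$-fold Massey product does not ``factor through the product $\bar{\mu}_2$'': changing a defining system changes the value by elements of \emph{lower-arity Massey products}, not merely by products (for instance, replacing $a_{02}$ by $a_{02}+z$ for a cycle $z$ shifts the value of a $4$-fold product by an element of $\langle [z],\alpha_3,\alpha_4\rangle$). This does not break your induction, because the inductive hypothesis $\bar{\mu}_2=\cdots=\bar{\mu}_{k-1}=0$ forces all Massey products of arity less than $k$ to be single-valued and zero, and that is what collapses the indeterminacy in arity $k$; but the justification should be phrased through lower Massey products rather than through $\bar{\mu}_2$ alone. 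Your closing point is correct and genuinely needed: by the Stasheff morphism identities \eqref{stasheffmorphismidentities}, once the operations below arity $k$ vanish on both sides, an $A_{\infty}$-isomorphism satisfies $f_1\bar{\mu}_k=\pm\bar{\mu}'_k f_1^{\otimes k}$, so the vanishing pattern of $\bar{\mu}_2,\ldots,\bar{\mu}_r$ is an invariant of the $A_{\infty}$-quasi-isomorphism class, which is what lets you transport the conclusion between $F\otimes_S k$ and the transferred structure on $H(K_R)$.
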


In particular, the following corollary will be used extensively. 

\begin{corollary}[\cite{frankhuizen2016}, Corollary 4.6, see also \cite{burke2015}]
\label{munminimalimpliesgolod}
Let $R = S/I$ be a monomial ring with minimal free resolution $F$. Let $\mu_n$ be an $A_{\infty}$-structure on $F$ such that $F \otimes_S k$ and $K_R$ are quasi-isomorphic as $A_{\infty}$-algebras. Then $R$ is Golod if and only if $\mu_n$ is minimal for all $n \geq 1$.   
\end{corollary}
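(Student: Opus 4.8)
The plan is to deduce the corollary by combining the preceding theorem with the lemma characterizing Golodness in terms of the conditions $(B_r)$. The only real content is a rephrasing of the condition ``$\mu_n$ minimal for all $n \geq 1$'' as a statement quantified over all truncation levels $r \in \N$.

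First I would apply the lemma to replace Golodness by an assertion about the $(B_r)$: namely, $R$ is Golod if and only if $R$ satisfies $(B_r)$ for every $r \in \N$. This reduces the problem to understanding, for each individual $r$, when $(B_r)$ holds.

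Next I would invoke the preceding theorem for each fixed $r$. Since the $A_{\infty}$-structure $\mu_n$ on $F$ and the $A_{\infty}$-quasi-isomorphism between $F \otimes_S k$ and $K_R$ are fixed once and for all---independently of $r$---the hypotheses of the theorem are satisfied uniformly. Hence, for every $r$, the ring $R$ satisfies $(B_r)$ precisely when $\mu_k$ is minimal for all $k \leq r$.

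Finally I would chain the two equivalences. This gives that $R$ is Golod if and only if, for every $r \in \N$, the map $\mu_k$ is minimal for all $k \leq r$. The proof then closes with the elementary observation that this family of finite-truncation conditions holding simultaneously for all $r$ is logically equivalent to the single condition that $\mu_n$ be minimal for all $n \geq 1$. There is no substantive obstacle here: the genuine work has already been carried out in the theorem and the lemma, and the corollary is essentially their conjunction together with this quantifier manipulation.
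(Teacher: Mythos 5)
Your proposal is correct and matches the paper's intended derivation: the corollary is stated as an immediate consequence ("In particular\ldots") of the preceding theorem from \cite{frankhuizen2016} together with the lemma characterizing Golodness via the conditions $(B_r)$, which is exactly the combination you describe. The quantifier bookkeeping you carry out is the only content needed, so there is nothing to add.
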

\section{Algebraic Morse theory}
In this section we recall algebraic Morse theory that was independently developed by Sk\"oldberg \cite{skoldberg2006} and J\"ollenbeck and Welker \cite{jollenbeckwelker2009} based on earlier work by Forman \cite{forman1995,forman2002}. Our exposition follows that of \cite{skoldberg2006}.

Let $R$ be a ring with unit. A \emph{based complex} $K$ is a chain complex $(K,d)$ together with a direct sum decomposition
$$ K_n = \bigoplus_{\alpha \in I_n} K_{\alpha}$$
where the $I_n$ are pairwise disjoint. We will write $\alpha^{(n)}$ to indicate that $\alpha \in I_n$.
Let $f\colon K \to K$ be a graded map. We write $f_{\beta, \alpha}$ for the component of $f$ going from $K_{\alpha}$ to $K_{\beta}$, that is $f_{\beta, \alpha}$ is the composition
\begin{center}
\begin{tikzpicture}[descr/.style={fill=white,inner sep=1.5pt}]
        \matrix (m) [
            matrix of math nodes,
            row sep=3em,
            column sep=2em,
            text height=1.5ex, text depth=0.25ex
        ]
        {K_{\alpha} & K_m & K_n & K_{\beta} \\
         };

        \path[overlay,->, font=\small, >=latex]
                        (m-1-1) edge (m-1-2) 
                        (m-1-2) edge node[yshift=1.5ex] {$f$} (m-1-3)
                        (m-1-3) edge (m-1-4)
                        ;
\end{tikzpicture}   
\end{center} 
where $K_{\alpha} \to K_m$ is the inclusion and $K_n \to K_{\beta}$ the projection. Given a based complex $K$, define a directed graph $G_K$ with vertex set $\cup_n I_n$ and a directed edge $\alpha \to \beta$ if $d_{\beta,\alpha} \neq 0$.
We will only consider situations in which $G_K$ is finite.\\

A \emph{partial matching} on a directed graph $D=(V,E)$ is a subset $\m$ of the edges $E$ such that no vertex is incident to more than one edge of $\m$. We define a new directed graph $D^{\m} = (V,E^{\m})$ by setting
$$ E^{\m} = (E \setminus \m) \cup \lbrace \beta \to \alpha \mid \alpha \to \beta \in \m \rbrace.$$
That is to say, $D^{\m}$ is the directed graph obtained from $D$ by inverting all the edges in $\m$.

\begin{definition}
 A partial matching $\m$ on a directed graph $G_K$ is a \emph{Morse matching} if
 \begin{enumerate}
 \item for each edge $\alpha \to \beta$ in $\m$, the component $d_{\beta,\alpha}$ is an isomorphism, 
 and
 \item $G_K^{\m}$ has no directed cycles.
 \end{enumerate}
\end{definition}


A vertex in $G_K^{\m}$ that is not matched by $\m$ is called $\m$-\emph{critical} and we write $\m^0$ for the set of $\m$-critical vertices. Define
\begin{center}
\begin{tikzpicture}[descr/.style={fill=white,inner sep=1.5pt}]
        \matrix (m) [
            matrix of math nodes,
            row sep=3em,
            column sep=2em,
            text height=1.5ex, text depth=0.25ex
        ]
        {\m^-=\{ \alpha \mid \beta \to \alpha \in \m \mbox{ for some } \beta \} &  \m^+=\{ \alpha \mid \alpha \to \gamma \in \m \mbox{ for some } \gamma \}\\
         };
\end{tikzpicture}   
\end{center}
We will also write
\begin{center}
\begin{tikzpicture}[descr/.style={fill=white,inner sep=1.5pt}]
        \matrix (m) [
            matrix of math nodes,
            row sep=3em,
            column sep=2em,
            text height=1.5ex, text depth=0.25ex
        ]
        {\m_n^0 = \m^0 \cap I_n & \m_n^- = \m^- \cap I_n & \m_n^+ = \m^+ \cap I_n\\
         };
\end{tikzpicture}   
\end{center}



\begin{definition}
Let $K$ be a based complex. Denote the edges of $G_K$ by $E$. A Morse matching $\m$ on $G_K$ is called \emph{maximal} if no proper super set $\m \subseteq \m' \subseteq E$ is a Morse matching. 
\end{definition}

\begin{example}
\label{5gonmorsetheory}
Let $\Delta$ be the $5$-gon labeled as
\begin{center}
\begin{tikzpicture}
\node[regular polygon, regular polygon sides=5, minimum size=3cm, draw] at (0,0) (A) {};
\foreach \i in {1,...,5}
    \node[circle, inner sep=0pt, minimum size=5pt, label=90 + (\i-1)*72:\i, fill=black] at (A.corner \i) {};
    \draw[dashed] (A.corner 1) -- (A.corner 3);
    \draw[dashed] (A.corner 1) -- (A.corner 4);
    \draw[dashed] (A.corner 2) -- (A.corner 4);
    \draw[dashed] (A.corner 2) -- (A.corner 5);
    \draw[dashed] (A.corner 3) -- (A.corner 5);
\end{tikzpicture}
\end{center}
The Stanley-Reisner ring of $\Delta$ is 
$$k[\Delta] = k[x_1,x_2,x_3,x_4,x_5] / (x_1x_3,x_1x_4,x_2x_4,x_2x_5,x_3x_5).$$
Let $T \to k[\Delta]$ denote the Taylor resolution of $k[\Delta]$. Denote the generators respectively by $u_1,\ldots,u_5$. Given $J = \{j_1 < \cdots j_k \}$, write $u_J = u_{j_1}\cdots u_{j_k}$. Figure \ref{5gonmorsegraph} depicts the graph $G_T$ corresponding to $I$. Here, the red arrows (both solid and dashed) are invertible. The solid red arrows give an example of a maximal Morse matching on $T$. 
\end{example}

\begin{figure}
\begin{center}
\begin{sideways}
\begin{tikzpicture}[descr/.style={fill=white,inner sep=1.5pt}]

\node (45) at (0,0) {$u_{45}$};
\node (35) at (2,0) {$u_{35}$};
\node (34) at (4,0) {$u_{34}$};
\node (25) at (6,0) {$u_{25}$};
\node (24) at (8,0) {$u_{24}$};
\node (23) at (10,0) {$u_{23}$};
\node (15) at (12,0) {$u_{15}$};
\node (14) at (14,0) {$u_{14}$};
\node (13) at (16,0) {$u_{13}$};
\node (12) at (18,0) {$u_{12}$};

\node (345) at (0,2) {$u_{345}$};
\node (245) at (2,2) {$u_{245}$};
\node (235) at (4,2) {$u_{235}$};
\node (234) at (6,2) {$u_{234}$};
\node (145) at (8,2) {$u_{145}$};
\node (135) at (10,2) {$u_{135}$};
\node (134) at (12,2) {$u_{134}$};
\node (125) at (14,2) {$u_{125}$};
\node (124) at (16,2) {$u_{124}$};
\node (123) at (18,2) {$u_{123}$};

\node (2345) at (2,4) {$u_{2345}$};
\node (1345) at (6,4) {$u_{1345}$};
\node (1245) at (9,4) {$u_{1245}$};
\node (1235) at (12,4) {$u_{1235}$};
\node (1234) at (16,4) {$u_{1234}$};

\node (5) at (2,-2) {$u_{5}$};
\node (4) at (6,-2) {$u_{4}$};
\node (3) at (9,-2) {$u_{3}$};
\node (2) at (12,-2) {$u_{2}$};
\node (1) at (16,-2) {$u_{1}$};

\node (12345) at (9,6) {$u_{12345}$};

\node (0) at (9,-4) {$1$};

\path [->] (5) edge[thick] (0);
\path [->] (4) edge[thick] (0);
\path [->] (3) edge[thick] (0);
\path [->] (2) edge[thick] (0);
\path [->] (1) edge[thick] (0);

\path [->] (45) edge[thick] (5);
\path [->] (45) edge[thick] (4);

\path [->] (35) edge[thick] (5);
\path [->] (35) edge[thick] (3);

\path [->] (34) edge[thick] (4);
\path [->] (34) edge[thick] (3);

\path [->] (25) edge[thick] (5);
\path [->] (25) edge[thick] (2);

\path [->] (24) edge[thick] (4);
\path [->] (24) edge[thick] (2);

\path [->] (23) edge[thick] (3);
\path [->] (23) edge[thick] (2);

\path [->] (15) edge[thick] (5);
\path [->] (15) edge[thick] (1);

\path [->] (14) edge[thick] (4);
\path [->] (14) edge[thick] (1);

\path [->] (13) edge[thick] (3);
\path [->] (13) edge[thick] (1);

\path [->] (12) edge[thick] (2);
\path [->] (12) edge[thick] (1);

\path [->] (345) edge[thick] (45);
\path [->] (345) edge[red,thick] (35);
\path [->] (345) edge[thick] (34);

\path [->] (245) edge[thick] (45);
\path [->] (245) edge[thick] (25);
\path [->] (245) edge[thick] (24);

\path [->] (235) edge[thick] (35);
\path [->] (235) edge[thick] (25);
\path [->] (235) edge[thick] (23);

\path [->] (234) edge[thick] (34);
\path [->] (234) edge[red,thick] (24);
\path [->] (234) edge[thick] (23);

\path [->] (145) edge[thick] (45);
\path [->] (145) edge[thick] (15);
\path [->] (145) edge[thick] (14);

\path [->] (135) edge[thick] (35);
\path [->] (135) edge[thick] (15);
\path [->] (135) edge[thick] (13);

\path [->] (134) edge[thick] (34);
\path [->] (134) edge[thick] (14);
\path [->] (134) edge[thick] (13);

\path [->] (125) edge[red,thick] (25);
\path [->] (125) edge[thick] (15);
\path [->] (125) edge[thick] (12);

\path [->] (124) edge[thick] (24);
\path [->] (124) edge[thick] (14);
\path [->] (124) edge[thick] (12);

\path [->] (123) edge[thick] (23);
\path [->] (123) edge[red,thick] (13);
\path [->] (123) edge[thick] (12);

\path [->] (2345) edge[thick] (345);
\path [->] (2345) edge[red,thick] (245);
\path [->] (2345) edge[red,dashed,thick] (235);
\path [->] (2345) edge[thick] (234);

\path [->] (1345) edge[thick] (345);
\path [->] (1345) edge[thick] (145);
\path [->] (1345) edge[red,thick] (135);
\path [->] (1345) edge[red,dashed,thick] (134);

\path [->] (1245) edge[red,dashed,thick] (245);
\path [->] (1245) edge[thick] (145);
\path [->] (1245) edge[thick] (125);
\path [->] (1245) edge[red,dashed,thick] (124);

\path [->] (1235) edge[red,dashed,thick] (235);
\path [->] (1235) edge[red,dashed,thick] (135);
\path [->] (1235) edge[thick] (125);
\path [->] (1235) edge[thick] (123);

\path [->] (1234) edge[thick] (234);
\path [->] (1234) edge[red,dashed,thick] (134);
\path [->] (1234) edge[red,thick] (124);
\path [->] (1234) edge[thick] (123);

\path [->] (12345) edge[red,dashed,thick] (2345);
\path [->] (12345) edge[red,dashed,thick] (1345);
\path [->] (12345) edge[red,dashed,thick] (1245);
\path [->] (12345) edge[red,thick] (1235);
\path [->] (12345) edge[red,dashed,thick] (1234);

\end{tikzpicture}   
\end{sideways}
\end{center} 
\caption{The graph $G_T$ corresponding to the face ideal $I$ of the $5$-gon.}
\label{5gonmorsegraph}
\end{figure}
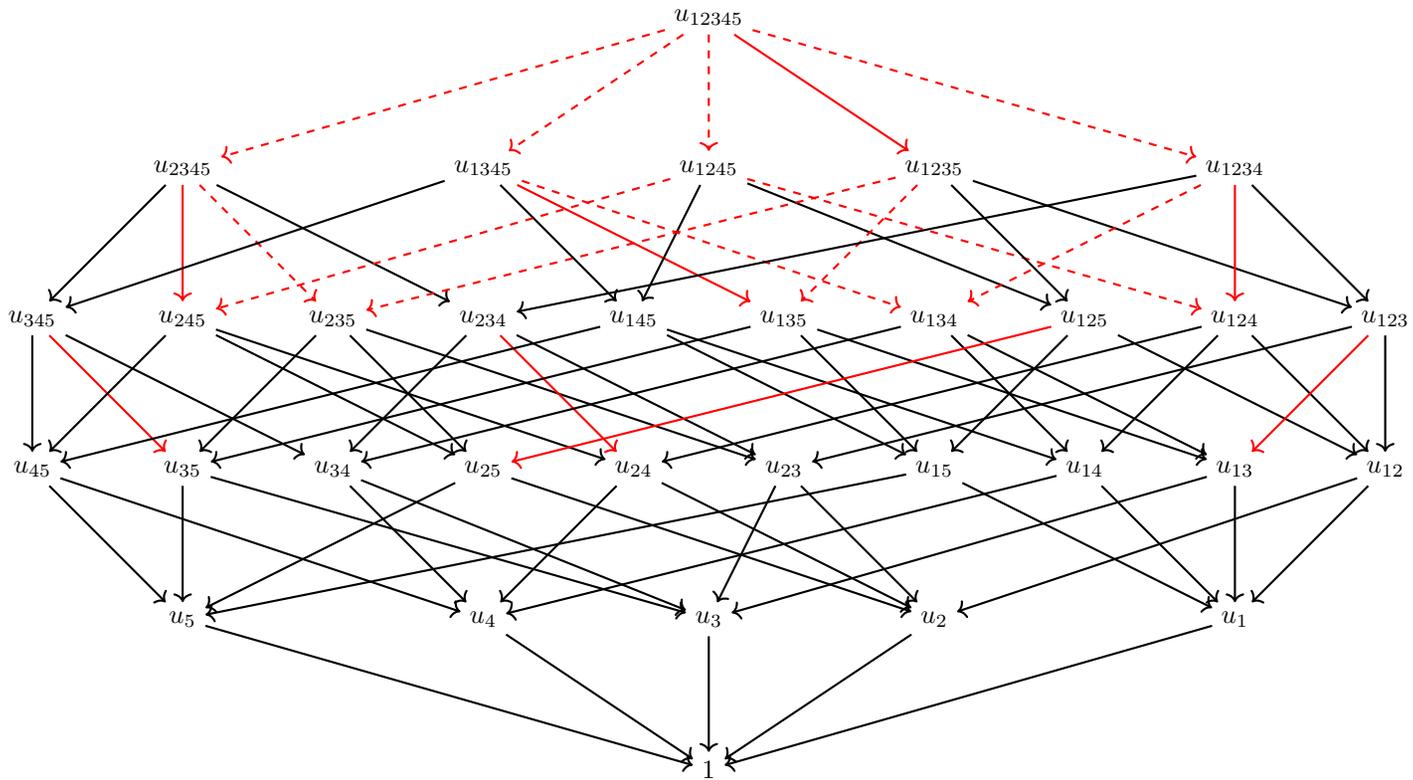

Given a Morse matching $\m$ on a based complex $K$, our next goal is to define a map $\phi\colon K \to K$ of degree $1$ and show that it is a splitting homotopy in the sense of \cite{barneslambe1991}. 
We recall the following definition from \cite{barneslambe1991}.

\begin{definition}
 Let $K$ be a chain complex and $\phi\colon K \to K$ a degree $1$ map. Then $\phi$ is called a \emph{splitting homotopy} if 
\begin{equation*}
\begin{split}
\phi^2 &= 0,\\
\phi d \phi &= \phi. 
\end{split}
\end{equation*}
\end{definition}

Fix a Morse matching $\m$ and write $K_n = \bigoplus_{\alpha \in I_n} K_{\alpha}$. Define a relation $\prec$ on $I_n$ by setting $\alpha \prec \beta$ if there is a directed path from $\alpha$ to $\beta$ in $G_K^{\m}$. Note that since $G_K^{\m}$ does not contain any directed cycles, the relation $\prec$ is a well-founded partial order. Define $\phi$ by induction on $\prec$ as follows. If $\alpha$ is minimal with respect to $\prec$ and $x \in K_{\alpha}$, put
$$ \phi(x) = \begin{cases} d^{-1}_{\alpha,\beta}(x) &\mbox{ if } \beta \to \alpha \in \m \mbox{ for some } \beta, \\ 0 &\mbox{ otherwise. } \end{cases}$$
If $\alpha$ is not minimal with respect to $\prec$ and $x \in K_{\alpha}$, put
$$ \phi(x) = \begin{cases} d^{-1}_{\alpha,\beta}(x) - \sum_{\substack{\beta \to \gamma \\ \gamma \neq \alpha}} \phi d_{\gamma,\beta} d_{\alpha,\beta}^{-1}(x) &\mbox{ if } \beta \to \alpha \in \m \mbox{ for some } \beta, \\ 0 &\mbox{ otherwise. } \end{cases}$$
Note that for all $\gamma$ in the last sum we have $\gamma \prec \alpha$ and so $\phi$ is well-defined. 
Observe that the second definition of $\phi$ is only relevant if $G_K^{\m}$ has a subgraph of the form

\begin{center}
\begin{tikzpicture}[descr/.style={fill=white,inner sep=1.5pt}]

\node (1) at (0,0) {$\alpha$};
\node (2) at (2,0) {$\gamma$};
\node (3) at (0,2) {$\beta$};
\node (4) at (2,2) {$\phi(\gamma)$};

\path [->] (3) edge[thick,red] (1);
\path [->] (4) edge[thick,red] (2);
\path [->] (3) edge[thick] (2);

\end{tikzpicture}   
\end{center} 
where the red arrows are elements of the matching $\m$. 
\begin{lemma}[\cite{skoldberg2006}, Lemma 2]
Let $\m$ be a Morse matching on a based complex $K$. Then the map $\phi$ is a splitting homotopy. \end{lemma}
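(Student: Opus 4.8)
The plan is to establish the two identities $\phi^2=0$ and $\phi d\phi=\phi$ by a single induction on the well-founded order $\prec$, following the very recursion that defines $\phi$. It suffices to evaluate both sides on a homogeneous element $x\in K_\alpha$, and only the vertices $\alpha\in\m^-$ require attention: if $\alpha\in\m^0\cup\m^+$ then $\alpha$ is not the target of a matched edge, so by definition $\phi$ vanishes on $K_\alpha$ and both identities are immediate. So fix $\alpha\in\m^-$, let $\beta$ be the unique vertex with $\beta\to\alpha\in\m$, and for each $\gamma\neq\alpha$ with $d_{\gamma,\beta}\neq 0$ abbreviate $z_\gamma = d_{\gamma,\beta}d^{-1}_{\alpha,\beta}(x)\in K_\gamma$. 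These $\gamma$ are exactly the indices occurring in the correction sum defining $\phi(x)$, and each satisfies $\gamma\prec\alpha$, so the inductive hypothesis is available for every $z_\gamma$.

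The computation that drives everything is the expansion of the differential on $d^{-1}_{\alpha,\beta}(x)\in K_\beta$ into its components. Since $d_{\alpha,\beta}d^{-1}_{\alpha,\beta}$ is the identity on $K_\alpha$, separating off the $K_\alpha$-component gives
\begin{equation*}
d\bigl(d^{-1}_{\alpha,\beta}(x)\bigr) = x + \sum_\gamma z_\gamma .
\end{equation*}
For $\phi^2=0$ I would apply $\phi$ to the defining formula $\phi(x)=d^{-1}_{\alpha,\beta}(x)-\sum_\gamma\phi(z_\gamma)$. The first term lies in $K_\beta$; since $\beta$ is incident to the matched edge $\beta\to\alpha$ it is incident to no other edge of $\m$, so $\beta\notin\m^-$ and $\phi$ vanishes on $K_\beta$. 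The remaining summands are $\phi^2(z_\gamma)$, which vanish by the inductive hypothesis, whence $\phi^2(x)=0$.

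For $\phi d\phi=\phi$ I would instead apply $d$ to the same formula and substitute the displayed expansion, obtaining $d\phi(x)=x+\sum_\gamma\bigl(z_\gamma-d\phi(z_\gamma)\bigr)$. Applying $\phi$ and invoking the inductive identity $\phi d\phi(z_\gamma)=\phi(z_\gamma)$ makes the two sums $\sum_\gamma\phi(z_\gamma)$ cancel, leaving $\phi d\phi(x)=\phi(x)$. The base case of a $\prec$-minimal $\alpha$ is the same computation with an empty correction sum.

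The one point that needs care — and the only real obstacle — is matching up index sets: the correction sum built into the definition of $\phi$ must range over exactly the extra components $z_\gamma$ produced by $d(d^{-1}_{\alpha,\beta}(x))$, and every $\gamma$ occurring there must genuinely precede $\alpha$ so that the inductive hypothesis applies. Both facts are structural: the correction sum and the off-diagonal components of $d|_{K_\beta}$ are indexed by the same set $\{\gamma\neq\alpha : d_{\gamma,\beta}\neq 0\}$, and the acyclicity of $G_K^{\m}$ — equivalently, the well-foundedness of $\prec$ — guarantees $\gamma\prec\alpha$ for each of them. Once this is in place, both identities are pure formal manipulation.
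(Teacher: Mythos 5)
Your proof is correct. The paper does not actually prove this lemma---it is quoted as Lemma 2 of \cite{skoldberg2006}---so there is no in-paper argument to diverge from; your simultaneous induction of $\phi^2=0$ and $\phi d\phi=\phi$ along the well-founded order underlying the recursion, using the splitting $d\bigl(d^{-1}_{\alpha,\beta}(x)\bigr)=x+\sum_\gamma z_\gamma$ and the fact that the matching condition forbids $\beta$ from carrying a second matched edge (so $\phi$ vanishes on $K_\beta$), is exactly the standard argument, essentially Sk\"oldberg's own.
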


Define a map $p\colon K \to K$ by $p = 1_K - (\phi d + d \phi)$. A direct computation show that $p$ is a chain map satisfying $p^2 = p$. Therefore, we have a splitting of chain complexes
$$ K = \ker(p) \oplus \im(p).$$
Let $L = \im(p)$. We have the following lemma.


\begin{lemma}
\label{amttransferdiagram}
There exists a transfer diagram
 \begin{center}
\begin{tikzpicture}
\matrix(m)[matrix of math nodes,
row sep=3em, column sep=2.8em,
text height=1.5ex, text depth=0.25ex]
{L &K\\};
\path[->]
(m-1-1) edge [bend left=35] node[yshift=1.5ex] {$i$} (m-1-2)
(m-1-2) edge [bend left=35] node[yshift=-1.5ex] {$p$} (m-1-1)
(m-1-2) edge [loop right, in=35,out=-35,looseness=5, min distance=10mm] node {$\phi$} (m-1-2)
;
\end{tikzpicture}
\end{center}
where $i$ is the inclusion. 
\end{lemma}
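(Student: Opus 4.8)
The plan is to read off all four conditions of a transfer diagram directly from the properties of $p$ that were established just before the statement, namely that $p = 1_K - (\phi d + d\phi)$ is a chain map satisfying $p^2 = p$, giving the splitting $K = \ker(p) \oplus \im(p)$ with $L = \im(p)$. The substantive work — that $\phi$ is a splitting homotopy and hence that $p$ is an idempotent chain map — is already supplied by the preceding lemma and computation, so what remains is to repackage these facts into the transfer-diagram format.

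First I would check that $L$ is genuinely a subcomplex of $K$, so that the inclusion $i\colon L \to K$ is a chain map. Since $p$ commutes with $d$ and $L = \im(p)$, any element of $L$ has the form $p(y)$, and $d(p(y)) = p(d(y)) \in \im(p) = L$; hence $d(L) \subseteq L$. Corestricting $p$ to its image then gives a chain map $p\colon K \to L$, because $p\colon K \to K$ already commutes with $d$. Next I would verify $pi = 1_L$: for $x \in L$ write $x = p(y)$, so that $p(x) = p^2(y) = p(y) = x$, i.e. $p$ restricts to the identity on its image.

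Finally, the homotopy identity is immediate from the definition of $p$. Regarding $i$ as the inclusion, $ip$ is just $p$ viewed as an endomorphism of $K$, so rearranging $p = 1_K - (\phi d + d\phi)$ gives
$$ ip - 1_K = -(\phi d + d\phi),$$
exhibiting $\phi$ (up to the sign dictated by the transfer-diagram convention) as the required contracting homotopy between $ip$ and $1_K$.

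I expect the only real subtleties to be conventional rather than mathematical: keeping straight that $ip = p$ as a self-map of $K$, and matching the sign in the required relation $ip - 1 = d\phi + \phi d$ against the sign built into the definition $p = 1_K - (\phi d + d\phi)$, so that the contracting homotopy is taken to be $\phi$ or $-\phi$ according to one's convention. None of these steps presents a genuine obstacle; the present statement is essentially a bookkeeping consequence of the fact that a splitting homotopy produces an idempotent chain map whose image is the desired retract.
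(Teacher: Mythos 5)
Your proof is correct, but it secures the key identity $pi = 1_L$ by a different mechanism than the paper. You invoke the idempotency $p^2 = p$ (recorded in the text just before the lemma): any $x \in L$ is of the form $p(y)$, so $p(x) = p^2(y) = p(y) = x$. The paper instead first proves $\phi p = 0$ directly from the splitting-homotopy identities, computing $\phi p = \phi - \phi^2 d - \phi d \phi = \phi - 0 - \phi = 0$, hence $\phi i = 0$ on $L = \im(p)$ and also $\phi d i = \phi i d = 0$, and then expands $pi = (1 - \phi d - d\phi)i = i$. Both routes rest on the same preceding facts ($\phi^2 = 0$, $\phi d\phi = \phi$, and $p$ being an idempotent chain map), and both dispose of the homotopy condition identically, by rearranging $p = 1_K - (\phi d + d\phi)$. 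The trade-off is this: your argument is shorter and uses only what the paper's definition of a transfer diagram literally demands, while the paper's detour additionally establishes the side condition $\phi i = 0$ (equivalently, $\phi$ annihilates $L$), one of the standard strong-deformation-retract conditions, which is convenient when the diagram is later fed into Merkulov's construction even though the stated definition does not require it. You are also right---and in fact more explicit than the paper, which simply writes $ip \simeq 1$---that the sign needs attention: $p = 1_K - (\phi d + d\phi)$ gives $ip - 1 = -(d\phi + \phi d)$, so the homotopy appearing in the diagram is $-\phi$ unless one adjusts the convention; this is a defect of bookkeeping shared with the paper, not a gap in your argument.
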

\begin{proof}
We first show that $\phi i = 0$. Indeed, we have
$$\phi p = \phi(1_K - (\phi d + d \phi)) = \phi - \phi^2 d - d \phi d = \phi - 0 - \phi = 0.$$
Since $i$ is a chain map, it follows that
$$\phi d i = \phi i d = 0.$$
Therefore,
$$pi = (1 - \phi d - d \phi)i = 1.$$
By definition of $p$, we have $ip \simeq 1_L$ which finishes the proof. 
\end{proof}

The following theorem is one of the central results of algebraic Morse theory. 

\begin{theorem}[\cite{skoldberg2006}, Theorem 1]
\label{amtmaintheorem}
 Let $\m$ be a Morse matching on a based complex $K$. Then the complexes $K$ and $p(K)$ are homotopy equivalent. Furthermore, the map
\begin{equation}
\label{amtmaintheoremisomorphism}
p\colon \bigoplus_{\alpha \in M_n^0} K_{\alpha} \to L_n
\end{equation}
is an isomorphism of modules for every $n \in \N$. 
\end{theorem}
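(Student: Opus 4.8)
The first assertion is essentially already in hand. The transfer diagram of Lemma~\ref{amttransferdiagram} supplies chain maps $i\colon L \to K$ and $p\colon K \to L$ with $pi = 1_L$ and $ip - 1_K = -(d\phi + \phi d)$, so that $-\phi$ is an explicit chain homotopy between $ip$ and $1_K$. Hence $i$ and $p$ are mutually inverse homotopy equivalences and $K \simeq L = p(K)$. The real content is the second assertion, and the plan is to prove it by exploiting the block decomposition $I_n = \m^0_n \sqcup \m^-_n \sqcup \m^+_n$ together with the splitting homotopy identities $\phi^2 = 0$ and $\phi d \phi = \phi$.

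I would first record three structural facts about $\phi$. \emph{(A)} By its inductive definition, $\phi$ vanishes on $K_\alpha$ for $\alpha \in \m^0 \cup \m^+$ and its image lies in $\bigoplus_{\beta \in \m^+} K_\beta$. \emph{(B)} From $p\phi = (1_K - \phi d - d\phi)\phi = \phi - \phi d\phi - d\phi^2 = 0$ we get that $p$ annihilates $\im \phi$. \emph{(C)} The matching pairs $\m^-_{n-1}$ bijectively with $\m^+_n$, and with respect to this pairing the restriction $\phi\colon \bigoplus_{\alpha \in \m^-_{n-1}} K_\alpha \to \bigoplus_{\beta \in \m^+_n} K_\beta$ has invertible ``diagonal'' entries $d^{-1}_{\alpha,\beta}$ and strictly $\prec$-lower off-diagonal corrections; since $\prec$ is a well-founded partial order, this map is an isomorphism. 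Fact \emph{(C)} is where the work lies and is the main obstacle: one must verify that every correction term $\phi d_{\gamma,\beta} d^{-1}_{\alpha,\beta}$ appearing in the definition of $\phi$ only involves cells $\gamma \prec \alpha$, so that the matrix of $\phi|_{\m^-}$ is triangular over the well-founded order and thus invertible. Combined with \emph{(B)}, Fact \emph{(C)} shows in particular that $\bigoplus_{\beta \in \m^+_n} K_\beta \subseteq \im\phi$, whence $p$ vanishes on all of $\m^+_n$.

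Let $P_0\colon K_n \to \bigoplus_{\alpha \in \m^0_n} K_\alpha$ be the projection onto the critical block and let $\Theta$ denote the map \eqref{amtmaintheoremisomorphism}. For $x$ supported on $\m^0_n$ we have $\phi x = 0$ by \emph{(A)}, hence $p(x) = x - \phi d x$; since $\phi d x \in \bigoplus_{\beta \in \m^+_n} K_\beta$, again by \emph{(A)}, applying $P_0$ gives $P_0 p(x) = x$. Thus $Q := P_0|_{L_n}$ is a left inverse to $\Theta$, which is therefore injective; in particular $Q$ is surjective.

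For surjectivity of $\Theta$ it now suffices to show that $Q$ is also injective, for then $Q$ is an isomorphism and $\Theta = Q^{-1}$. So suppose $y \in L_n$ with $P_0 y = 0$. As $y \in \im p$ and $p^2 = p$ we have $py = y$, that is $d\phi y + \phi d y = 0$; applying $\phi$ on the left and using $\phi^2 = 0$ and $\phi d \phi = \phi$ yields $\phi y = 0$. Writing $y = y^- + y^+$ with no critical component (since $P_0 y = 0$), Fact \emph{(A)} gives $\phi y^+ = 0$, so $\phi y^- = 0$, whence $y^- = 0$ by the injectivity in \emph{(C)}. Then $y = y^+$ lies in $\bigoplus_{\beta \in \m^+_n} K_\beta$, so $y = py = 0$ by \emph{(B)}. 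Hence $Q$ is injective, and the map \eqref{amtmaintheoremisomorphism} is an isomorphism.
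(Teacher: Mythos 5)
Your proof is correct, but there is nothing in the paper to compare it against: the paper imports this statement verbatim from Sk\"oldberg (\cite{skoldberg2006}, Theorem 1) and gives no proof of it; the only ingredients it does establish are exactly the ones you use, namely the splitting-homotopy identities $\phi^2 = 0$, $\phi d \phi = \phi$ and Lemma~\ref{amttransferdiagram}. Measured against that, your argument is a sound, self-contained reconstruction. The homotopy equivalence is indeed immediate, modulo a sign the paper itself glosses over: with $p = 1_K - \phi d - d\phi$ one has $ip - 1_K = d(-\phi) + (-\phi)d$, so $-\phi$ is the homotopy, as you note. For the isomorphism, your facts \emph{(A)}, \emph{(B)}, \emph{(C)} are all true and the deduction from them is valid: \emph{(B)} is the same computation the paper uses to show $\phi i = 0$ inside Lemma~\ref{amttransferdiagram}; \emph{(A)} follows by induction along $\prec$ from the inductive definition of $\phi$; and the triangularity in \emph{(C)} is precisely the paper's own remark that every correction term $\phi d_{\gamma,\beta} d^{-1}_{\alpha,\beta}$ involves only cells strictly below $\alpha$ in the well-founded order. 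Two glosses are needed to make \emph{(C)} airtight. First, ``triangular with invertible diagonal implies invertible'' needs slightly more than well-foundedness: injectivity follows because a nonzero element of a direct sum has finite support, hence a $\prec$-maximal cell in its support, whose diagonal image survives; surjectivity uses nilpotency of the strictly triangular part, and this is where the paper's standing assumption that $G_K$ is finite enters. Second, you actually invoke \emph{(C)} in two adjacent degrees: surjectivity of $\phi$ from the $\m^-_{n-1}$ block onto the $\m^+_n$ block (so that $p$ kills $\m^+_n$), and injectivity of $\phi$ on the $\m^-_n$ block (to conclude $y^- = 0$); both are instances of the same statement, so \emph{(C)} should be stated degree-generically. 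With these glosses the argument is complete. As an aside, your three facts assemble into the Barnes--Lambe picture that the paper points to via \cite{barneslambe1991}: since $\phi d$, $d\phi$ and $p$ are orthogonal idempotents summing to $1_K$, one has $K_n = \im(\phi d)_n \oplus \im(d\phi)_n \oplus L_n$, and \emph{(A)} together with \emph{(C)} identifies $\im(\phi d)_n$ with the $\m^+_n$ block; your final paragraph is in effect verifying that the remaining summands match up under $p$ exactly as the theorem asserts.
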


Note that in general the isomorphism (\ref{amtmaintheoremisomorphism}) is only an isomorphism of graded modules and not of chain complexes. In case the components corresponding to the critical vertices do form a subcomplex we have the following corollary.

\begin{corollary}[\cite{skoldberg2006}, Corollary 2]
\label{skoldbergcor2}
Suppose that $\m$ is a Morse matching on $K$ such that 
$$C = \bigoplus_{\alpha \in\m_n^0} K_{\alpha}$$
is a subcomplex of $K$. Then $K$ and $C$ are homotopy equivalent.  
\end{corollary}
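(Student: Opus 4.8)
The plan is to derive this corollary directly from Theorem \ref{amtmaintheorem} together with the transfer diagram of Lemma \ref{amttransferdiagram}, observing that the extra hypothesis forces the graded isomorphism of \eqref{amtmaintheoremisomorphism} to upgrade to an isomorphism of chain complexes. Recall from Theorem \ref{amtmaintheorem} that the map $p\colon \bigoplus_{\alpha \in \m_n^0} K_\alpha \to L_n$ is an isomorphism of graded modules, where $L = \im(p)$. The only thing separating this from the desired conclusion is that $C = \bigoplus_{\alpha \in \m_n^0} K_\alpha$ carries the restricted differential from $K$, which need not agree with the differential on $L$ transported along $p$; the hypothesis that $C$ is a subcomplex is exactly what is needed to reconcile the two.

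First I would record that $C$ being a subcomplex means $d(C) \subseteq C$, so $(C, d|_C)$ is a genuine chain complex and the inclusion $j\colon C \hookrightarrow K$ is a chain map. Next I would compare $j$ with the restriction to $C$ of the splitting projection $p = 1_K - (\phi d + d\phi)$. The key computation is to show that on $C$ the correction term $\phi d + d\phi$ vanishes, so that $p|_C = j$; this would immediately identify the graded isomorphism $p\colon C \to L$ of Theorem \ref{amtmaintheorem} with a chain map, making it an isomorphism of chain complexes. To see the vanishing, note that for $\alpha \in \m_n^0$ a critical vertex, the matching-based definition of $\phi$ gives $\phi|_{K_\alpha} = 0$ (critical vertices are unmatched, so they fall into the ``otherwise'' branch of the definition of $\phi$), hence $\phi|_C = 0$. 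Then $p|_C = (1_K - d\phi - \phi d)|_C = 1_C - (\phi d)|_C$, and since $d$ maps $C$ into $C$ by hypothesis, $\phi d$ vanishes on $C$ as well because $\phi$ kills $C$; thus $p|_C = j$.

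Having shown $p|_C = j\colon C \to K$ lands in $L$ and is a chain map which is bijective onto $L$ by Theorem \ref{amtmaintheorem}, I would conclude that $C \cong L$ as chain complexes. Finally, Lemma \ref{amttransferdiagram} furnishes a transfer diagram exhibiting $L$ as a strong deformation retract of $K$ via $i$, $p$, and $\phi$; in particular $K$ and $L$ are homotopy equivalent. Composing the chain isomorphism $C \cong L$ with this homotopy equivalence yields the homotopy equivalence $C \simeq K$ claimed in the corollary.

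The main obstacle I anticipate is the verification that $\phi$ genuinely annihilates the critical components $K_\alpha$ for $\alpha \in \m_n^0$, and relatedly that $\phi d$ vanishes on $C$. The first point follows cleanly from the inductive definition of $\phi$ since critical vertices are never the target of a matched edge, but the second relies essentially on the subcomplex hypothesis: without $d(C) \subseteq C$ one cannot guarantee $\phi d$ vanishes on $C$, since $d$ could push a critical basis element into matched (non-critical) components on which $\phi$ is nonzero. Making precise exactly why the hypothesis $d(C)\subseteq C$ is the right condition—and checking there is no subtlety coming from the well-founded ordering $\prec$ used to define $\phi$—is the one place where care is required; everything else reduces to bookkeeping with the already-established splitting identities $p^2 = p$ and $\phi d \phi = \phi$.
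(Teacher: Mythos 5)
Your proof is correct. There is nothing in the paper to compare it against line by line: the corollary is quoted from Sk\"oldberg (\cite{skoldberg2006}, Corollary 2) without proof, so your argument fills in a citation rather than replicating an internal proof. Your two key observations are exactly the right ones: first, $\phi$ vanishes identically on every critical component $K_{\alpha}$, since a critical vertex is never the target of a matched edge and therefore falls into the ``otherwise'' branch of the inductive definition of $\phi$ in both the minimal and non-minimal cases (the well-founded order $\prec$ plays no role here, so the subtlety you flag is harmless); second, the subcomplex hypothesis $d(C) \subseteq C$ then gives $(\phi d + d\phi)|_C = 0$, hence $p|_C = j$. Combined with the graded isomorphism \eqref{amtmaintheoremisomorphism} this identifies $C$ with $L = \im(p)$ as subcomplexes of $K$ (note $L$ is indeed a subcomplex carrying the restricted differential, because $p$ is a chain map), and Lemma \ref{amttransferdiagram} supplies the homotopy equivalence $K \simeq L = C$. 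A slightly different route, closer to the material the paper places immediately after the corollary, would be to invoke Theorem \ref{amtmaintheorem2}: the same two observations show that the Morse differential $\tilde{d} = q(d - d\phi d)j$ collapses to $d|_C$ (since $\phi dj = 0$ and $qdj = dj$), so $(C, d|_C) = (C, \tilde{d}) \simeq (K,d)$. Both derivations hinge on the identical computation $p|_C = j$, so yours is essentially the intended argument.
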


It follows from Theorem \ref{amtmaintheorem} that $C$ admits a differential $\tilde{d}$ such that $(C, \tilde{d})$ is isomorphic to $(L,d)$. Indeed, define
$$q\colon K \to C$$
on $x \in K_{\alpha}$ by
$$q(x) = \begin{cases} x &\mbox{ if } \alpha \in \m_0 \\ 0 &\mbox{ otherwise. } \end{cases}$$
The map $q$ is called the \emph{projection on the critical cells}. Next, $\tilde{d}$ by
$$\tilde{d} = q(d - d\phi d).$$
Then we have the following theorem.

\begin{theorem}[\cite{skoldberg2006}, Theorem 2]
\label{amtmaintheorem2}
The complex $(C, \tilde{d})$ is homotopy equivalent to $(K,d)$. 
\end{theorem}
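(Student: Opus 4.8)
The plan is to reduce the statement to a chain isomorphism between $(C,\tilde d)$ and $(L,d)$, and then invoke the homotopy equivalence $K \simeq L$ already recorded in Lemma \ref{amttransferdiagram}. Concretely, Theorem \ref{amtmaintheorem} provides a module isomorphism $p|_C\colon C \to L$; I would show that, when $C$ carries the differential $\tilde d = q(d - d\phi d)$ and $L$ carries $d$, this map is a chain isomorphism. Composing the chain isomorphism $(C,\tilde d)\cong (L,d)$ with the homotopy equivalence $(L,d)\simeq(K,d)$ coming from the transfer diagram then yields $(C,\tilde d)\simeq(K,d)$, which is the assertion of Theorem \ref{amtmaintheorem2}.

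All of the computations hinge on two structural facts about the splitting homotopy $\phi$ that follow directly from its inductive definition: $\phi$ is nonzero only on the components indexed by $\m^-$, and its image lies in the components indexed by $\m^+$. Since the critical set $\m^0$ is disjoint from both $\m^-$ and $\m^+$, I obtain $\phi|_C = 0$ and $q\phi = 0$. With these in hand, for $x\in C$ one has $p(x) = x - \phi d x - d\phi x = x - \phi d x$, and applying $q$ gives $q p(x) = q(x) - q(\phi d x) = x$. Hence $q|_L$ is a left inverse of the isomorphism $p|_C$, and therefore its two-sided inverse; in particular $p\,q = 1_L$ on $L$.

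It then remains to check that $p|_C$ intertwines the differentials, i.e.\ that $p\,\tilde d = d\,p$ on $C$. Using $d^2=0$ and $\phi|_C=0$, for $x\in C$ I compute $d p(x) = d(x - \phi d x) = (d - d\phi d)(x)$, and since $p$ is a chain map this element equals $p(dx)$ and so lies in $L = \im p$. On the other hand $p\,\tilde d(x) = p\,q\bigl((d-d\phi d)(x)\bigr)$, and because $(d-d\phi d)(x)\in L$ and $p\,q = 1_L$, this equals $(d-d\phi d)(x) = d p(x)$. Thus $p|_C\colon (C,\tilde d)\to (L,d)$ is a chain map; being a module isomorphism, it is a chain isomorphism, and in particular $\tilde d^2 = 0$ comes for free from $d^2=0$ on $L$.

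The main obstacle is essentially bookkeeping: verifying the two support properties $\phi|_C = 0$ and $q\phi = 0$ from the recursive definition of $\phi$, since these are exactly what make both the inverse computation and the intertwining identity collapse. Everything else is formal manipulation using $p^2 = p$, $pd = dp$, and $d^2 = 0$, all of which are available from the preceding discussion. Once the chain isomorphism $(C,\tilde d)\cong(L,d)$ is in place, the homotopy equivalence with $(K,d)$ is immediate from Lemma \ref{amttransferdiagram}.
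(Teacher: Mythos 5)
Your proposal is correct and takes essentially the same route as the paper: the paper (citing Sk\"oldberg) justifies the result precisely by observing that the module isomorphism $p|_C\colon C \to L$ of Theorem \ref{amtmaintheorem} transports the differential so that $(C,\tilde{d})$ is chain isomorphic to $(L,d)$, which combined with the homotopy equivalence $L \simeq K$ from the transfer diagram of Lemma \ref{amttransferdiagram} gives the claim. Your verification of $\phi|_C = 0$ and $q\phi = 0$ from the recursive definition of $\phi$ correctly supplies the bookkeeping that the paper leaves implicit.
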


\begin{definition}
\label{definitionmorsecomplex}
Let $K$ be a complex and $\m$ a Morse matching on $K$. Let $C$ be as in Corollary \ref{skoldbergcor2}. The complex $(C, \tilde{d})$ is called the \emph{Morse complex} of $K$ associated to $\m$. Given a Morse matching $\m$ on $K$, we will write $G^{\m}$ for the Morse complex of $K$ associated to $\m$. 
\end{definition}

\section{$A_{\infty}$-resolutions via algebraic Morse theory}

In this section we will investigate how algebraic Morse theory gives rise to $A_{\infty}$-structures . 
\begin{theorem}
 Let $A$ be a differential graded algebra and let $\m$ be a Morse matching on $A$. Let $\phi$ and $p$ be as before and set $B = \im(p)$. Then $B$ has the structure of an $A_{\infty}$-algebra.
\end{theorem}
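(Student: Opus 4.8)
The plan is to recognize this as an immediate application of Merkulov's theorem (Theorem \ref{merkulovtheorem}), whose whole purpose is to produce an $A_{\infty}$-structure on a subcomplex out of a transfer diagram. Thus the entire task reduces to assembling the data of a transfer diagram from the Morse-theoretic maps already constructed and verifying that the hypotheses of Theorem \ref{merkulovtheorem} are met. Since $A$ is assumed to be a dga and $\phi$, $p$ have already been built from the Morse matching $\m$, almost all the work has been done upstream.

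First I would check that $B = \im(p)$ is genuinely a \emph{subcomplex} of $A$, not merely a graded submodule, as this is the standing hypothesis of Theorem \ref{merkulovtheorem}. Recall from the preceding discussion that $p = 1_A - (\phi d + d\phi)$ is a chain map satisfying $p^2 = p$. Because $p$ commutes with $d$, for any $x = p(a) \in \im(p)$ we have $d x = d p(a) = p(d a) \in \im(p)$, so $d(B) \subseteq B$ and $B = \im(p) = L$ is indeed a subcomplex.

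Next I would invoke Lemma \ref{amttransferdiagram}, which furnishes exactly a transfer diagram of the form \eqref{transferdiagram} with $L = \im(p)$ on the left, $A$ on the right, the inclusion $i$, the projection $p$, and the splitting homotopy $\phi$, satisfying $pi = 1_L$ and $ip - 1 = d\phi + \phi d$. These are precisely the identities demanded in the definition of a transfer diagram, so the data $(A, B, i, p, \phi)$ satisfies all the input requirements of Merkulov's theorem. Applying Theorem \ref{merkulovtheorem} to the dga $(A,d)$ and the subcomplex $B$ equipped with this transfer diagram then produces the maps $\mu_n \colon B^{\otimes n} \to B$ defined by $\mu_1 = d$ and $\mu_n = p \circ \lambda_n \circ i^{\otimes n}$ (with $\lambda_n$ as in \eqref{merkulovlambda}), and the theorem guarantees these satisfy the Stasheff identities \eqref{stasheffidentities}, endowing $B$ with an $A_{\infty}$-algebra structure.

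I do not expect a genuine obstacle here, since the substantive analytic content is already packaged in Lemma \ref{amttransferdiagram} and in Merkulov's theorem. The only point requiring a moment's attention is the verification that $B$ is a subcomplex (handled above via the chain-map property of $p$); once that is in place the argument is a direct citation. If one wished to say more, one could record explicitly that $\mu_2$ is the multiplication of $A$ restricted and corrected by $p$ and $i$, but this is not needed for the bare existence statement.
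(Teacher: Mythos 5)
Your proposal matches the paper's own proof exactly: both cite Lemma \ref{amttransferdiagram} to obtain the transfer diagram and then apply Theorem \ref{merkulovtheorem}. Your additional check that $\im(p)$ is a subcomplex is a fine precaution, though the paper already establishes this just before Lemma \ref{amttransferdiagram} via the splitting $K = \ker(p) \oplus \im(p)$ induced by the chain map $p$.
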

\begin{proof}
By Lemma \ref{amttransferdiagram} there exists a transfer diagram
 \begin{center}
\begin{tikzpicture}
\matrix(m)[matrix of math nodes,
row sep=3em, column sep=2.8em,
text height=1.5ex, text depth=0.25ex]
{B &A\\};
\path[->]
(m-1-1) edge [bend left=35] node[yshift=1.5ex] {$i$} (m-1-2)
(m-1-2) edge [bend left=35] node[yshift=-1.5ex] {$p$} (m-1-1)
(m-1-2) edge [loop right, in=35,out=-35,looseness=5, min distance=10mm] node {$\phi$} (m-1-2)
;
\end{tikzpicture}
\end{center}
where $i$ is the inclusion. Therefore, the result follows from Theorem \ref{merkulovtheorem}.
\end{proof}

Given two Morse matchings $\m_1$ and $\m_2$ on a dg algebra $A$, we want to know how the corresponding $A_{\infty}$-structures $\mu_n^1$ and $\mu_n^2$ are related. The main ingredient is the following theorem.

\begin{theorem}[\cite{kontsevichsoibelman2009}]
\label{kontsevichsoibelman}
 Let $f\colon (V,d_V) \to (W,d_W)$ be a chain homotopy equivalence. Then any $A_{\infty}$ structure on $W$ transfers to an $A_{\infty}$-structure on $V$ such that $f$ extends to an $A_{\infty}$-morphism with $f_1=f$ which is an $A_{\infty}$-homotopy equivalence.
\end{theorem}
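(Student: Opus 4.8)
The plan is to recognize this as the general form of the Homotopy Transfer Theorem (Theorem~\ref{homotopytransfertheorem}), in which one transfers along an arbitrary chain homotopy equivalence rather than along the projection onto homology, or onto a subcomplex as in Merkulov's Theorem~\ref{merkulovtheorem}. I would prove it in two movements: first replace the bare homotopy equivalence by honest transfer data (a contraction), and then feed this data into the explicit sum-over-trees formulas, which simultaneously produce the transferred products, the morphism $f$, and a homotopy inverse.

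First I would upgrade $f$ to a \emph{contraction}: chain maps $f\colon V\to W$ and $g\colon W\to V$ together with a degree $1$ map $h$ satisfying $gf=1$, $fg-1=dh+hd$ and the side conditions $h^2=0$, $hf=0$, $gh=0$, of the same shape as the transfer diagram~\eqref{transferdiagram}. This reduction is the crux. A chain homotopy equivalence only supplies a homotopy inverse, and --- as the example of the zero map from a contractible complex onto $0$ shows --- one cannot in general enforce $gf=1$ with $f$ left unchanged. I expect this to be the main obstacle. Over a field it is overcome by factoring through a common homology model: using $f_1=f$ to identify $H(V)\cong H(W)=:H$, one chooses contractions of $V$ and of $W$ onto $H$ (complexes over a field split), transfers the given $A_{\infty}$-structure from $W$ down to $H$ by Theorem~\ref{homotopytransfertheorem}, and then transports it back up to $V$ along the contraction of $V$ onto $H$. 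That the resulting structure may be chosen with $\mu_1=d_V$ and with $f$ itself as the linear part of the comparison morphism is what must be checked carefully; in the application of the paper, where $f$ arises from the Sköldberg data of two Morse matchings, the requisite contraction is already at hand.

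With a contraction in place the transfer itself is formal. Writing $(\bar{\mu}_n)$ for the $A_{\infty}$-structure on $W$, I would define $\mu'_n\colon V^{\otimes n}\to V$ and $f_n\colon V^{\otimes n}\to W$ by the Kontsevich--Soibelman tree formulas: a signed sum over rooted trees with $n$ leaves, each leaf decorated by $f$, each internal vertex of arity $k$ by $\bar{\mu}_k$, and each internal edge by $h$, with the root edge decorated by $g$ (for $\mu'_n$) or by $h$ (for $f_n$, $n\geq 2$); in particular $\mu'_1=g\,d_W f=d_V$, $\mu'_2=g\,\bar{\mu}_2(f\otimes f)$ and $f_1=f$. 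The cleanest verification of the Stasheff identities~\eqref{stasheffidentities} and the morphism identities~\eqref{stasheffmorphismidentities} is to pass to the tensor coalgebra $T^c(V[1])$: there an $A_{\infty}$-structure is a square-zero coderivation and a morphism is a dg coalgebra map, the higher $\bar{\mu}_{\geq 2}$ constitute a perturbation of the bar differential on $T^c(W[1])$, and the contraction induces one on the tensor coalgebras by the tensor trick. Applying the homological perturbation lemma then yields the perturbed coderivation (the operations $\mu'_n$), the perturbed comparison maps (the morphism $f_n$ and its inverse), and a new homotopy, with all identities holding automatically; convergence is immediate because the perturbation strictly raises the word-length filtration, so the relevant geometric series are locally finite. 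The tree formulas above are precisely the term-by-term expansion of this output.

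Finally, the homotopy-equivalence claim is cheap once the morphism is in place: since $f_1=f$ is a chain homotopy equivalence it is in particular a quasi-isomorphism, so the $A_{\infty}$-morphism $f$ is an $A_{\infty}$-quasi-isomorphism, and over a field every $A_{\infty}$-quasi-isomorphism is an $A_{\infty}$-homotopy equivalence, the inverse being supplied by the same perturbation data. This gives the statement.
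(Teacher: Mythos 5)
First, a point of comparison: the paper gives no proof of Theorem \ref{kontsevichsoibelman} at all --- it is imported wholesale from \cite{kontsevichsoibelman2009} --- so your proposal has to stand on its own merits. Its overall architecture (promote the homotopy equivalence to a contraction, then run the perturbation lemma/tree formulas on the tensor coalgebra) is the standard and correct machinery, and you rightly isolate the crux: a bare homotopy equivalence cannot be promoted to a contraction keeping $f$ fixed. But your workaround does not resolve that crux. Factoring through a common homology model $H$ produces an $A_{\infty}$-structure on $V$ and an $A_{\infty}$-morphism $V \to H \to W$ whose linear part is $\iota_1 \circ p_V$, with $\iota$ the quasi-isomorphism from Theorem \ref{homotopytransfertheorem} and $p_V$ the chosen projection; this map induces $H(f)$ on homology and is therefore chain homotopic to $f$, but it is not equal to $f$, whereas the theorem demands $f_1 = f$ on the nose. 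The clause you defer as ``what must be checked carefully'' is thus the entire content of the theorem. Closing it requires a genuine rectification lemma --- given an $A_{\infty}$-morphism $G$ and a chain homotopy from $G_1$ to $f$, produce an $A_{\infty}$-morphism $F$ with $F_1 = f$ that is $A_{\infty}$-homotopic to $G$ --- and this is not formal: in the coalgebra picture one cannot simply add $[D,\hat{h}]$ to a dg coalgebra morphism, since coalgebra morphisms are a quadratic, not linear, condition; an arity-by-arity inductive construction is needed and is absent. Your closing remark that in the paper's application ``the requisite contraction is already at hand'' is also inaccurate: there $f = p_2i_1$, and the contractions available are of $T$ onto $A_1$ and onto $A_2$, not of either $A_i$ onto the other.

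A clean way to get $f_1 = f$ exactly, staying entirely inside your perturbation framework, is the mapping cylinder. One checks directly that $(v,v',w) \mapsto (f(v)+w,\, v,\, v')$ is an isomorphism of complexes from $\mathrm{Cyl}(f)$ to $W \oplus \mathrm{cone}(1_V)$, so the $A_{\infty}$-structure of $W$ extends to $\mathrm{Cyl}(f)$ with all operations vanishing on the contractible summand, and the strict projection $\pi\colon \mathrm{Cyl}(f) \to W$, $\pi(v,v',w) = f(v)+w$, is an $A_{\infty}$-homotopy equivalence. Since $f$ is a homotopy equivalence, the inclusion $\iota\colon V \to \mathrm{Cyl}(f)$, $v \mapsto (v,0,0)$, is a degreewise split monomorphism whose cokernel $\mathrm{cone}(f)$ is acyclic, hence contractible over a field; the extension is classified by a homotopy class of chain maps $\mathrm{cone}(f) \to V[1]$, which vanishes, so $\iota$ splits as a map of complexes and sits inside honest contraction data of $\mathrm{Cyl}(f)$ onto $V$. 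Applying your perturbation-lemma argument to this contraction yields the transferred structure on $V$ and an $A_{\infty}$-homotopy equivalence $I$ with $I_1 = \iota$, and the composite $\pi \circ I$ then has linear part $\pi\iota = f$, as the theorem requires.
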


We have the following result.

\begin{theorem}
\label{morsematchinghomotopyequivalentainfinity}
 Let $A$ be a dg algebra and $\m_1$ and $\m_2$ two Morse matchings on $A$. Put $A_i = \im p_i$ and denote by $\mu_n^i$ the corresponding $A_{\infty}$-structure. 
 Then there exists an $A_{\infty}$-homotopy equivalence
 $$ f\colon (A_1,\mu_n^1) \to (A_2,\mu_n^2).$$
\end{theorem}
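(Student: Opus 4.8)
The plan is to relate both $(A_1,\mu_n^1)$ and $(A_2,\mu_n^2)$ to the underlying dg algebra $A$, regarded as an $A_{\infty}$-algebra with $\mu_1 = d$, with $\mu_2$ its multiplication, and with all higher operations zero, and then to pass from $A_1$ to $A_2$ through $A$. Concretely, I would first produce, for each $i$, an $A_{\infty}$-homotopy equivalence $I_i\colon (A_i,\mu_n^i) \to A$ whose linear part is the inclusion $i_i$. Granting these, $I_2$ admits an $A_{\infty}$-homotopy inverse $J_2$, and the composite $f = J_2 \circ I_1$ is the desired $A_{\infty}$-homotopy equivalence $A_1 \to A_2$.

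For the existence of $I_i$, recall from Lemma \ref{amttransferdiagram} that each Morse matching $\m_i$ yields a transfer diagram with inclusion $i_i\colon A_i \to A$, projection $p_i$ and splitting homotopy $\phi_i$ satisfying $p_i i_i = 1$ and $i_i p_i - 1 = d\phi_i + \phi_i d$. In particular $i_i$ is a chain homotopy equivalence with homotopy inverse $p_i$. Applying Theorem \ref{kontsevichsoibelman} to $i_i\colon A_i \to A$ transfers the (trivial higher) $A_{\infty}$-structure of $A$ to an $A_{\infty}$-structure on $A_i$ and extends $i_i$ to an $A_{\infty}$-homotopy equivalence $I_i$ with $(I_i)_1 = i_i$. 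The key point is that the structure transferred in this way is exactly the Merkulov structure $\mu_n^i$ of Theorem \ref{merkulovtheorem}: both are obtained by homotopy transfer along the same data $(i_i,p_i,\phi_i)$, and the explicit recursion (\ref{merkulovlambda})--(\ref{merkulovmun}) is precisely the tree formula computing the transferred operations. Hence we obtain $I_i\colon (A_i,\mu_n^i) \to A$.

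With $I_1$ and $I_2$ in hand, I would invoke the standard fact that an $A_{\infty}$-morphism whose linear part is a quasi-isomorphism is invertible up to $A_{\infty}$-homotopy, so that $I_2$ has a homotopy inverse $J_2\colon A \to (A_2,\mu_n^2)$, together with the fact that $A_{\infty}$-homotopy equivalences are closed under composition. Setting $f = J_2 \circ I_1$ then yields an $A_{\infty}$-homotopy equivalence $(A_1,\mu_n^1) \to (A_2,\mu_n^2)$ whose linear part $f_1 = (J_2)_1 \circ i_1$ is a chain homotopy equivalence, completing the argument.

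The main obstacle is the identification in the second step: one must check that the $A_{\infty}$-structure produced by the abstract transfer of Theorem \ref{kontsevichsoibelman} coincides with the Merkulov structure $\mu_n^i$ used to define $A_i$, rather than being merely $A_{\infty}$-isomorphic to it. This is where the explicitness of both constructions matters; the homotopy in Theorem \ref{kontsevichsoibelman} must be taken to be the splitting homotopy $\phi_i$ of the transfer diagram, after which (\ref{merkulovmun}) shows the two structures agree on the nose. The remaining ingredients, namely invertibility of $A_{\infty}$-homotopy equivalences and their closure under composition, are standard and can be cited. Alternatively, to sidestep the identification one may take $I_i$ directly from Merkulov's construction \cite{merkulov1999}, which produces the extending $A_{\infty}$-quasi-isomorphism together with $\mu_n^i$, and then argue identically.
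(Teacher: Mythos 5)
Your argument is correct, but it takes a genuinely different route from the paper's proof. The paper never passes through the ambient dga $A$ viewed as an $A_{\infty}$-algebra: it assembles the two transfer diagrams of Lemma \ref{amttransferdiagram} into a square and checks by hand, using $p_2i_2 = 1_{A_2}$ and $i_1p_1 \simeq 1_A$, that $p_2i_1\colon A_1 \to A_2$ is a chain homotopy equivalence with homotopy inverse $p_1i_2$, and then applies Theorem \ref{kontsevichsoibelman} exactly once, directly to this map, to extend it to an $A_{\infty}$-homotopy equivalence. You instead build equivalences $I_i\colon (A_i,\mu_n^i)\to A$ with linear part $i_i$, invert $I_2$, and compose. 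Comparing the two: the paper's argument is shorter and needs neither the invertibility of $A_{\infty}$-homotopy equivalences nor their closure under composition, but it leaves implicit precisely the identification problem you isolate --- Theorem \ref{kontsevichsoibelman} transfers $\mu_n^2$ along $p_2i_1$ to \emph{some} $A_{\infty}$-structure on $A_1$, and the paper does not argue that this transferred structure agrees (even up to equivalence) with $\mu_n^1$; repairing that point essentially forces the detour through $A$ that you take, since both $\mu_n^1$ and the transferred structure are then compared as models of the dga $A$. Your route makes the identification concrete: the structure $\mu_n^i$ and the morphism $I_i$ arise from transfer along the same data $(i_i,p_i,\phi_i)$, with Merkulov's construction \cite{merkulov1999} (see also \cite{lupalmieriwuzhang2009}) supplying the extending $A_{\infty}$-quasi-isomorphism whose components are built from $\phi_i$ and the maps \eqref{merkulovlambda}; the price is the appeal to the standard facts that $A_{\infty}$-homotopy equivalences compose and admit homotopy inverses. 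Conceptually your proof also makes the theorem more transparent: the two structures are equivalent because both are transferred models of one and the same dga.
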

\begin{proof}
 Consider the following commutative diagram
\begin{center}
\begin{tikzcd}[row sep=large, column sep=large]
A \arrow{r}{1} \arrow[xshift=-0.7ex,swap]{d}{p_1} & A \arrow[xshift=-0.7ex,swap]{d}{p_2} \\
A_1 \arrow[xshift=0.7ex,swap]{u}{i_1} \arrow[yshift=0.7ex]{r}{p_2i_1} & A_2. \arrow[xshift=0.7ex,swap]{u}{i_2} \arrow[yshift=-0.7ex]{l}{p_1i_2}
\end{tikzcd}
\end{center} 
Here, $p_i$ is defined as
$$ p_i = 1 - d \phi_i - \phi_i d $$
where $\phi$ is the splitting homotopy associated to $\m_i$. By definition, we have
$$ (p_2i_1)(p_1i_2) \simeq p_2i_2 = 1_{A_2} $$
since $i_1p_1$ is chain homotopic to the identity. Similarly, $(p_1i_2)(p_2i_1) \simeq 1_{A_1}$. 
Therefore, $p_2i_1$ is a chain homotopy equivalence and so the result follows from Theorem \ref{kontsevichsoibelman}. 
\end{proof}

\begin{remark}
\label{kontsevichsoibelmanremark}
 Note that if $p_2i_1$ is an isomorphism of chain complexes then it extends to an $A_{\infty}$-isomorphism by a similar argument. 
\end{remark}

Before we proceed it will be instructive to look at a fully worked example. For this purpose, let $S = k[x_1,x_2,x_3,x_4]$ and let $I$ be the ideal generated by 
\begin{center}
\begin{tikzpicture}
\matrix(m)[matrix of math nodes,
row sep=3em, column sep=2em,
text height=1.5ex, text depth=0.25ex]
{u_1 = x_1x_2, &u_2=x_2x_3, &u_3=x_2x_4, &u_4=x_1x_4.\\};
\end{tikzpicture}
\end{center}
That is to say, $R = S/I$ is the Stanley-Reisner ring of the following simplicial complex.
\begin{center}
\begin{tikzpicture}[descr/.style={fill=white,inner sep=1.5pt}]

\node[circle, inner sep=0pt, minimum size=5pt, label=above left:$1$, fill=black] (1) at (0,2) {};
\node[circle, inner sep=0pt, minimum size=5pt, label=above right:$2$, fill=black] (2) at (2,2) {};
\node[circle, inner sep=0pt, minimum size=5pt, label=below left:$3$, fill=black] (3) at (0,0) {};
\node[circle, inner sep=0pt, minimum size=5pt, label=below right:$4$, fill=black] (4) at (2,0) {};

\path (1) edge[thick] (3);
\path (3) edge[thick] (4);
\path (1) edge[dashed,thick] (2);
\path (1) edge[dashed,thick] (4);
\path (2) edge[dashed,thick] (3);
\path (2) edge[dashed,thick] (4);

\end{tikzpicture}   
\end{center}
Let $T$ denote the Taylor resolution of $I$. Figure \ref{matchinggraphfourgenerators} shows the graph $G_T$. 
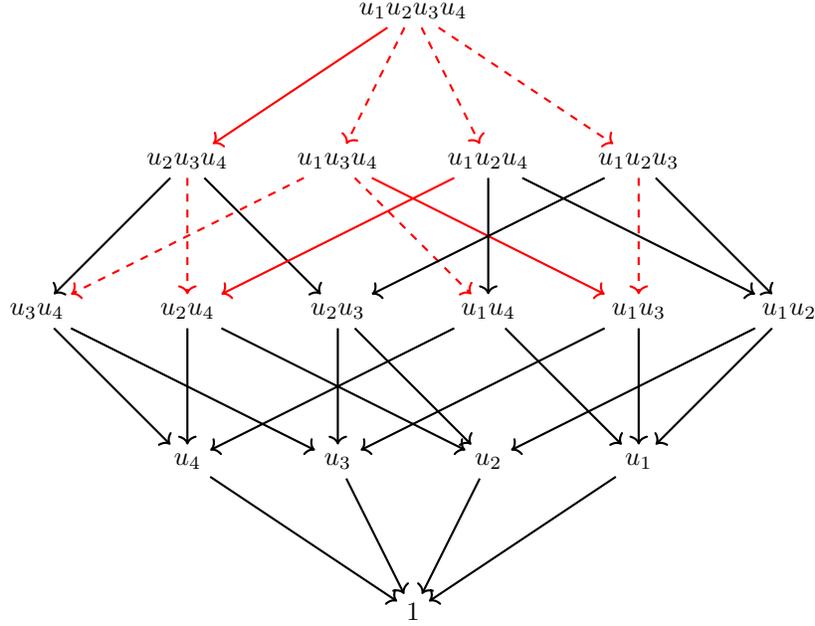
\begin{figure}
\begin{center}
\begin{tikzpicture}[descr/.style={fill=white,inner sep=1.5pt}]

\node (u3u4) at (0,0) {$u_3u_4$};
\node (u2u4) at (2,0) {$u_2u_4$};
\node (u2u3) at (4,0) {$u_2u_3$};
\node (u1u4) at (6,0) {$u_1u_4$};
\node (u1u3) at (8,0) {$u_1u_3$};
\node (u1u2) at (10,0) {$u_1u_2$};

\node (u2u3u4) at (2,2) {$u_2u_3u_4$};
\node (u1u3u4) at (4,2) {$u_1u_3u_4$};
\node (u1u2u4) at (6,2) {$u_1u_2u_4$};
\node (u1u2u3) at (8,2) {$u_1u_2u_3$};

\node (u1u2u3u4) at (5,4) {$u_1u_2u_3u_4$};

\node (u4) at (2,-2) {$u_4$};
\node (u3) at (4,-2) {$u_3$};
\node (u2) at (6,-2) {$u_2$};
\node (u1) at (8,-2) {$u_1$};

\node (1) at (5,-4) {$1$};

\path [->] (u1u2u3u4) edge[thick,red] (u2u3u4);
\path [->] (u1u2u3u4) edge[thick, dashed, red] (u1u3u4);
\path [->] (u1u2u3u4) edge[thick, dashed,red] (u1u2u4);
\path [->] (u1u2u3u4) edge[thick, dashed,red] (u1u2u3);

\path [->] (u2u3u4) edge[thick] (u3u4);
\path [->] (u2u3u4) edge[thick, dashed,red] (u2u4);
\path [->] (u2u3u4) edge[thick] (u2u3);

\path [->] (u1u3u4) edge[thick,dashed,red] (u3u4);
\path [->] (u1u3u4) edge[thick,dashed,red] (u1u4);
\path [->] (u1u3u4) edge[thick,red] (u1u3);

\path [->] (u1u2u4) edge[thick,red] (u2u4);
\path [->] (u1u2u4) edge[thick] (u1u4);
\path [->] (u1u2u4) edge[thick] (u1u2);

\path [->] (u1u2u3) edge[thick] (u2u3);
\path [->] (u1u2u3) edge[thick, dashed, red] (u1u3);
\path [->] (u1u2u3) edge[thick] (u1u2);

\path [->] (u3u4) edge[thick] (u4);
\path [->] (u3u4) edge[thick] (u3);

\path [->] (u2u4) edge[thick] (u4);
\path [->] (u2u4) edge[thick] (u2);

\path [->] (u2u3) edge[thick] (u3);
\path [->] (u2u3) edge[thick] (u2);

\path [->] (u1u4) edge[thick] (u4);
\path [->] (u1u4) edge[thick] (u1);

\path [->] (u1u3) edge[thick] (u3);
\path [->] (u1u3) edge[thick] (u1);

\path [->] (u1u2) edge[thick] (u2);
\path [->] (u1u2) edge[thick] (u1);

\path [->] (u1) edge[thick] (1);
\path [->] (u2) edge[thick] (1);
\path [->] (u3) edge[thick] (1);
\path [->] (u4) edge[thick] (1);

\end{tikzpicture}   
\end{center} 
\caption{The graph $G_T$ corresponding to the ideal $I$.}
\label{matchinggraphfourgenerators}
\end{figure}
In Figure \ref{matchinggraphfourgenerators} the red arrows are those for which $d_{\alpha,\beta}$ is an isomorphism and, hence, which are allowed to be in a Morse matching. The solid red arrows give a specific example of a Morse matching.  
Computing $\phi$ we obtain 
$$ \begin{cases} \phi(u_2u_4) = u_1u_2u_4, \\ \phi(u_1u_3) = u_1u_3u_4, \\ \phi(u_2u_3u_4) = u_1u_2u_3u_4. \end{cases}$$
In all other cases, $\phi$ is zero. For $p$, we compute
\begin{equation*}
p(u_2u_4) = (1-d\phi - \phi d)(u_2u_4)  = u_2u_4 - d(u_1u_2u_4) = x_3u_1u_4 - x_4u_1u_2
\end{equation*}
and
\begin{equation*}
p(u_1u_3) = (1-d\phi - \phi d)(u_1u_3) = u_1u_3 - d(u_1u_3u_4) = u_1u_4 - u_3u_4.
\end{equation*}
Next, we have
\begin{equation*}
\begin{split}
p(u_2u_3u_4) &= (1-d\phi - \phi d)(u_2u_3u_4) \\
&=u_2u_3u_4 - d(u_1u_2u_3u_4) - x_3\phi(u_3u_4) + \phi(u_2u_4) - x_1\phi(u_2u_3)\\
&=u_2u_3u_4 - u_2u_3u_4 + x_3u_1u_3u_4 - u_1u_2u_4 + u_1u_2u_3 + u_1u_2u_4 \\
&=u_1u_2u_3 + x_3u_1u_3u_4.
\end{split}
\end{equation*}
Further, we have $p(u_i) = u_i$ and $p(u_1u_2u_3u_4) = 0$. Also, we have
$$ \begin{cases} p(u_3u_4) = u_3u_4, \\ p(u_2u_3) = u_2u_3, \\ p(u_1u_4) = u_1u_4, \\ p(u_1u_2) = u_1u_2 \end{cases}$$
and
$$ \begin{cases} p(u_1u_3u_4) = 0, \\ p(u_1u_2u_4) = u_1u_2u_4 - \phi(u_2u_4 - x_3u_1u_4 + x_4u_1u_2) =  0. \end{cases}$$
Lastly, we have
\begin{equation*}
\begin{split}
p(u_1u_2u_3) & = (1- d \phi - \phi d)(u_1u_2u_3) \\
&= u_1u_2u_3 - x_1\phi(u_2u_3) + x_3\phi(u_1u_3) - x_4\phi(u_1u_2) \\
&= u_1u_2u_3 + x_3u_1u_3u_4.
\end{split}
\end{equation*}
Consequently, $\im(p)$ is equal to
\begin{center}
\begin{tikzpicture}[descr/.style={fill=white,inner sep=1.5pt}]
        \matrix (m) [
            matrix of math nodes,
            row sep=3em,
            column sep=2em,
            text height=1.5ex, text depth=0.25ex
        ]
        { 0 & S & S^4 & S^4 & S & 0\\
         };

        \path[overlay,->, font=\small, >=latex]
                        (m-1-1) edge (m-1-2) 
                        (m-1-2) edge node[yshift=1.5ex] {$d$} (m-1-3) 
                        (m-1-3) edge node[yshift=1.5ex] {$d$} (m-1-4) 
                        (m-1-4) edge node[yshift=1.5ex] {$d$} (m-1-5) 
                        (m-1-5) edge (m-1-6) 
                        ;
\end{tikzpicture}   
\end{center} 
with basis
\begin{center}
 \begin{tabular}{|c|c|} \hline
  degree &generators \\ \hline
  $0$ &$1$ \\
  $1$ &$u_1$, $u_2$, $u_3$, $u_4$ \\
  $2$ &$u_1u_2$, $u_1u_4$, $u_2u_3$, $u_3u_4$ \\
  $3$ &$u_1u_2u_3 + x_3u_1u_3u_4$ \\ \hline 
 \end{tabular}
\end{center}
Figure \ref{multiplicationtableforfullyworkedexample} depicts the full table for the multiplication $\mu_2=p\lambda_2$ where $y = u_1u_2u_3 + x_3 u_1u_3u_4$. 

\begin{figure}
\begin{center}
  \begin{tabular}{|c|cccc|cccc|} \hline
   &$u_1$ &$u_2$ &$u_3$ &$u_4$ &$u_{12}$ &$u_{14}$ &$u_{23}$ &$u_{34}$ \\ \hline  
   $u_1$ &$0$ & $x_2u_{12}$ & $x_3u_{14}- x_2u_{34}$ & $x_1u_{14}$ & $0$ & $0$ & $x_2y$ & $0$ \\
   $u_2$ & &$0$ & $x_3u_{23}$ & $x_3u_{14} - x_4u_{12}$ & $0$ & $0$ & $0$ & $x_2y$ \\
   $u_3$ & & &$0$ & $x_4u_{34}$ & $x_2y$ & $0$ & $0$ & $0$ \\
   $u_4$ & & & &$0$ & $0$ & $0$ & $x_4y$ & $0$ \\ \hline
   $u_{12}$ & & & & &$0$ &$0$ &$0$ &$0$ \\
   $u_{14}$ & & & & & &$0$ &$0$ &$0$ \\
   $u_{23}$ & & & & & & &$0$ &$0$ \\
   $u_{34}$ & & & & & & & &$0$ \\ \hline
  \end{tabular}
\end{center}
\caption{The multiplication $\mu_2$ for the ideal $I$.}
\label{multiplicationtableforfullyworkedexample}
\end{figure}

We have seen how Morse matchings give rise to $A_{\infty}$-structures on $\im(p)$. Our next goal is to describe $A_{\infty}$-structure on the actual Morse complex. This will allow us to study these structures in terms of the critical vertices of the Morse matching. 

Let $A$ be a differential graded algebra and let $\m$ be a Morse matching on $A$ with corresponding splitting homotopy $\phi$. 
Define
$$ p = 1 - d \phi - \phi d $$
as before. We have seen that there is a transfer diagram
\begin{center}
\begin{tikzpicture}
\matrix(m)[matrix of math nodes,
row sep=3em, column sep=2.8em,
text height=1.5ex, text depth=0.25ex]
{\im(p) &A\\};
\path[->]
(m-1-1) edge [bend left=35] node[yshift=1.5ex] {$i$} (m-1-2)
(m-1-2) edge [bend left=35] node[yshift=-1.5ex] {$p$} (m-1-1)
(m-1-2) edge [loop right, in=35,out=-35,looseness=5, min distance=10mm] node {$\phi$} (m-1-2)
;
\end{tikzpicture}
\end{center}
where $i$ is the inclusion. Consequently, the multiplication $\lambda \colon A^2 \to A$ induces an $A_{\infty}$-structure $\mu_n$ on $\im(p)$ via the Merkulov construction from Theorem \ref{merkulovtheorem}.
Let $A^{\m}$ denote the Morse complex. Then we have a diagram
\begin{center}
\begin{tikzcd}[row sep=large, column sep=large]
A \arrow{r}{1} \arrow[xshift=-0.7ex,swap]{d}{p} & A \arrow[xshift=-0.7ex,swap]{d}{q} \\
\im(p) \arrow[xshift=0.7ex,swap]{u}{i} \arrow[yshift=0.7ex]{r}{qi} & A^{\m} \arrow[xshift=0.7ex,swap]{u}{j} \arrow[yshift=-0.7ex]{l}{pj}
\end{tikzcd}
\end{center} 
where $q$ is the projection on the critical cells. Recall from the proof of Theorem \ref{amtmaintheorem2} that the differential $\tilde{d}$ on $A^{\m}$ can be rewritten as
\begin{equation*}
 \begin{split}
  \tilde{d} &= q(d-d \phi d) j \\
  &= qidpj.
 \end{split}
\end{equation*}
By Theorem 2 of \cite{skoldberg2006}, it follows that $p_1i_2$ is an isomorphism and hence we get the following corollary of Theorem \ref{kontsevichsoibelman}.

\begin{corollary}
\label{ainftystructureonmorsecomplex}
 Let $A$ be a dg algebra and $\m$ a Morse matching. Then the Morse complex $A^{\m}$ has an $A_{\infty}$-algebra structure $\nu_n$ such that there exists an isomorphism of $A_{\infty}$-algebras
 $$ (A^{\m}, \nu_n) \to (\im(p),\mu_n). $$
\end{corollary}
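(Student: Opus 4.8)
The plan is to obtain the corollary as a formal consequence of Theorem \ref{kontsevichsoibelman} together with Remark \ref{kontsevichsoibelmanremark}, feeding in the chain-level isomorphism recorded in the discussion immediately preceding the statement. The one genuine input is that the composite $pj \colon A^{\m} \to \im(p)$ is an \emph{isomorphism} of chain complexes, where $(A^{\m}, \tilde{d})$ carries the Morse differential and $\im(p)$ carries the restriction of $d$. This was already noted above: the module isomorphism of Theorem \ref{amtmaintheorem} intertwines the two differentials by the computation $\tilde{d} = qidpj$, so that $pj$ and $qi$ are mutually inverse chain maps (this is Skoldberg's Theorem \ref{amtmaintheorem2}).

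Concretely, I would proceed as follows. First, recall that the Merkulov construction of Theorem \ref{merkulovtheorem} has already equipped $\im(p)$ with the $A_{\infty}$-structure $\mu_n$, using the transfer diagram between $\im(p)$ and $A$. Next, I would apply Theorem \ref{kontsevichsoibelman} with $V = A^{\m}$, $W = \im(p)$, and $f = pj$: since $pj$ is in particular a chain homotopy equivalence, the structure $\mu_n$ transfers to an $A_{\infty}$-structure $\nu_n$ on $A^{\m}$, and $pj$ extends to an $A_{\infty}$-morphism $(A^{\m}, \nu_n) \to (\im(p), \mu_n)$ whose first component is $f_1 = pj$. Finally, I would invoke Remark \ref{kontsevichsoibelmanremark}: because $f_1 = pj$ is not merely a homotopy equivalence but an honest isomorphism of complexes, the resulting $A_{\infty}$-morphism is an isomorphism of $A_{\infty}$-algebras, which is exactly the map asserted by the corollary.

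The only point that warrants care, and which I expect to be the main obstacle in a fully rigorous write-up, is confirming that $pj$ is a chain isomorphism rather than merely a quasi-isomorphism or homotopy equivalence; this is precisely what upgrades the conclusion to an $A_{\infty}$-\emph{isomorphism}, as opposed to the weaker $A_{\infty}$-homotopy equivalence produced by Theorem \ref{morsematchinghomotopyequivalentainfinity}. That strict invertibility is supplied by Theorem \ref{amtmaintheorem2}, since the projection onto critical cells identifies $(A^{\m}, \tilde{d})$ with $(\im(p), d)$ as chain complexes. Granting this, the corollary follows without any further manipulation of the Stasheff identities, as all of the analytic content is absorbed into Theorem \ref{kontsevichsoibelman} and its remark.
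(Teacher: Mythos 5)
Your proposal is correct and follows essentially the same route as the paper: the paper likewise takes the chain-level isomorphism $pj\colon (A^{\m},\tilde{d}) \to (\im(p),d)$ supplied by Theorem \ref{amtmaintheorem2} (via the identity $\tilde{d}=qidpj$), defines $\nu_n = qi\,\mu_n\,(pj)^{\otimes n}$ --- which is exactly the structure your transfer along $pj$ produces, since $qi$ is the inverse of $pj$ --- and invokes Theorem \ref{kontsevichsoibelman} together with Remark \ref{kontsevichsoibelmanremark} to conclude that the induced $A_{\infty}$-morphism is an isomorphism.
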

\begin{proof}
 Indeed, define $\nu_n$ by
 $$ \nu_n = qi\mu_n(pj)^{\otimes n}. $$
 The required isomorphism is then the one induced by $p_1i_2$ via Theorem \ref{kontsevichsoibelman}.
\end{proof}

\section{Morse theory on the Taylor resolution}
In this section, we apply algebraic Morse theory to the Taylor resolution $T$. First, we discuss one way of constructing Morse matchings on the Taylor resolution which is due to J\"ollenbeck \cite{jollenbeck2006}.

Given a basis element $u = u_{i_1}\cdots u_{i_p} \in T$, define an equivalence relation as follows.
We say that $u_{i_j}$ and $u_{i_k}$ are equivalent if $\gcd(m_{i_j},m_{i_k}) \neq 1$. In that case, we write $u_{i_j} \sim u_{i_k}$.
The transitive closure of $\sim$ gives an equivalence relation on $u$ and we write $\cl(u)$ for the number of equivalence classes. 
An arrow $u \to v$ in $G_T$ is called \emph{admissible} if $m_u = m_v$ and the Taylor differential $d$ maps $u$ to $v$ with nonzero coefficient.

\begin{construction}
\begin{enumerate}
\item Let $u \to v$ be an admissible arrow with $\cl(u)=\cl(v)=1$ such that no proper subsets $u' \subset u$ and $v' \subset v$ define an admissible arrow $u' \to v'$ with $\cl(u)=1$ and $\cl(v')=1$. 
Define
$$ \m_{11} = \{ uw \to vw \mid \text{for each } w \text{ with } \gcd(m_w,m_u) =1= \gcd(m_w,m_v) \}.$$
To simplify notation, write $u \in \m_{11}$ if there exists $v$ such that either $u \to v$ or $v \to u$ is in $\m_{11}$. 
Then $\m_{11}$ is an acyclic matching. Note that if $\gcd(m_u,m_v) = 1$ and $uv \in \m_{11}$ then $u \in \m_{11}$ or $v \in \m_{11}$. 
Consequently, the same procedure can repeated on the Morse complex  $T^{\m_{11}}$. Therefore, we obtain a series of acyclic matchings $\m_{1} = \cup_{i \geq 1} \m_{1i}$. After finitely many steps we obtain a complex such that for each admissible arrow $u \to v$ we have $\cl(u) \geq 1$ and $\cl(v) \geq 2$.
\item Let $u \to v$ be an admissible arrow in $T^{\m_1}$ with $\cl(u) =1$ and $\cl(v)$ such that no proper subsets $u' \subset u$ and $v' \subset v$ define an admissible arrow $u' \to v'$ with $\cl(u)=1$ and $\cl(v')=2$. 
Define
$$ \m_{21} = \{ uw \to vw \mid \text{for each } w \text{ with } \gcd(m_w,m_u) =1= \gcd(m_w,m_v) \}.$$
By the same argument as before, this procedure can be repeated on the Morse complex $T^{\m_{21}}$. Consequently, we obtain a sequence of acyclic matchings $\m_2 = \cup_{i \geq 1} \m_{2i}$.
\item Continuing on we obtain a sequence of matching $\m = \cup_{i \geq 1} \m_i$.
\end{enumerate}
\end{construction}
Each admissible arrow is of the form $uw \to vw$ where $m_u = m_v$, $\gcd(m_u,m_w)=1$, $\cl(u) = 1$ and $\cl(v) \geq 1$. Therefore, $(T^{\m},\tilde{d})$ is the minimal free resolution of $S/I$. The following lemma is immediate from the above construction.

\begin{lemma}
Let $\m$ be constructed as above. Then
 \begin{enumerate}
  \item for all arrows $u \to v$ in $\m$ we have $m_u = m_v$,
  \item for all arrows in the Morse complex $T^{\m}$ we have $m_u \neq m_v$,
  \item $\m_i$ is a sequence of acyclic matchings on the Morse complex $T^{\m_{<i}}$ where $\m_{<i} = \cup_{j < i} \m_j$,
  \item for all arrows $u \to v$ we have $\cl(u) - \cl(v) = i -1$ and $\vert v \vert +1= \vert u \vert$,
 \end{enumerate}
\end{lemma}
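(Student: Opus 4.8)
The plan is to check the four assertions one at a time, reading each directly off the construction; only the class identity in (4) carries genuine content, the rest being bookkeeping.

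First I would dispose of (1) and (3). For (1), recall that every edge of $\m$ has the form $uw \to vw$, obtained from a base admissible arrow $u \to v$ (so $m_u = m_v$ by definition of admissibility) by adjoining a common face $w$ with $\gcd(m_w,m_u)=\gcd(m_w,m_v)=1$. Coprimality gives $m_{uw}=\lcm(m_u,m_w)=m_um_w$ and likewise $m_{vw}=m_vm_w$, so $m_u=m_v$ forces $m_{uw}=m_{vw}$; this persists verbatim as the procedure is iterated on the successive Morse complexes. Assertion (3) is purely organizational: by construction $\m_i=\bigcup_{j\geq 1}\m_{ij}$ is produced by applying the same selection procedure repeatedly to the complex left after the earlier stages, namely $T^{\m_{<i}}$, and each $\m_{ij}$ is acyclic as recorded in the Construction; hence $\m_i$ is a sequence of acyclic matchings on $T^{\m_{<i}}$.

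Next I would treat (4). The degree statement $\vert v\vert+1=\vert u\vert$ holds because each base arrow is an edge of the (Morse-)differential, which lowers homological degree by one, and adjoining the same $w$ to source and target preserves the gap. For the class statement the key observation is additivity of $\cl$ under coprime union: if $\gcd(m_u,m_w)=1$ then no generator of $u$ shares a variable with a generator of $w$, so the $\sim$-graph on $u\sqcup w$ is the disjoint union of the graphs on $u$ and on $w$, whence $\cl(uw)=\cl(u)+\cl(w)$. Consequently $\cl(uw)-\cl(vw)=\cl(u)-\cl(v)$, and it suffices to evaluate the base arrow. Here I would use that the stages stratify the admissible arrows by target class: stage $1$ matches base arrows with $\cl(u)=\cl(v)=1$, and after stage $i-1$ every surviving admissible arrow has $\cl(v)\geq i$, so stage $i$ matches exactly the base arrows with $\cl(u)=1$ and $\cl(v)=i$. (Note that the removed generator $g$ is redundant, $m_g\mid m_v$, hence non-isolated, so deleting it can only split classes; this is consistent with $\cl(v)\geq\cl(u)$.) Therefore the source and target of a stage-$i$ edge differ in class by exactly $i-1$, which is (4).

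Finally, for (2) I would invoke minimality: it has already been established that $(T^{\m},\tilde d)$ is the minimal free resolution of $S/I$. Since the Taylor resolution is $\mathbb{Z}^m$-multigraded and $d$, $\phi$, $q$ are all homogeneous of multidegree $0$, the induced differential $\tilde d=q(d-d\phi d)$ is multigraded; a nonzero component $u\to v$ then has coefficient the monomial $m_u/m_v$ (in particular $m_v\mid m_u$), and minimality forces this coefficient into $(x_1,\ldots,x_m)$, i.e.\ $m_u/m_v\neq 1$, i.e.\ $m_u\neq m_v$. One may alternatively argue combinatorially that every admissible arrow splits as $uw\to vw$ with $u$ the $\sim$-class of the redundant generator, so $\cl(u)=1$ and it is eligible for matching at stage $\cl(v)$; the iteration then exhausts all admissible arrows, leaving none in $T^{\m}$. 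The one step requiring real care is the class computation in (4): both the additivity of $\cl$ and, more delicately, the claim that the stage index equals the class difference, which rests on the inductive fact that after stage $i-1$ no admissible arrow of target class less than $i$ survives, combined with the observation that every admissible arrow possesses a class-$1$ base, so the stages partition the admissible arrows precisely according to $\cl(v)-\cl(u)$.
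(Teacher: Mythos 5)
Your proposal is correct in substance, and it is effectively the only proof on offer: the paper gives no argument for this lemma beyond the sentence ``The following lemma is immediate from the above construction,'' so your unpacking of the construction (coprimality giving $m_{uw}=m_um_w$, additivity of $\cl$ under coprime union, the class-one base of an admissible arrow) is exactly what that remark gestures at, carried out in the same spirit.

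Two caveats are worth recording. First, on item (4): what your argument actually establishes is $\cl(v)-\cl(u)=i-1$ for an arrow $u\to v\in\m_i$ (with $u$ the larger cell; you correctly observe that the removed generator is non-isolated, so $\cl(v)\geq\cl(u)$), whereas the lemma as printed asserts $\cl(u)-\cl(v)=i-1$; these agree only for $i=1$. This is a defect of the paper rather than of your proof --- the same sign clash occurs inside Definition \ref{standardmatching}, whose condition (4) says $\cl(u)-\cl(v)=i-1$ while condition (5b) says $\cl(u)=1$, $\cl(v)=i$ --- and your orientation is the one consistent with the construction; but you should say explicitly that you are proving the corrected sign rather than glossing it as ``differ in class by exactly $i-1$, which is (4).'' Second, your primary argument for item (2), which cites minimality of $(T^{\m},\tilde{d})$ together with multigradedness, is formally admissible only because the paper asserts minimality in the sentence preceding the lemma; logically that assertion is itself justified by the exhaustion of admissible arrows, i.e.\ by item (2), so this route is circular in spirit. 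Your alternative combinatorial argument --- every admissible arrow $uw\to vw$ has a class-one base $u'\to v'$ obtained by taking the $\sim$-class of the removed generator, so the stages, indexed by the class of the base's target, exhaust all admissible arrows, including those created on the intermediate Morse complexes --- is the right one and should be promoted to the main argument, with the minimality observation then falling out as a consequence rather than serving as an input.
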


The following lemma is straightforward but will be used often.

\begin{lemma}
\label{morsematchingmaximaliffresolutionminimal}
Let $\m$ be a Morse matching on the Taylor resolution $T$. Then $\m$ is maximal if and only if $T^{\m}$ is the minimal free resolution.
\end{lemma}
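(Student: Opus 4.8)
The plan is to compare the ranks of $T^{\m}$ with the Betti numbers $\beta_n := \dim_k \tor^S_n(R,k)$. By Theorem \ref{amtmaintheorem2} the Morse complex $T^{\m}$ is homotopy equivalent to $T$, hence is a free resolution of $R$, and by Theorem \ref{amtmaintheorem} its rank in homological degree $n$ is exactly $|\m^0_n|$. Since the minimal free resolution is a direct summand of any free resolution, we have $|\m^0_n| \geq \beta_n$ for all $n$, and $T^{\m}$ is minimal precisely when equality holds for every $n$; equivalently, when $\tilde d$ has no unit entry (by Nakayama, i.e. the defining condition $\tilde d(T^\m)\subseteq(x_1,\ldots,x_m)T^\m$). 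I will feed these two facts into the two implications.

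For the implication ``minimal $\Rightarrow$ maximal'' I argue by contradiction. Suppose $T^{\m}$ is minimal but $\m$ is not maximal, so some edge $\sigma\to\tau$ of $G_T$ can be adjoined to form a strictly larger Morse matching $\m' = \m\cup\{\sigma\to\tau\}$. Then $\sigma$ and $\tau$ are $\m$-critical, so $\m'^0$ is obtained from $\m^0$ by deleting $\sigma$ in degree $|\sigma|$ and $\tau$ in degree $|\tau|=|\sigma|-1$. Applying Theorem \ref{amtmaintheorem2} to $\m'$, the complex $T^{\m'}$ is again a resolution of $R$, yet its rank in degree $|\sigma|$ is $|\m^0_{|\sigma|}|-1 = \beta_{|\sigma|}-1 < \beta_{|\sigma|}$, contradicting $|\m'^0_{|\sigma|}|\geq\beta_{|\sigma|}$. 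Hence $\m$ is maximal.

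For the converse I prove the contrapositive: if $T^{\m}$ is not minimal then $\m$ is not maximal. Non-minimality yields critical cells $\sigma,\tau$ with $\tilde d_{\tau,\sigma}$ a unit. As $T$ is $\Z^m$-graded and $\phi$, $p$, $\tilde d$ all preserve the multidegree, every component $\tilde d_{\tau,\sigma}$ is a scalar multiple of $m_\sigma/m_\tau$, so a unit entry forces $\mu := m_\sigma = m_\tau$. The structural input I would use is that $\Delta_\mu = \{J\mid m_J \text{ divides }\mu\}$ is the full simplex on $B_\mu=\{i\mid m_i \text{ divides }\mu\}$ (every subset of $B_\mu$ has lcm dividing $\mu$), so $S_\mu := \{J\mid m_J=\mu\}$ is an order filter in the Boolean lattice $2^{B_\mu}$. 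Moreover, along any directed path in $G_T^{\m}$ the multidegree is non-increasing: forward edges decrease it by divisibility, while reversed matched edges preserve it since matched edges are invertible (so $m$ is constant across them). Consequently every gradient path contributing to the unit $\tilde d_{\tau,\sigma}$ stays inside $S_\mu$, every directed cycle of $G_T^{\m}$ lies in a single $S_\mu$, and adjoining an invertible edge contained in $S_\mu$ keeps $\m$ a Morse matching if and only if it does so after restricting to $S_\mu$. It then remains to produce, in the based complex $(S_\mu,\partial)$ with its restricted matching $\m_\mu$, an unmatched invertible edge between two critical cells. This is the main obstacle. The point is that $S_\mu$ is an order filter, so whenever two cells of equal cardinality share a facet in $S_\mu$ their join also lies in $S_\mu$ and covers both; unwinding a nonzero entry of the Morse differential of $(S_\mu,\partial)$ into such a diamond produces an invertible edge that may be adjoined to $\m_\mu$ without creating a directed cycle. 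Thus $\m_\mu$, and hence $\m$, is not maximal. The delicate step is precisely this translation of ``nonzero Morse differential'' into a genuinely addable invertible edge, where the filter structure of $S_\mu$ (the presence of joins) is essential; the rest is bookkeeping.
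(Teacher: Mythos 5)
Your first implication (``$T^{\m}$ minimal $\Rightarrow$ $\m$ maximal'') is correct: any enlargement $\m'=\m\cup\{\sigma\to\tau\}$ would, by Theorem \ref{amtmaintheorem2}, give a free resolution $T^{\m'}$ of $R$ whose rank in degree $\vert\sigma\vert$ is strictly below the Betti number $\beta_{\vert\sigma\vert}$, which is impossible; this is exactly the argument hiding behind the word ``Clearly'' in the paper. Your multigraded localization for the converse is also sound: every entry of $\tilde{d}$ between critical cells $\sigma,\tau$ is a scalar multiple of $m_{\sigma}/m_{\tau}$, a unit entry forces $m_{\sigma}=m_{\tau}=\mu$, and since multidegrees only drop along directed paths, both the gradient paths producing that entry and any cycle created by adjoining an edge inside $S_{\mu}$ stay inside $S_{\mu}$.

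However, the converse direction is not proved. The step you defer --- extracting, from a nonzero entry of the Morse differential of $(S_{\mu},\partial)$, an invertible edge between two critical cells that can be adjoined to $\m_{\mu}$ without creating a directed cycle --- is the entire content of this direction, and you offer only the remark that $S_{\mu}$ has joins (note also that the bottom cell of your ``diamond'' need not lie in $S_{\mu}$, since a filter is only closed upwards). This cannot be dismissed as bookkeeping, because the statement you need fails for general based complexes with $\pm 1$ coefficients: take the augmented simplicial chain complex of the dunce hat $D$, choose an optimal acyclic matching with critical cells a vertex $w$, an edge $e$ and a triangle $t$, and match $w$ with the empty face. The resulting matching is maximal --- adjoining $t\to e$ (if it is even an edge) would yield an acyclic matching on $D$ with a single critical cell, forcing $D$ to be collapsible, which it is not --- yet its Morse differential $kt\to ke$ must be an isomorphism since $D$ is acyclic. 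Complementation $J\mapsto V\setminus J$ converts this into an order filter in a Boolean lattice carrying a maximal matching with nonvanishing Morse differential, and filters of this kind do arise as $S_{\mu}$ of monomial ideals (any simplicial complex can be realized as $\{J\mid m_J \mbox{ properly divides } \mu\}$ by the standard facet construction); so the filter structure alone, which is all you invoke, cannot close the argument. For comparison, the paper's own proof takes a blunter route: it asserts that non-minimality of $T^{\m}$ is witnessed by an actual Taylor component $d_{\alpha,\beta}=\pm 1$ between two critical cells, i.e.\ an honest edge of $G_T$, which is then adjoined to $\m$. Your own gradient-path analysis shows why that assertion requires justification (a unit entry of $\tilde{d}$ can a priori come entirely from longer gradient paths, with $\beta\not\subset\alpha$), but your proposal stops one step short even of that: it never produces an edge of $G_T$ to add at all.
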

\begin{proof}
Clearly, if $T^{\m}$ is minimal then $\m$ cannot be extended and hence is maximal. 

For the converse, if $T^{\m}$ is not minimal then there exists some component $d_{\alpha,\beta}$ which does not map into $(x_1,\ldots,x_m)T$. By definition of the Taylor differential this is only possible if $d_{\alpha,\beta} = \pm 1$ and so is invertible. Define $\m'$ by setting
$$\m' = \m \cup \{ \alpha \to \beta \}.$$
Then it is easily seen that $\m'$ is a Morse matching. Therefore, $\m$ is not maximal. 
\end{proof}

\begin{corollary}
\label{minimalfreeresolutionalwaysfrommorsematching}
Let $R$ be a monomial ring and let $F \to R$ be the minimal free resolution of $R$. Then there exists a maximal Morse matching $\m$ on the Taylor resolution $T$ such that $T^{\m} = F$. 
\end{corollary}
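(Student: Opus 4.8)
The plan is to produce a maximal Morse matching on $T$ directly and then read off that its Morse complex is the minimal free resolution, invoking Lemma \ref{morsematchingmaximaliffresolutionminimal} as the key input. The uniqueness of minimal free resolutions then lets me identify the Morse complex with the given $F$.

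First I would establish that a maximal Morse matching on $T$ exists at all. The empty matching $\m = \emptyset$ is a Morse matching: condition~(1) is vacuous, and since $G_T^{\emptyset} = G_T$ has all of its edges pointing from higher to lower homological degree (the Taylor differential lowers degree by one), $G_T$ has no directed cycles, so condition~(2) holds. Because $G_T$ is finite by standing assumption, the collection of Morse matchings, ordered by inclusion, is a finite poset containing $\emptyset$. I would therefore repeatedly adjoin an edge to $\m$ whenever the enlarged set is still a Morse matching; this greedy process terminates after finitely many steps at a Morse matching admitting no proper Morse extension, which is exactly a maximal one (equivalently, one applies Zorn's lemma, trivial here by finiteness).

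Next I would apply Lemma \ref{morsematchingmaximaliffresolutionminimal}: since $\m$ is maximal, the Morse complex $T^{\m}$ is a minimal free resolution of $R$. Here Theorem \ref{amtmaintheorem2} already supplies that $T^{\m}$ is a free resolution of $R$ homotopy equivalent to $T$, and the Lemma is precisely what upgrades the maximality of $\m$ to the minimality of $T^{\m}$. Finally, to obtain the stated equality $T^{\m} = F$, I would invoke the standard fact that the minimal free resolution of a finitely generated graded module over $S$ is unique up to isomorphism of complexes; fixing such an isomorphism $T^{\m} \cong F$ identifies the two, so we may write $T^{\m} = F$.

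The only substantive ingredient is Lemma \ref{morsematchingmaximaliffresolutionminimal}, which is already available; the existence of a maximal matching (by finiteness of $G_T$) and the identification with $F$ (by uniqueness of minimal resolutions) are both formal. Consequently I do not expect a genuine obstacle here — the corollary is essentially a repackaging of the preceding lemma, with the small point to be careful about being that one must first guarantee a maximal matching exists before the Lemma can be applied.
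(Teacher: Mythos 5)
Your proof is correct, but it runs the paper's argument in the opposite direction. The paper does not produce a maximal matching abstractly: it takes the matching $\m$ given by J\"ollenbeck's construction (described at the start of that section), whose Morse complex $T^{\m}$ is already known to be the minimal free resolution, and then applies the easy direction of Lemma \ref{morsematchingmaximaliffresolutionminimal} (minimal $\Rightarrow$ maximal) before invoking uniqueness of minimal resolutions. You instead obtain existence of a maximal Morse matching purely from finiteness of $G_T$ (the empty matching is Morse, since the Taylor differential strictly lowers homological degree and so $G_T$ is acyclic), and then apply the nontrivial direction of the Lemma (maximal $\Rightarrow$ minimal) together with uniqueness. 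Both routes are valid; yours is more self-contained, making the Corollary independent of J\"ollenbeck's construction and of the fact that it terminates in the minimal resolution, whereas the paper's version yields an explicit matching and reuses machinery needed later anyway (e.g.\ for standard matchings). One small caveat in your write-up: stopping the one-edge-at-a-time greedy process only guarantees that no \emph{single} edge can be added; to conclude there is no proper Morse superset of any size you need that subsets of Morse matchings are again Morse matchings, which is true but requires a word about acyclicity, since reversing fewer edges changes the digraph $G_T^{\m}$. Your parenthetical alternative --- by finiteness simply take a Morse matching of maximum cardinality --- sidesteps this entirely and is the cleanest formulation.
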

\begin{proof}
Let $\m$ be the matching obtained from J\"ollenbeck's construction. Then $T^{\m}$ is maximal by Lemma \ref{morsematchingmaximaliffresolutionminimal}. Since the minimal free resolution is unique, we have $T^{\m} = F$.
\end{proof}

\begin{theorem}
Let $\m_1$ and $\m_2$ be maximal Morse matchings on $T$ and let $\mu_n^1$ and $\mu_n^2$ be the corresponding $A_{\infty}$-algebra structures. Then there is an $A_{\infty}$-isomorphism
$$(T^{\m_1},\mu_n^1) \cong (T^{\m_2},\mu_n^2).$$ 
\end{theorem}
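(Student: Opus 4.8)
The plan is to reduce the statement to an application of Remark \ref{kontsevichsoibelmanremark}, whose only hypothesis is that the comparison map $p_2i_1$ be an \emph{isomorphism} of chain complexes rather than merely a homotopy equivalence. First I would record that both Morse complexes are the minimal free resolution: since $\m_1$ and $\m_2$ are maximal, Lemma \ref{morsematchingmaximaliffresolutionminimal} gives $T^{\m_1} = T^{\m_2} = F$, the unique minimal free resolution of $R$. Via the isomorphism $p\colon \bigoplus_{\alpha \in \m^0} K_{\alpha} \to \im(p)$ of Theorem \ref{amtmaintheorem} together with Corollary \ref{ainftystructureonmorsecomplex}, each $\im(p_i)$ is a minimal complex of finitely generated free $S$-modules resolving $R$, and the $A_{\infty}$-structure $\overline{\mu}_n^i$ it carries (the Merkulov structure) is $A_{\infty}$-isomorphic to $(T^{\m_i},\mu_n^i)$.

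Next I would invoke the commutative diagram from the proof of Theorem \ref{morsematchinghomotopyequivalentainfinity}: the map $p_2i_1\colon \im(p_1) \to \im(p_2)$ is a chain homotopy equivalence, with homotopy inverse $p_1i_2$. The key step, and the main obstacle, is to promote this homotopy equivalence to an honest isomorphism of complexes, using minimality. The plan here is the standard Nakayama argument: reduce modulo the maximal ideal $(x_1,\ldots,x_m)$. Because $\im(p_1)$ and $\im(p_2)$ are minimal, their differentials vanish after $-\otimes_S k$, so $\im(p_i)\otimes_S k$ has zero differential. The mapping cone of $p_2i_1$ is a bounded-below acyclic complex of free modules, hence contractible, and therefore remains acyclic after $-\otimes_S k$; equivalently $(p_2i_1)\otimes_S k$ is a quasi-isomorphism between complexes with zero differential, i.e.\ an isomorphism in each degree. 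Nakayama's lemma then lifts this to the statement that each component of $p_2i_1$ is an isomorphism of free $S$-modules, so $p_2i_1$ is an isomorphism of chain complexes.

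Having verified the hypothesis, Remark \ref{kontsevichsoibelmanremark} upgrades $p_2i_1$ to an $A_{\infty}$-isomorphism $(\im(p_1),\overline{\mu}_n^1) \to (\im(p_2),\overline{\mu}_n^2)$. Finally I would assemble the chain of $A_{\infty}$-isomorphisms
\[
 (T^{\m_1},\mu_n^1) \cong (\im(p_1),\overline{\mu}_n^1) \cong (\im(p_2),\overline{\mu}_n^2) \cong (T^{\m_2},\mu_n^2),
\]
where the outer isomorphisms come from Corollary \ref{ainftystructureonmorsecomplex} and the middle one from the previous paragraph; since a composite of $A_{\infty}$-isomorphisms is again an $A_{\infty}$-isomorphism, this yields the desired result. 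The only genuinely nontrivial ingredient is the minimality argument of the second paragraph: everything else is bookkeeping with results already established.
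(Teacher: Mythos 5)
Your proposal is correct and follows essentially the same route as the paper: maximality plus Lemma \ref{morsematchingmaximaliffresolutionminimal} identifies both Morse complexes with the unique minimal free resolution, and the $A_{\infty}$-isomorphism is then obtained from Theorem \ref{morsematchinghomotopyequivalentainfinity} together with Remark \ref{kontsevichsoibelmanremark}, transported across the identifications of Corollary \ref{ainftystructureonmorsecomplex}. The only difference is that the paper leaves implicit the step you spell out in your second paragraph --- the Nakayama argument promoting the chain homotopy equivalence $p_2i_1\colon \im(p_1) \to \im(p_2)$ between minimal complexes to an honest chain isomorphism, which is exactly the hypothesis of Remark \ref{kontsevichsoibelmanremark} --- so your write-up is, if anything, more complete than the paper's.
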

\begin{proof}
Since the minimal free resolution is unique, it follows from \ref{morsematchingmaximaliffresolutionminimal} that $T^{\m_1} = T^{\m_2}$. Therefore, we can apply Theorem \ref{morsematchinghomotopyequivalentainfinity} and Remark \ref{kontsevichsoibelmanremark} and we get an $A_{\infty}$-isomorphism
$$(T^{\m_1},\mu_n^1) \cong (T^{\m_2},\mu_n^2)$$
which finishes the proof.
\end{proof}

\section{The Golod property}
A well-known result by Berglund and J\"ollenbeck is the following.

\begin{theorem}[\cite{berglundjollenbeck2007}, Theorem 5.1]
\label{berglundjollencktheorem}
Let $R=S/I$ be a monomial ring. Then $R$ is Golod if and only if the product on the Koszul homology $\tor^S(R,k)$ vanishes.  
\end{theorem}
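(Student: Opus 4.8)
The plan is to split the biconditional into its two implications, which behave very differently. The forward implication, that Golodness forces the product on $\tor^S(R,k)$ to vanish, I would deduce immediately from Golod's characterization in Theorem \ref{golodiffmasseytrivial}: if $R$ is Golod then \emph{every} Massey product on the Koszul homology is trivial, and the length-$2$ Massey product $\langle \alpha_1, \alpha_2 \rangle$ is by definition exactly the product $\alpha_1\alpha_2$. Since a length-$2$ Massey product is a single element, triviality means it equals zero, so the product vanishes identically. This direction needs no further input.

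The converse, that vanishing of the product implies Golodness, is where all the difficulty lies, and I would attempt it through the $A_\infty$-machinery already assembled. By Corollary \ref{munminimalimpliesgolod}, $R$ is Golod precisely when some $A_\infty$-structure $\mu_n$ on the minimal free resolution $F$ (quasi-isomorphic to $K_R$) is minimal in every arity. The product being trivial translates into minimality of $\mu_2$, so the question reduces to whether minimality of $\mu_2$ propagates, via the Stasheff identities \eqref{stasheffidentities}, to minimality of every higher $\mu_n$.

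The main obstacle is that this propagation fails, and hence the full biconditional is \emph{false} as stated. As recorded in the introduction, the original claim of \cite{berglundjollenbeck2007} was refuted by the counterexample of \cite{katthan2015}: there is a monomial ring whose product on $\tor^S(R,k)$ vanishes yet which supports a nontrivial higher Massey product, so that $R$ is not Golod. In $A_\infty$-terms, minimality of $\mu_2$ does not by itself force minimality of $\mu_3$; the Stasheff identities leave room for a non-minimal $\mu_3$ even when $\mu_2$ is minimal. Consequently no correct proof of the converse can be given at this level of generality, and only the forward implication survives unconditionally. This is precisely the gap that motivates the restriction to simplicially resolvable rings, where the explicit description of the higher products furnished by Lemma \ref{ainfinitystructuremorsecomplexsimpliciallyresolvable} supplies exactly the control needed to recover the converse and thereby establish Main Theorem \ref{maintheorem1}.
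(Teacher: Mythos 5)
Your assessment is correct and matches the paper's own treatment of this statement: the paper never proves Theorem \ref{berglundjollencktheorem} at all, but quotes it from \cite{berglundjollenbeck2007} precisely in order to refute it, immediately presenting Katth\"an's counterexample (an ideal in $k[x_1,x_2,y_1,y_2,z]$ whose Tor-algebra has trivial product but a nontrivial Massey product $\langle m_1,m_2,m_3\rangle$), which is exactly the counterexample you invoke. Your forward implication via Theorem \ref{golodiffmasseytrivial} is sound, and your observation that the converse can only be recovered under an additional hypothesis is precisely what the paper establishes for simplicially resolvable rings in Theorem \ref{simpliciallyresolvableimpliesgolodiffproducttrivial}.
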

However, in \cite{katthan2015} Katth\"an presented the following counterexample to Theorem \ref{berglundjollencktheorem}.

\begin{example}[\cite{katthan2015}, Theorem 3.1]
Let $k$ be a field and let $S=k[x_1,x_2,y_1,y_2,z]$. Let $I$ be the ideal generated by
\begin{center}
\begin{tikzpicture}
\matrix(m)[matrix of math nodes,
row sep=0.25em, column sep=4em,
text height=1.5ex, text depth=0.25ex]
{m_1 = x_1x_2^2 & m_4 = x_1x_2y_1y_2 & m_7 = x_1y_1z \\
m_2 = y_1y_2^2 & m_5 = y_2^2z^2 & m_8 = x_2^2y_2^2z \\
m_3 = z^3 & m_6 = x_2^2z^2 \\};
\end{tikzpicture}
\end{center}
Then the product on $\tor^S(R,k)$ is trivial but $R$ is not Golod. More precisely, the Massey product $\langle m_1, m_2, m_3 \rangle$ is non-trivial. 
\end{example}

A natural question to ask, then, is under what additional assumptions Theorem \ref{berglundjollencktheorem} does hold. The main purpose of this section is to provide an answer to this question. Recall from Definition \ref{definitionsimplicialresolution} that a resolution $F$ is called simplicial if $F=F_{\Delta}$ for some simplicial complex $\Delta$. 

\begin{definition}
\label{definitionsimpliciallyresolvable}
 A monomial ring $R$ is called \emph{simplicially resolvable} if the minimal free resolution of $R$ is a simplicial resolution. 
\end{definition}

\begin{lemma}
\label{simpliciallyresolvableiffcriticalcellssimplicialcomplex}
Let $R$ be a monomial ring. Then the following are equivalent.
\begin{enumerate}
\item $R$ is simplicially resolvable
\item The Taylor resolution $T$ of $R$ admits a maximal Morse matching $\m$ such that the set $\m_0$ of $\m$-critical cells forms a simplicial complex. 
\end{enumerate}
\end{lemma}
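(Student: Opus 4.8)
The plan is to prove the two implications separately, in both cases passing to a fixed multidegree and invoking the Bayer–Peeva–Sturmfels criterion (Theorem \ref{bayerpeevasturmfelstheorem}). Two preliminary observations will be used throughout. First, a component $d_{\beta,\alpha}$ of the Taylor differential is an isomorphism of free modules exactly when the coefficient $m_\alpha/m_\beta$ is a unit, i.e. $m_\alpha=m_\beta$; hence every arrow of any Morse matching on $T$ preserves multidegree. Second, for a fixed multidegree $\mu$, the $\mu$-graded strand of $T$ (written in the normalized basis $(\mu/m_J)u_J$ for $m_J\mid\mu$) is, up to a homological shift, the reduced simplicial chain complex of the subcomplex $(\Delta^r)_\mu=\{J:m_J\mid\mu\}$ of the full simplex, with $\pm1$ boundary maps. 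Since $T$ itself is a resolution, Theorem \ref{bayerpeevasturmfelstheorem} applied to $\Delta^r$ tells me that $(\Delta^r)_\mu$ is acyclic or empty for every $\mu$.

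For (2)$\Rightarrow$(1) I start from a maximal matching $\m$ with $\m_0=\Delta$ a simplicial complex. By Lemma \ref{morsematchingmaximaliffresolutionminimal} the complex $T^\m$ is the minimal free resolution, so its ranks are the Betti numbers of $R$ and its generators are indexed by $\Delta$. Fixing $\mu$, I would restrict $\m$ to the strand $(\Delta^r)_\mu$: since matched arrows preserve multidegree, a strand cell is matched only to another strand cell, so $\m$ restricts to a Morse matching on $\tilde C_*((\Delta^r)_\mu)$ (acyclicity passing to the subgraph, invertibility from $m_J=m_{J'}$) whose critical cells are precisely $\Delta_\mu=\{J\in\Delta:m_J\mid\mu\}$. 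Because $\Delta$ is a simplicial complex and divisibility is transitive, $\Delta_\mu$ is closed under subfaces, so its span is a subcomplex of $\tilde C_*((\Delta^r)_\mu)$; Corollary \ref{skoldbergcor2} then yields a homotopy equivalence $\tilde C_*(\Delta_\mu)\simeq\tilde C_*((\Delta^r)_\mu)$, and hence $\Delta_\mu$ is acyclic or empty. By Theorem \ref{bayerpeevasturmfelstheorem}, $F_\Delta$ is a resolution, and since its ranks coincide with those of $T^\m$ it is the minimal one; thus $R$ is simplicially resolvable.

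For (1)$\Rightarrow$(2) I write the minimal resolution as $F_\Delta$ and must \emph{produce} a matching on $T$ whose critical set is exactly $\Delta$. Since subfaces of faces are faces, $F_\Delta$ is a subcomplex of $T=F_{\Delta^r}$ (the two differentials agree), and the inclusion is a chain map over the identity of $R$, hence a quasi-isomorphism of resolutions; therefore the quotient $Q=T/F_\Delta$, a based complex of free modules on the non-faces $\{u_J:J\notin\Delta\}$, is acyclic. I claim $Q$ admits a \emph{perfect} matching. Choosing a maximal matching $\m_Q$ on $Q$, the reasoning of Lemma \ref{morsematchingmaximaliffresolutionminimal} (which only uses that differential components are monomials) shows the Morse complex $Q^{\m_Q}$ is minimal; being homotopy equivalent to the acyclic $Q$ it is itself acyclic, and a minimal acyclic bounded complex of finitely generated free modules is zero (tensor with $k$), so $\m_Q$ leaves no critical cell. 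Its arrows, being arrows of $G_T$ between non-faces, then define a partial matching $\m$ on $T$ with critical set $\Delta$.

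It remains to check $\m$ is an acyclic (hence, by rank count, maximal) Morse matching. Invertibility holds since matched arrows preserve multidegree. For acyclicity I would argue that a face $J\in\Delta$ is never matched, so in $G_T^{\m}$ every edge leaving $J$ is an unreversed Taylor edge to a proper subface, again in $\Delta$; thus any directed walk meeting $\Delta$ stays in $\Delta$ with strictly decreasing cardinality and cannot close up, so every directed cycle lies among the non-faces, where it is excluded by acyclicity of $\m_Q$. Finally $T^\m$ has basis $\Delta$, so its ranks are the Betti numbers and it is minimal, whence $\m$ is maximal by Lemma \ref{morsematchingmaximaliffresolutionminimal}, and $\m_0=\Delta$ is a simplicial complex as required. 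I expect the main obstacle to be exactly this construction in (1)$\Rightarrow$(2): recognizing $Q$ as acyclic, proving that a maximal matching on an acyclic based complex is perfect, and then verifying that re-gluing it into $T$ while leaving all of $\Delta$ unmatched introduces no directed cycles. The reverse implication is comparatively routine, since restricting to a single multidegree reduces it to the purely simplicial statement supplied by Corollary \ref{skoldbergcor2}.
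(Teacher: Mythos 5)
Your proposal is correct, but it takes a genuinely different route from the paper on both implications. For (2)$\Rightarrow$(1) the paper argues directly: since the critical cells $\m_0$ are closed under passing to subfaces, they span a subcomplex of $T$ on which the Morse differential coincides with the simplicial one, so $T^{\m}=F_{\m_0}$ is a simplicial resolution, and maximality gives minimality via Lemma \ref{morsematchingmaximaliffresolutionminimal} --- two lines, no multigraded strands. Your detour through the multidegree strands, Corollary \ref{skoldbergcor2}, the Bayer--Peeva--Sturmfels criterion (Theorem \ref{bayerpeevasturmfelstheorem}) and a rank count is heavier, but it buys you something the paper leaves implicit: you never have to identify the Morse differential $\tilde d$ with the simplicial differential of $F_{\m_0}$. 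For (1)$\Rightarrow$(2) the paper invokes the classical splitting of any free resolution as (minimal resolution) $\oplus$ (trivial complex) and simply declares the matching to consist of the arrows $u_\alpha\to v_\alpha$ of the trivial summand; this silently assumes that the trivial summand can be taken to be spanned by Taylor basis vectors (in general such a splitting involves a change of basis, so the $u_\alpha\to v_\alpha$ need not be edges of $G_T$) and it does not verify acyclicity of the resulting matching. Your construction --- passing to the quotient $Q=T/F_\Delta$, showing that a maximal Morse matching on $Q$ must be perfect because its Morse complex is simultaneously minimal and acyclic, and then checking that transplanting this matching into $G_T$ creates no directed cycles since any path entering $\Delta$ stays in $\Delta$ with strictly decreasing cardinality --- fills exactly the gaps the paper's two-line argument glosses over. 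In short: your proof is longer but more rigorous; the paper's is shorter but leans on unproved compatibility of the splitting with the Taylor basis.
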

\begin{proof}
If the second statement holds then $F_{\m_0}=T^{\m}$ is a simplicial resolution of $R$. Since $\m$ is maximal, it follows by Lemma \ref{morsematchingmaximaliffresolutionminimal} that $F_{\m_0}$ is minimal.

Conversely, assume that $R$ is simplicially resolvable. Let $F=F_{\Delta}$ denote the minimal free resolution and let $T$ denote the Taylor resolution. Then there exists a trivial complex $G = \oplus_{\alpha \in I} G_{\alpha}$ where
\begin{center}
\begin{tikzpicture}[descr/.style={fill=white,inner sep=1.5pt}]
\matrix (m) [
	matrix of math nodes,
    row sep=3em,
    column sep=2em,
    text height=1.5ex, text depth=0.25ex
    ]
    { G_{\alpha}: & 0 & Su_{\alpha} & Sv_{\alpha} & 0 \\
    };

    \path[overlay,->, font=\small, >=latex]
    	(m-1-2) edge (m-1-3)
        (m-1-3) edge node[yshift=1.5ex] {$d_{\alpha}$} (m-1-4)
        (m-1-4) edge (m-1-5);
\end{tikzpicture}   
\end{center} 
with $d_{\alpha}(u_{\alpha}) = v_{\alpha}$. Define 
$$\m = \{ d_{\alpha} \colon u_{\alpha} \to v_{\alpha} \mid \alpha \in I\}$$
then $\m$ is Morse matching and $F_{\Delta} = T^{\m_0}$. Therefore, $\m_0 = \Delta$ and hence $\m_0$ is a simplicial complex.  
\end{proof}

The following lemma is straightforward but crucial in what follows. 

\begin{lemma}
\label{ainfinitystructuremorsecomplexsimpliciallyresolvable}
Let $R$ be simplicially resolvable and let $\m$ be a Morse matching on the Taylor resolution $T$ of $R$ as in Lemma \ref{simpliciallyresolvableiffcriticalcellssimplicialcomplex}. Let $\nu_n$ denote the corresponding $A_{\infty}$-structure on the Morse complex $T^{\m}$. Then 
$$\nu_n = q \circ \lambda_n \circ j^{\otimes n}$$
where $q\colon T \to T^{\m}$ is the projection on the critical cells, $j\colon T^{\m} \to T$ is the inclusion and $\lambda_n\colon T^{\otimes n} \to T$ is the auxiliary map \eqref{merkulovlambda} from the Merkulov construction. 
\end{lemma}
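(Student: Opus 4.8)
We must show that the transferred $A_\infty$-structure $\nu_n$ on the Morse complex $T^{\m}$ is given by the simple formula $\nu_n = q \circ \lambda_n \circ j^{\otimes n}$, where $\lambda_n$ is the Merkulov auxiliary map on the Taylor resolution, $q$ is projection onto critical cells, and $j$ is the inclusion.

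**The approach.** Recall from Corollary~\ref{ainftystructureonmorsecomplex} that the transferred structure on $T^{\m}$ is defined by $\nu_n = q i \mu_n (pj)^{\otimes n}$, where $\mu_n = p \lambda_n i^{\otimes n}$ is the Merkulov structure on $\im(p)$ (here I am using $i$ for the inclusion $\im(p) \hookrightarrow T$ and $p$ for the projection $1 - d\phi - \phi d$). So the task is essentially to simplify this composite. The plan is to exploit the special feature of the simplicially resolvable case, namely that by Lemma~\ref{simpliciallyresolvableiffcriticalcellssimplicialcomplex} the critical cells $\m_0$ form a simplicial complex, so $C = \bigoplus_{\alpha \in \m_0} T_\alpha$ is literally a subcomplex of $T$ and $j \colon T^{\m} = C \hookrightarrow T$ is the honest inclusion.

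**Key steps.** First I would record the identity $pj = ij$, which holds because on the critical cells $C$ (a subcomplex) the homotopy $\phi$ vanishes on the relevant pieces and $p$ restricted to $C$ lands in $\im(p)$ agreeing with $i$ composed with the projection $qi$; more precisely, I want to show $i(qi)^{-1}$ and $j$ induce the same inclusion so that $p j = i \circ (p j)$ can be folded in. The cleanest route is to observe that $qi\colon \im(p)\to T^{\m}$ is an isomorphism (this is Theorem~2 of \cite{skoldberg2006}, already used to deduce Corollary~\ref{ainftystructureonmorsecomplex}) with inverse $pj$, so $j = i\,(qi)^{-1}(qi)\, \cdots$—the point being that $i \circ pj = p \circ j$ and, since $j$ maps into the subcomplex $C$ on which $\phi$ acts trivially, $p\circ j = j$ as maps into $T$. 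Substituting $i(pj) = j$ into $\nu_n = qi\,\mu_n\,(pj)^{\otimes n} = qi\, p\lambda_n i^{\otimes n}\,(pj)^{\otimes n}$, the inner factors $i^{\otimes n}(pj)^{\otimes n} = (i\,pj)^{\otimes n} = j^{\otimes n}$ collapse, and the outer $qi\,p = q\,(ip) = q$ collapses because $q i p = q$ (as $q$ annihilates the non-critical cells and $ip$ differs from the identity only by a boundary term killed by $q$). This yields $\nu_n = q\,\lambda_n\,j^{\otimes n}$.

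**Main obstacle.** The delicate point is justifying the two collapses $i \circ pj = j$ and $q\, i\, p = q$ rigorously, since they rely on the precise interaction between $\phi$, $p$, $q$, and the subcomplex structure of $\m_0$. The first requires that $\phi$ vanishes on $C$ in the appropriate sense (equivalently $pj = ij$ lands correctly), which is exactly where the hypothesis that $\m_0$ is a \emph{simplicial complex}—not merely that the critical cells exist—is used: it guarantees $C$ is a genuine subcomplex so that the splitting $T = \ker p \oplus \im p$ restricts compatibly. I would verify these identities by a direct component computation using the explicit inductive definition of $\phi$ and the fact that for $\alpha \in \m_0$ no arrow of $\m$ is incident to $\alpha$, so $\phi$ contributes nothing along critical cells. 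Once these two identities are in hand, the rest is formal substitution into the Corollary~\ref{ainftystructureonmorsecomplex} formula.
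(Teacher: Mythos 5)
Your proposal follows the same route as the paper's own proof: start from the formula $\nu_n = qi\,\mu_n\,(pj)^{\otimes n}$ of Corollary \ref{ainftystructureonmorsecomplex} and perform two collapses. The first collapse, $i\circ pj = j$, you justify exactly as the paper does (downward closure of $\m_0$ gives $d(T^{\m})\subseteq T^{\m}$, and $\phi$ vanishes on critical cells, so $pj(x)=x$); that part is fine. The genuine gap is your second collapse, $qip=q$. Writing $qip = q - q\,d\phi - q\,\phi d$, the term $q\,\phi d$ does vanish, since the image of $\phi$ lies in the span of the matched cells $\m^+$, which $q$ kills; but $q\,d\phi$ need \emph{not} vanish, because the Taylor differential of a cell of $\m^+$ can have critical facets. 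Your phrase ``$ip$ differs from the identity only by a boundary term killed by $q$'' conflates $\phi d$ (killed by $q$) with $d\phi$ (not killed). Concretely, take $S=k[x,y]$ and $I=(x^2,xy,y^2)$, which is strongly generic and hence simplicially resolvable; the matching $\m=\{u_{123}\to u_{13}\}$ is maximal (its Morse complex is the minimal resolution, cf.\ Lemma \ref{morsematchingmaximaliffresolutionminimal}) and its critical cells $\{\emptyset,1,2,3,12,23\}$ form a simplicial complex, so it satisfies the hypotheses of Lemma \ref{simpliciallyresolvableiffcriticalcellssimplicialcomplex}. Here $\phi(u_{13})=-u_{123}$, so $q\,d\phi(u_{13}) = -(x u_{23}+y u_{12})\neq 0$, and $qip\neq q$.

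What your two collapses (and the paper's computation) actually yield is $\nu_n = p\circ\lambda_n\circ j^{\otimes n}$: once $pj$ and $qi$ are identified with the identity of $T^{\m}=\im(p)$, the outer factor is $qi\circ p = p$, not $q$. In the example above the discrepancy is real: $p\lambda_2(u_1\otimes u_3) = p(u_{13}) = xu_{23}+yu_{12}$, whereas $q\lambda_2(u_1\otimes u_3) = q(u_{13}) = 0$. Moreover, $q\lambda_2 j^{\otimes 2}$ cannot be the transferred product for \emph{any} choice of transfer data, since it violates the Leibniz (Stasheff $n=2$) identity against $\tilde d$: one has $\tilde d\,q\lambda_2(u_1\otimes u_3)=0$, while $q\lambda_2(\tilde d u_1\otimes u_3)$ and $q\lambda_2(u_1\otimes \tilde d u_3)$ equal $x^2u_3$ and $y^2u_1$ up to sign, which cannot cancel. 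So the step you flagged as ``delicate'' is not merely delicate; as stated it is false. In fairness to you, the paper's own proof makes the identical unjustified substitution of $q$ for $p$ in its final line, so you have reproduced its argument faithfully and correctly isolated its weak point --- but a complete proof would have to either restate the lemma with $p\circ\lambda_n\circ j^{\otimes n}$, or establish $q\,d\phi\,\lambda_n j^{\otimes n}=0$ under additional hypotheses, and the example shows the latter already fails for $n=2$.
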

\begin{proof}
Recall from Corollary \ref{ainftystructureonmorsecomplex} that $\nu_n$ is given by
$$\nu_n = qi \mu_n pj = qip \lambda_n (ipj)^{\otimes n}.$$
Let $d$ denote the differential of the Taylor resolution $T$ and let $\tilde{d}$ denote the differential of the Morse complex $\tilde{d}$ from Definition \ref{definitionmorsecomplex}. That is to say, $\tilde{d} = q(d - d\phi d)j$. Since $R$ is simplicially resolvable, it follows by Lemma \ref{simpliciallyresolvableiffcriticalcellssimplicialcomplex} the critical cells $\m_0$ form a simplicial complex. Therefore, if $u \in T$ is a critical then all $v \subseteq u$ are critical as well. Consequently, $d(T^{\m}) \subseteq T^{\m}$. Now, $p=1-d \phi - \phi d$ and $\phi(T^{\m}) = 0$ by definition. Hence, for $u \in T^{\m}$ we have
$$pj(x) = (1-d \phi - \phi d)jx = x - d \phi x - \phi d x = x.$$
Consequently, the isomorphism $pj\colon T^{\m} \to \im(p)$ is just the identity. Hence so is its inverse $qi$. Therefore, 
$$\nu_n = q \circ \lambda_n \circ j^{\otimes n}$$
as required. 
\end{proof}

Next, we investigate how the maps $\lambda_n$ behave with respect to multidegrees. 

\begin{lemma}
 For all $n$ and all $v_1,\ldots,v_n \in T$, $\lcm(\lambda_n(v_1,\ldots,v_n))$ divides $\lcm(v_1,\ldots, v_n)$. 
\end{lemma}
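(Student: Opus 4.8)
The plan is to reduce the statement to three elementary divisibility estimates — one for the Taylor differential $d$, one for the splitting homotopy $\phi$, and one for the product $\lambda_2$ — and then combine them by induction on $n$ using the Merkulov recursion \eqref{merkulovlambda}. Throughout, for a homogeneous element $v = \sum_J c_J u_J \in T$ with $c_J \in S$, I read $\lcm(v)$ as the least common multiple of the multidegrees $m_J$ of the Taylor generators occurring in $v$, and I set $\lcm(v_1,\ldots,v_n) = \lcm(\lcm(v_1),\ldots,\lcm(v_n))$. Since $d$, $\phi$ and the multiplication are all $S$-(multi)linear, the monomial coefficients $c_J$ play no role, and I may assume each $v_i$ is a single Taylor generator $u_{J_i}$; moreover, passing to a sum can only shrink the support after cancellation, so it suffices to bound the support-lcm of each individual term that appears.

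The first estimate is that $\lcm(dv)$ divides $\lcm(v)$: this is immediate from the differential formula, since every generator $u_{J^i}$ occurring in $d u_J$ satisfies $m_{J^i} \mid m_J$. The second, and main, estimate is that $\lcm(\phi v)$ divides $\lcm(v)$. The key observation here is that every matched edge preserves multidegree: if $\alpha \to \beta \in \m$ then $d_{\beta,\alpha}$ is an isomorphism of free rank-one modules, which for the Taylor differential forces the monomial $m_\alpha/m_\beta$ to be a unit, i.e.\ equal to $1$, so that $m_\alpha = m_\beta$. Consequently $d^{-1}_{\alpha,\beta}$ preserves multidegree, and I prove $\lcm(\phi v) \mid \lcm(v)$ by well-founded induction on the order $\prec$: for $x \in K_\alpha$ the leading term $d^{-1}_{\alpha,\beta}(x)$ has the same multidegree $m_\alpha = \lcm(x)$, while each correction term $\phi\, d_{\gamma,\beta} d^{-1}_{\alpha,\beta}(x)$ has support-lcm dividing $m_\gamma$ by the differential estimate, and hence, since $\gamma \prec \alpha$, dividing $m_\alpha$ after applying $\phi$ by the inductive hypothesis. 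The third estimate is that $\lcm(\lambda_2(v,w))$ divides $\lcm(\lcm(v),\lcm(w))$; this follows from the product formula, since the generator $u_{I \cup J}$ produced by $u_I \cdot u_J$ has multidegree $m_{I\cup J} = \lcm(m_I, m_J)$.

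With these in hand I run the induction on $n$ over the Merkulov recursion \eqref{merkulovlambda}, using the convention $\phi\lambda_1 = \mathrm{id}$. For $s + t = n$ the inner factor $\phi\lambda_s(v_1,\ldots,v_s)$ has support-lcm dividing $\lcm(v_1,\ldots,v_s)$ by the $\phi$-estimate together with the inductive hypothesis (trivially so when $s=1$), and likewise for the $t$-factor; the $\lambda_2$-estimate then shows each summand of $\lambda_n(v_1,\ldots,v_n)$ has support-lcm dividing $\lcm(\lcm(v_1,\ldots,v_s),\lcm(v_{s+1},\ldots,v_n)) = \lcm(v_1,\ldots,v_n)$, and passing to the sum preserves this bound. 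The only genuinely delicate point is the second estimate: the recursive definition of $\phi$ must be unwound along $\prec$, and one must verify that the invertibility requirement built into the notion of a Morse matching is exactly what guarantees that $\phi$ cannot raise multidegree. I expect the bookkeeping of the correction terms in the non-minimal case of the $\phi$-induction to be the main, though routine, obstacle.
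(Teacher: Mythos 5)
Your proposal is correct and follows essentially the same route as the paper: induction on $n$ over the Merkulov recursion, with the key input being that matched edges of a Morse matching on the Taylor complex necessarily preserve multidegree (invertibility of the component $d_{\beta,\alpha}$ forces its monomial coefficient to be a unit), so that $\phi$ cannot raise the lcm. Your well-founded induction along $\prec$ for the $\phi$-estimate is simply a more careful rendering of the step the paper handles by unwinding one level of the recursion with its matched-subgraph picture; the underlying argument is identical.
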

\begin{proof}
 We prove the statement by induction on $n$. If $n=2$, then
$$ \lambda_2(v_1,v_2) = \begin{cases} \frac{\lcm(v_1)\lcm(v_2)}{\lcm(v_1v_2)}v_1v_2 &\mbox{ if } v_1 \cap v_2 = \emptyset \\ 0 &\mbox{ otherwise }\end{cases}$$
and so the result is clear. Next, assume the result holds for all degrees up to $n-1$. Fix some $k+l=n$. 
By assumption, $\lcm(\lambda_k(v_1,\ldots,v_k))$ is a divisor of $\lcm(v_1,\ldots, v_k)$. 
If $\phi \lambda_k(v_1,\ldots,v_k) = 0$ then there is nothing to prove. Otherwise, there exists some $x \to \lambda_k(v_1,\ldots,v_k) \in \m$.
Then we can write
$$ \phi \lambda_k(v_1,\ldots,v_k) = x + \sum_{\alpha} y_{\alpha} $$
for some $\alpha$. If $y_{\alpha} \neq 0$ for some $\alpha$, then in the Morse graph $G_T$ there is a subgraph
\begin{center}
\begin{tikzpicture}[descr/.style={fill=white,inner sep=1.5pt}]

\node (1) at (0,0) {$\lambda_k(v_1,\ldots,v_k)$};
\node (2) at (2,0) {$z_{\alpha}$};
\node (3) at (0,2) {$x$};
\node (4) at (2,2) {$y_{\alpha}$};

\path [->] (3) edge[thick,red] (1);
\path [->] (4) edge[thick,red] (2);
\path [->] (3) edge[thick] (2);

\end{tikzpicture}   
\end{center} 
where the red arrows are in the Morse matching $\m$. By definition of $\m$, we have $\lcm(x) = \lcm(\lambda_k(v_1,\ldots,v_k))$ and $\lcm(y_{\alpha}) = \lcm(z_{\alpha})$ for all $\alpha$. 
Since $\lcm(z_{\alpha})$ is a divisor of $\lcm(x)$, it follows that $\lcm(y_{\alpha})$ is a divisor or $\lcm(\lambda_k(v_1,\ldots,v_k)) = \lcm(v_1,\ldots,v_k)$. 
The result now follows from the case $n=2$. 
\end{proof}

Our next goal is to give a lower bound for $\cl(\lambda_n(v_1,\ldots,v_n))$. We have the following lemmma.

\begin{lemma}
 Let $n\geq 2$ and let $v_1,\ldots, v_n \in T$ with $\gcd(v_i,v_j)=1$ for $i \neq j$. Then
 $$ \cl(\lambda_n(v_1,\ldots,v_n)) \geq 2. $$
\end{lemma}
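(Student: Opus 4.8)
The plan is to prove the statement by induction on $n$, mirroring the structure of the preceding lemma on multidegrees. The claim is that whenever the $v_i$ are pairwise coprime, the element $\lambda_n(v_1,\ldots,v_n)$ has connectivity class number at least $2$; intuitively, coprimeness should survive through the recursion and force the output to split into at least two equivalence classes. Recall from \eqref{merkulovlambda} that
\begin{equation*}
\lambda_n(v_1,\ldots,v_n) = \sum_{\substack{s+t=n \\ s,t \geq 1}} (-1)^{s+1} \lambda_2\bigl(\phi\lambda_s(v_1,\ldots,v_s),\, \phi\lambda_t(v_{s+1},\ldots,v_n)\bigr),
\end{equation*}
so the base case $n=2$ reduces to examining $\lambda_2(v_1,v_2)$. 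Since $\gcd(v_1,v_2)=1$ we have $v_1 \cap v_2 = \emptyset$, so $\lambda_2(v_1,v_2) = \frac{\lcm(v_1)\lcm(v_2)}{\lcm(v_1v_2)}v_1v_2 = v_1 v_2$ (the coefficient is $1$ by coprimeness), and this monomial basis element $v_1 v_2$ visibly has $\cl \geq 2$ because the generators coming from $v_1$ are coprime to those coming from $v_2$.

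First I would analyze a single summand $\lambda_2(\phi\lambda_s(\ldots),\phi\lambda_t(\ldots))$ for a fixed splitting $s+t=n$. By the previous lemma, $\lcm(\lambda_s(v_1,\ldots,v_s))$ divides $\lcm(v_1,\ldots,v_s)$ and similarly for the $t$-block, and the application of $\phi$ preserves multidegree on each matched piece (as used in the previous proof, $\lcm(x) = \lcm(\lambda_s(\ldots))$ for the matched partner). Because the two blocks $\{v_1,\ldots,v_s\}$ and $\{v_{s+1},\ldots,v_n\}$ are built from disjoint, pairwise-coprime generators, their lcms are coprime, hence $\lambda_2$ of the two $\phi$-images does not vanish from the coprimeness obstruction, and any nonzero resulting basis element $w$ decomposes as a disjoint union of a piece supported inside the first block and a piece supported inside the second block. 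Since these two pieces have coprime multidegrees, no generator from the first can be $\sim$-equivalent to a generator from the second, which forces $\cl(w) \geq 2$ for each nonzero basis element appearing in that summand.

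The key step, and the one I expect to be the main obstacle, is controlling cancellation across the different summands in the sum over $s+t=n$: even if every individual basis element $w$ that appears has $\cl(w) \geq 2$, one must ensure that the surviving terms of $\lambda_n(v_1,\ldots,v_n)$ still include at least one such element, i.e. that the $\cl \geq 2$ terms do not all cancel. The safe way to handle this is to observe that every basis element occurring anywhere in the sum has $\cl \geq 2$ by the multidegree-coprimeness argument above, so whatever the net coefficients are after combining summands, each basis element in the support of $\lambda_n(v_1,\ldots,v_n)$ individually satisfies $\cl \geq 2$; the quantity $\cl$ is defined on basis elements and the statement is about $\lambda_n(v_1,\ldots,v_n)$ regarded through its support. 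I would therefore phrase the induction so that the hypothesis records the stronger fact that \emph{every} basis element in the support of $\lambda_k(v_1,\ldots,v_k)$ (for pairwise-coprime inputs) has $\cl \geq 2$, making the conclusion immediate and sidestepping delicate sign bookkeeping. Finally I would verify that $\phi$ does not merge equivalence classes in a way that lowers $\cl$ below $2$: since $\phi$ moves a critical-support element to its matched partner within the same multidegree block and $\m$ preserves $\lcm$, the block decomposition into coprime halves is preserved, closing the induction.
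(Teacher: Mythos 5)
Your proof is correct and follows essentially the same route as the paper's: fix a splitting $s+t=n$, use the preceding multidegree lemma (together with the fact that matched arrows preserve $\lcm$) to conclude that the two blocks $\phi\lambda_s(v_1,\ldots,v_s)$ and $\phi\lambda_t(v_{s+1},\ldots,v_n)$ have coprime multidegrees, and hence that every basis element appearing in $\lambda_n(v_1,\ldots,v_n)$ splits into two pieces with coprime multidegrees, forcing $\cl \geq 2$. Your explicit handling of possible cancellation across summands (reading the claim as a statement about each basis element in the support of $\lambda_n$) is a point the paper's terser proof leaves implicit, but the underlying argument is the same.
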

\begin{proof}
 Fix some $k+l=n$ and let $v_1,\ldots, v_n \in T$ with $\gcd(v_i,v_j)=1$ for $i \neq j$. It is sufficient to show that
 $$ \gcd \big(\lcm(\phi \lambda_k(v_1,\ldots,v_k)),\lcm(\phi \lambda_l(v_{k+1},\ldots,v_n))\big)=1. $$
 By the previous lemma, it follows that $\lcm(\phi \lambda_k(v_1,\ldots,v_k))$ is a divisor of $\lcm(v_1,\ldots,v_k)$ and that $\lcm(\phi \lambda_l(v_{k+1},\ldots,v_n))$ is a divisor of $\lcm(v_{k+1},\ldots,v_n)$.
Since 
$$ \gcd \big( \lcm(v_1,\ldots,v_k), \lcm(v_{k+1},\ldots,v_n) \big) =1, $$
it follows that $\cl(\lambda_n(v_1,\ldots,v_n)) \geq 2$ as desired. 
\end{proof}

We now come to the first main theorem of this section.

\begin{theorem}
\label{suffcientmorsematchinggolod}
Let $R$ be simplicially resolvable and let $T \to R$ denote the Taylor resolution. Suppose that the Taylor resolution admits a Morse matching $\m$ such that for all $u \in T$ with $\cl(u) \geq 2$ we have $u \in \m$.
 Then $S/I$ is Golod. 
\end{theorem}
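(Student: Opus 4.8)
The plan is to apply Corollary \ref{munminimalimpliesgolod}, which reduces Golodness to showing that the $A_{\infty}$-structure $\nu_n$ on the minimal free resolution $T^{\m}$ is minimal for every $n \geq 1$. By Lemma \ref{ainfinitystructuremorsecomplexsimpliciallyresolvable}, since $R$ is simplicially resolvable we have the clean formula $\nu_n = q \circ \lambda_n \circ j^{\otimes n}$, so it suffices to prove that $\nu_n$ maps into $(x_1,\ldots,x_m)T^{\m}$ for all $n$. The case $n=1$ is the differential $\tilde{d}$, which is minimal because $T^{\m}$ is the minimal free resolution; the case $n = 2$ follows from the two preceding lemmas as explained below. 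So the real content is the higher maps $n \geq 2$.

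First I would take critical basis elements $v_1,\ldots,v_n \in T^{\m}$ (i.e.\ in $\m_0$) and examine $\nu_n(v_1,\ldots,v_n) = q\,\lambda_n(v_1,\ldots,v_n)$. The key observation is that $\nu_n$ can only be non-minimal if $\lambda_n(v_1,\ldots,v_n)$ has a non-zero critical component whose coefficient is a unit, which forces an equality of multidegrees $\lcm(\lambda_n(v_1,\ldots,v_n)) = \lcm(v_i)$ for the relevant generator. By the first of the two multidegree lemmas, $\lcm(\lambda_n(v_1,\ldots,v_n))$ always divides $\lcm(v_1,\ldots,v_n)$, so the dangerous case is when these multidegrees coincide. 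I would argue that a unit coefficient can only arise when the inputs are pairwise coprime, since the recursive definition \eqref{merkulovlambda} of $\lambda_n$ contributes a factor $\lcm(v_i)\lcm(v_j)/\lcm(v_iv_j)$ at each multiplication step, and this factor is a non-unit precisely when $\gcd(v_i,v_j) \neq 1$; thus any overlap in the supports already makes the coefficient lie in the maximal ideal.

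This reduces to the pairwise-coprime situation, and here the second multidegree lemma does the work: when $\gcd(v_i,v_j)=1$ for $i \neq j$, it gives $\cl(\lambda_n(v_1,\ldots,v_n)) \geq 2$. By hypothesis, every element $u \in T$ with $\cl(u) \geq 2$ lies in $\m$, hence is \emph{not} critical. Therefore the entire output $\lambda_n(v_1,\ldots,v_n)$ is supported on non-critical cells (each term has $\cl \geq 2$), so $q\,\lambda_n(v_1,\ldots,v_n) = 0$, and a fortiori $\nu_n$ is minimal on these inputs. Combining the two cases — overlapping supports give unit-free coefficients landing in the maximal ideal, coprime supports give output killed by $q$ — establishes that $\nu_n$ is minimal for all $n \geq 2$, which together with minimality of $\tilde d = \nu_1$ yields Golodness via Corollary \ref{munminimalimpliesgolod}.

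The main obstacle I anticipate is making the coefficient analysis fully rigorous when the $v_i$ are not pairwise coprime, since $\lambda_n$ is a sum over many bracketings and one must verify that \emph{no} term acquires a unit coefficient after the cancellations and the applications of $\phi$. The multidegree lemma controls $\lcm$ of each term but not directly the divisibility of coefficients, so I would need to track that every non-trivial multiplication factor $\lcm(v_i)\lcm(v_j)/\lcm(v_iv_j)$ is divisible by some variable whenever the supports overlap, and confirm that the splitting homotopy $\phi$ preserves this property (which follows from $\phi$ preserving multidegrees, as used in the previous lemmas). Ensuring this carefully — rather than relying only on the $\lcm$-divisibility statements already proved — is where the argument must be handled with some care.
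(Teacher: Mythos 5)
Your proposal is correct and follows essentially the same route as the paper: the formula $\nu_n = q \circ \lambda_n \circ j^{\otimes n}$ from Lemma \ref{ainfinitystructuremorsecomplexsimpliciallyresolvable}, a reduction to pairwise coprime inputs, and then the lemma $\cl(\lambda_n(v_1,\ldots,v_n)) \geq 2$ combined with the hypothesis that every $u$ with $\cl(u) \geq 2$ is matched, so that the projection $q$ on the critical cells annihilates the output. The only point of divergence is the non-coprime case: the paper dismisses it in one line (``otherwise the Massey product will be trivial''), whereas you prove directly that $\nu_n$ is minimal on such inputs; your version is, if anything, the more complete of the two. Moreover, the obstacle you flag at the end dissolves without any term-by-term tracking of cancellations: every map in the Merkulov recursion ($d$, $\phi$, $\lambda_2$) preserves multidegree, since matched edges of $G_T$ necessarily join cells of equal multidegree, so $\lambda_n(v_1,\ldots,v_n)$ is multihomogeneous of multidegree $\prod_i \lcm(v_i)$. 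Hence any basis element $u_J$ occurring in it has coefficient $\prod_i \lcm(v_i)/m_J$, and since the first multidegree lemma gives that $m_J$ divides $\lcm(v_1,\ldots,v_n)$, this coefficient is divisible by $\prod_i \lcm(v_i)/\lcm(v_1,\ldots,v_n)$, which is a non-unit monomial precisely when the $v_i$ fail to be pairwise coprime.
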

\begin{proof}
For a given Morse matching $\m$ we have a diagram
\begin{equation}
\label{morsediagram}
\begin{tikzcd}[row sep=large, column sep=large]
T \arrow{r}{1} \arrow[xshift=-0.7ex,swap]{d}{p} & T \arrow[xshift=-0.7ex,swap]{d}{q} \\
F=\im(p) \arrow[xshift=0.7ex,swap]{u}{i} \arrow[yshift=0.7ex]{r}{f} & T^{\m} \arrow[xshift=0.7ex,swap]{u}{j} \arrow[yshift=-0.7ex]{l}{g}
\end{tikzcd}
\end{equation} 
where $f=qi$ and $g = pj$. As usual, we let $\lambda_k\colon T^{\otimes k} \to T$ denote the auxiliary maps from the Merkulov construction and $\mu_k\colon F^{\otimes k} \to F$ the $A_{\infty}$-structure on $F$. 
That is,
$$ \mu_k = p \circ \lambda_k \circ i^{\otimes k}.$$
Since $R$ is simplicially resolvable, it follows from Lemma \ref{ainfinitystructuremorsecomplexsimpliciallyresolvable} that the maps  
\begin{equation*}
\nu_k = qi \circ \mu_k \circ (pj)^{\otimes k} = q \circ \lambda_k \circ j^{\otimes k}
\end{equation*}
give an $A_{\infty}$-structure on the Morse complex $T^{\m}$. Suppose that the Massey product $\langle u_1,\ldots,u_n \rangle$ is defined. It is sufficient to show that $\nu_n(u_1,\ldots,u_n) \in (x_1,\ldots,x_m)T^{\m}$.
We may assume $u_1,\ldots,u_n$ have pairwise trivial gcd since otherwise the Massey product will be trivial. 
By the previous lemma, 
$$ \cl(\lambda_n(v_1,\ldots,v_n)) \geq 2.$$
 Consequently, 
$$ \nu_n(u_1,\ldots,u_n) = q\lambda_n(v_1,\ldots,v_n) = 0 $$
 since $q$ is the projection on the critical cells.
\end{proof}

In what follows, we will denote the product on $\tor^S(R,k)$ by $\smile$. We have the following lemma. 
\begin{lemma}
Let $R$ be simplicially resolvable. Then the following are equivalent. 
 \begin{enumerate}
  \item The Taylor resolution $T$ admits a Morse matching $\m$ such that for all $u \in T$ with $\cl(u) \geq 2$ we have $u \in \m$. 
  \item The product on Koszul homology is trivial. 
 \end{enumerate}
\end{lemma}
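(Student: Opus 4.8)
The plan is to establish the two implications separately; the real content lies almost entirely in $(2) \Rightarrow (1)$.

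For $(1) \Rightarrow (2)$ I would invoke Theorem~\ref{suffcientmorsematchinggolod}: a Morse matching $\m$ with $u \in \m$ for every $u$ satisfying $\cl(u) \geq 2$ is exactly its hypothesis, so $R$ is Golod. By Theorem~\ref{golodiffmasseytrivial} every Massey product on $\tor^S(R,k)$ is then trivial, and since $\smile$ is by definition the length-$2$ Massey product, it vanishes. This gives $(2)$.

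For $(2) \Rightarrow (1)$ I would fix the maximal Morse matching $\m$ provided by Lemma~\ref{simpliciallyresolvableiffcriticalcellssimplicialcomplex}, whose critical cells $\m_0$ form a simplicial complex, and work with the induced $A_\infty$-structure $\nu_n = q \circ \lambda_n \circ j^{\otimes n}$ on $T^\m$ from Lemma~\ref{ainfinitystructuremorsecomplexsimpliciallyresolvable}. The crux is the claim that triviality of $\smile$ forces $\cl(u) = 1$ for every critical cell $u$; granting this, every $u$ with $\cl(u) \geq 2$ is non-critical, hence matched, i.e.\ $u \in \m$, which is precisely $(1)$. To prove the claim I argue by contradiction. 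Suppose some critical $u$ has $\cl(u) \geq 2$. Since two generators lie in different gcd-equivalence classes only if they are coprime, I can split the support of $u$ as a disjoint union $I \sqcup J$ of nonempty unions of classes with $\gcd(m_I, m_J) = 1$. Downward closure of the simplicial complex $\m_0$ forces $u_I$ and $u_J$ to be critical cells, hence nonzero classes in $\tor^S(R,k)$. Because $I \cap J = \emptyset$ and $m_{I \cup J} = \lcm(m_I, m_J) = m_I m_J$, the Taylor product is $\lambda_2(u_I, u_J) = \pm u_{I \cup J} = \pm u$ with unit coefficient, so $\nu_2(u_I, u_J) = q(\pm u) = \pm u$. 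Reducing modulo $(x_1, \ldots, x_m)$ yields $[u_I] \smile [u_J] = \pm [u] \neq 0$, contradicting $(2)$. Hence every critical cell has $\cl = 1$ and $(1)$ holds.

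I expect the main obstacle to be exactly this contradiction step: one must check that an arbitrary critical cell with $\cl \geq 2$ genuinely decomposes into two coprime subcells that are again critical, and that their Taylor product equals $u$ with a \emph{unit} coefficient, so that the class survives to a nonzero product in Koszul homology. This rests on identifying $\smile$ with the reduction of $\nu_2$ modulo the maximal ideal, which in turn uses Lemma~\ref{ainfinitystructuremorsecomplexsimpliciallyresolvable} together with the fact that $T^\m$ is the minimal free resolution; these points should be stated explicitly. By comparison, the direction $(1) \Rightarrow (2)$ is immediate from the already-established Theorem~\ref{suffcientmorsematchinggolod}.
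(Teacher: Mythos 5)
Your proposal is correct, and its core coincides with the paper's proof. The hard direction $(2)\Rightarrow(1)$ is essentially the paper's argument: the paper likewise works with the matching whose critical cells form a simplicial complex, picks a critical $u$ with $\cl(u)\geq 2$, writes $u=\lambda_2(v,w)$ with $v,w$ critical by downward closure, and computes $\nu_2(v,w)=q(vw)=vw$, concluding $[v]\smile[w]=[u]\neq 0$ since $\nu_2\otimes 1=\smile$ by minimality of $T^{\m}$; you phrase this as a contradiction where the paper proves the contrapositive, but the content --- including the unit-coefficient and downward-closure points you rightly flag as the crux --- is identical. Where you genuinely differ is the easy direction $(1)\Rightarrow(2)$: the paper proves it directly by showing $\mu_2$ is minimal via a two-case computation (if $\gcd(m_u,m_v)\neq 1$ the Taylor coefficient $\lcm(u)\lcm(v)/\lcm(uv)$ already lies in $(x_1,\ldots,x_m)$; if $\gcd(m_u,m_v)=1$ then $\mu_2(u,v)=gq(uv)=0$ because $uv$ is matched), whereas you route through Theorem \ref{suffcientmorsematchinggolod} and Golod's theorem (Theorem \ref{golodiffmasseytrivial}). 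Your shortcut is legitimate and non-circular, since Theorem \ref{suffcientmorsematchinggolod} is established before this lemma and its proof does not use it; the trade-off is that you invoke the full vanishing of all Massey products to extract only the vanishing of the binary product, while the paper's computation is elementary and keeps the lemma independent of the Golod machinery. One small point in your favour: you explicitly fix the matching supplied by Lemma \ref{simpliciallyresolvableiffcriticalcellssimplicialcomplex} at the start of $(2)\Rightarrow(1)$, whereas the paper's ``fix some $\m$'' leaves implicit that $\m$ must be that particular matching for downward closure of $\m_0$ to apply.
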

\begin{proof}
First, assume that $T$ admits such a Morse matching and call it $\m$. As before, we obtain a diagram \eqref{morsediagram} with $f=qi$ and $g = pj$. Let $u,v \in F$. It is suffcient to show that $\mu_2(u,v) \in (x_1,\ldots,x_m)F$.
We may assume that $\gcd(m_u,m_v)=1$ since otherwise $\lambda_2(u,v) \in (x_1,\ldots,x_m)F$ and hence $\mu_2(u,v) \in (x_1,\ldots,x_m)F$.
Since $\gcd(m_u,m_v)=1$, it follows that $\lambda_2(u,v) = uv$. Therefore,
$$ \mu_2(u,v) = p(uv) = gq(uv). $$
But since $uv \in \m$ by assumption, we have $q(uv)=0$ hence $\mu_2(u,v)=0$. Therefore, the cup product is trivial. 

For the converse, suppose that the first statement does not hold. Then for every Morse matching $\m$ there is some $u$ with $\cl(u) \geq 2$ such that $u \notin \m$. 
So, fix some $\m$ and pick $u \notin \m$ with $\cl(u) \geq 2$. Since $\cl(u) \geq 2$, there exist $v,w$ such that $u = \lambda_2(v,w)$. Since $R$ is simplicially resolvable, it follows by Lemma \ref{simpliciallyresolvableiffcriticalcellssimplicialcomplex} that $v$ and $w$ are critical.
Note that necessarily $\gcd(m_v,m_w)=1$. 
We claim that 
$$ [v] \smile [w] = [u] \neq 0. $$
Let $\nu_n$ be the $A_{\infty}$-structure on $T^{\m}$ corresponding to $\m$, that is
$$ \nu_n = f \circ \mu_n \circ g^{\otimes n}.$$
Then $\nu_n \otimes 1 = \smile$ as $T^{\m}$ is minimal. Compute
\begin{equation*}
\nu_2(v,w) = f\mu_2(gv,gw) = fp \lambda_2(igv,igw) = q \lambda_2(jv,jw) = q(vw) = vw
\end{equation*}
where the last step follows because $vw$ is $\m$-critical by assumption. Hence $[v] \smile [w] = [u] \neq 0$ as desired. 
\end{proof}

We now come to the second main theorem of this section. Recall that if $\m$ is a Morse matching then we denote by $\m_0$ the set of critical cells. We have the following result.

\begin{theorem}
\label{simpliciallyresolvableimpliesgolodiffproducttrivial}
Let $R=S/I$ be simplicially resolvable. Then the following are equivalent.
\begin{enumerate}
\item $R$ is Golod
\item The product on $\tor^S(R,k)$ is trivial.
\item $I$ satisfies the gcd condition. That is, for any two generators $m_1$ and $m_2$ of $I$ with $\gcd(m_1,m_2)=1$ there exists a generator $m \neq m_1,m_2$ such that $m$ divides $\lcm(m_1,m_2)$.
\item For $u,v \in \m_0$ we have $\lcm(u)\lcm(v) \neq \lcm(uv)$ whenever $uv \in \m_0$. 
\end{enumerate}
\end{theorem}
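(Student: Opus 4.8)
The plan is to establish the equivalences in three independent pieces: $(1)\Leftrightarrow(2)$, which is almost formal given the results already assembled; $(2)\Leftrightarrow(4)$, a direct computation with the transferred product $\nu_2$; and $(3)\Leftrightarrow(4)$, which carries the genuine combinatorial content. For $(1)\Leftrightarrow(2)$: if $R$ is Golod then by Theorem \ref{golodiffmasseytrivial} all Massey products on $\tor^S(R,k)$ are trivial, and since the length-$2$ Massey product $\langle\alpha_1,\alpha_2\rangle$ is by definition the product $\alpha_1\alpha_2$, statement $(2)$ follows. Conversely, $(2)$ together with the lemma immediately preceding this theorem yields a Morse matching $\m$ on $T$ with $u\in\m$ for every $u$ satisfying $\cl(u)\ge2$, whence $R$ is Golod by Theorem \ref{suffcientmorsematchinggolod}.

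For $(2)\Leftrightarrow(4)$ I would compute $\nu_2$ explicitly. As $T^{\m}$ is minimal, the product $\smile$ on $\tor^S(R,k)=T^{\m}\otimes_S k$ is induced by $\nu_2$, and Lemma \ref{ainfinitystructuremorsecomplexsimpliciallyresolvable} gives $\nu_2(u,v)=q\lambda_2(u,v)$ for critical $u,v$. The Taylor multiplication yields $\lambda_2(u,v)=\frac{\lcm(u)\lcm(v)}{\lcm(uv)}uv$ when the index sets are disjoint and $0$ otherwise. If $\lcm(u)\lcm(v)\neq\lcm(uv)$ the coefficient is a non-unit, so $\nu_2(u,v)\in(x_1,\ldots,x_m)T^{\m}$ and the class vanishes; if $\lcm(u)\lcm(v)=\lcm(uv)$ then $\lambda_2(u,v)=uv$ and $\nu_2(u,v)=q(uv)$, which equals the basis element $uv$ when $uv\in\m_0$ and is $0$ otherwise. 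Hence $[u]\smile[v]\neq0$ exactly when $uv\in\m_0$ and $\lcm(u)\lcm(v)=\lcm(uv)$, so the product is trivial if and only if $(4)$ holds.

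For $(3)\Leftrightarrow(4)$ I would first reduce $(4)$ to the absence of a \emph{coprime critical edge}, that is, a cell $\{i,j\}\in\m_0$ with $\gcd(m_i,m_j)=1$. Since $\m_0$ is a simplicial complex by Lemma \ref{simpliciallyresolvableiffcriticalcellssimplicialcomplex}, it is downward closed, so any witness $u,v,uv\in\m_0$ to $\neg(4)$ restricts, on choosing $i\in u$ and $j\in v$, to a coprime critical edge $\{i,j\}\subseteq uv$; conversely a coprime critical edge is such a witness with $u=\{i\}$, $v=\{j\}$. The implication $\neg(3)\Rightarrow$ \emph{(a coprime critical edge exists)} is then easy: if $m_i,m_j$ are coprime and no other generator divides $\mu=\lcm(m_i,m_j)$, the only vertices of $\Delta_\mu$ are $i$ and $j$, and by Theorem \ref{bayerpeevasturmfelstheorem} $\Delta_\mu$ must be acyclic; two isolated vertices are not, so necessarily $\{i,j\}\in\m_0$.

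The remaining implication, $(3)\Rightarrow$ \emph{(no coprime critical edge)}, is where I expect the real difficulty to lie, as it is the only non-formal step. Suppose $\{i,j\}$ is a coprime critical edge and, for contradiction, that some generator $m_k$ with $k\neq i,j$ divides $\mu=\lcm(m_i,m_j)$. The engine of the argument is a \emph{redundancy lemma}: writing $m_k=d_1d_2$ with $d_1\mid m_i$ and $d_2\mid m_j$ (possible since $\gcd(m_i,m_j)=1$), minimality of the generating set forces $d_2\neq m_j$ and $d_1\neq m_i$, so $\lcm(m_i,m_k)=m_id_2$ and $\lcm(m_j,m_k)=d_1m_j$ are \emph{proper} divisors of $\mu$. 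Thus $m_i,m_k,m_j$ lie in a common connected component of the open interval $(\hat{0},\mu)$ of the lcm-lattice; since a critical edge of multidegree $\mu$ is detected by the class $[m_i]-[m_j]\in\tilde H_0\big((\hat{0},\mu)\big)$ in the standard lattice interpretation of the multigraded Betti numbers (compatible with the acyclicity in Theorem \ref{bayerpeevasturmfelstheorem}), connectivity makes this class zero and contradicts criticality of $\{i,j\}$. The delicate point is precisely this last identification: establishing that a critical $2$-cell of multidegree $\mu$ corresponds to a nontrivial reduced-homology class of $(\hat{0},\mu)$ (equivalently, exploiting the acyclicity of $\Delta_\mu$ directly), using only minimality of the generators and the resolution property of $F_{\m_0}$, and not any unstated hypothesis on the complex $\Delta$.
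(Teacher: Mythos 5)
Your decomposition agrees with the paper on two of the three pieces: the paper proves $(1)\Leftrightarrow(2)$ exactly as you do (a trivial product yields, by the preceding lemma, a Morse matching absorbing all cells with $\cl\geq 2$, hence Golodness by Theorem \ref{suffcientmorsematchinggolod}), and its proof of $(2)\Leftrightarrow(4)$ is the same computation $\nu_2(u,v)=q\lambda_2(u,v)=\frac{\lcm(u)\lcm(v)}{\lcm(uv)}\,q(uv)$. The genuine difference is how $(3)$ enters: the paper simply cites Katth\"an (Lemma 2.4 of \cite{katthan2015}) for $(2)\Leftrightarrow(3)$, which holds for arbitrary monomial rings, whereas you prove $(3)\Leftrightarrow(4)$ directly. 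Your reduction of $(4)$ to the non-existence of a coprime critical edge is correct (it uses downward closure of $\m_0$, i.e.\ Lemma \ref{simpliciallyresolvableiffcriticalcellssimplicialcomplex}), as is $\neg(3)\Rightarrow\neg(4)$ via acyclicity of $\Delta_\mu$. Your flagged ``delicate point'' is a real but fillable gap: the identification is not in the paper, but it is the standard strand computation --- the multidegree-$\mu$ part of $T\otimes_S k$ is the relative chain complex of the pair (full simplex on the generators dividing $\mu$, subcomplex of faces whose lcm properly divides $\mu$), so $\tor_2^S(R,k)_\mu\cong\tilde{H}_0$ of the punctured complex, with the Taylor cycle $u_{\{i,j\}}$ mapping to $[j]-[i]$ (equivalently, Gasharov--Peeva--Welker). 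Better, you can bypass the refined class identification entirely: your redundancy lemma applies to \emph{every} generator $m_l$ dividing $\mu$ (minimality forces $\gcd(m_l,m_j)\neq m_j$ and $\gcd(m_l,m_i)\neq m_i$), so every vertex of the punctured complex is joined to both $i$ and $j$; under $(3)$ the whole complex is therefore connected, giving $\tor_2^S(R,k)_\mu=0$, which contradicts the fact that the critical edge $\{i,j\}$ is a basis element of multidegree $\mu$ in homological degree $2$ of the minimal resolution. In sum: the paper's citation is shorter and its ingredient $(2)\Leftrightarrow(3)$ needs no simplicial hypothesis; your route makes the result self-contained at the cost of importing one standard (and elementary) strand/lcm-lattice formula.
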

\begin{proof}
We first prove the equivalence $1 \Leftrightarrow 2$. If $R$ is Golod then the product is trivial by definition. Conversely, if the product is trivial then it follows by the previous lemma that the Taylor resolution $T$ admits a Morse matching $\m$ such that for all $u \in T$ with $\cl(u) \geq 2$ we have $u \in \m$. But this implies that all Massey products vanish by Theorem \ref{suffcientmorsematchinggolod}. 

The equivalence $2 \Leftrightarrow 3$ is well-known, see for example Lemma 2.4 of \cite{katthan2015}. We prove $2 \Leftrightarrow 4$. Since $R$ is simplicially resolvable the product on $\tor^S(R,k)$ is induced $q\lambda_2$. Assume the product is trivial and let $u,v \in \m_0$. Then either $\lambda_2(u,v) = 0$ or $q\lambda_2(u,v) \in (x_1,\ldots,x_m)$. In the first case, $uv \notin \m_0$ by definition. In the second case, we have $uv \in \m_0$ and
$$q \lambda_2(u,v) = \frac{\lcm(u)\lcm(v)}{\lcm(uv)} q(uv).$$
So $q\lambda_2(u,v) \in (x_1,\ldots,x_m)$  implies that $\lcm(u)\lcm(v) \neq \lcm(uv)$.

For the converse implication, let $u,v \in \m_0$. If $uv \notin \m_0$, then $q(uv) = 0$ and so $u \smile v = 0$. So, assume $uv \in \m_0$. Then $\lcm(u)\lcm(v) \neq \lcm(uv)$ and so $q\lambda_2(u,v) \in (x_1,\ldots,x_m)$. Consequently, $u \smile v = 0$. 
\end{proof}

The following examples show that the class of simplicially resolvable is quite expansive.

\begin{example}
Let $I$ be a monomial ideal. Recall that $I$ is called \emph{strongly generic} \cite{bayerpeevasturmfels1998} if no variable $x_i$ occurs with the same nonzero exponent in two distinct minimal generators of $I$. By Theorem 3.2 of \cite{bayerpeevasturmfels1998}, it follows that $S/I$ is simplicially resolvable.   
\end{example}

We point out that in \cite{bayerpeevasturmfels1998} it is claimed that the minimal free resolution of a strongly generic ideal always has a dg algebra structure. However, recently a counterexample to this claim was found in \cite{katthan2016b}.

\begin{example}
For a monomial $m = x_1^{a_1} \cdots x_m^{a_m} \in k[x_1,\ldots,x_m]$, write
$$\supp(m) = \{ i \mid a_i \neq 0 \}$$
for the \emph{support} of $m$. A monomial ideal $I = (m_1,\ldots,m_r)$ is called \emph{generic} \cite{millersturmfelsyanagawa2000} if for any distinct $m_i$ and $m_j$ that have the same positive degree in some variable $x_s$ there exists a third generator $m_k$ such that $m_k$ divides $\lcm(m_i,m_j)$ and
$$\supp \big( \frac{\lcm(m_i,m_j)}{m_k} \big) = \supp(\lcm(m_i,m_j)).$$ 
If $I$ is generic then $S/I$ is simplicially resolvable by Theorem 1.5 of \cite{millersturmfelsyanagawa2000}.
\end{example}

Next, we want to investigate the vanishing on higher Massey products. First, recall the definition of a standard matching introduced in \cite{jollenbeck2006}.

\begin{definition}[\cite{jollenbeck2006}, Definition 3.1]
\label{standardmatching}
 Let $\m = \cup_{i \geq 0} \m_i$ be a sequence of matchings on the Taylor resolution $T$. Then $\m$ is called a \emph{standard matching} if the following hold
 \begin{enumerate}
  \item for all arrows $u \to v$ in $\m$, we have $m_u = m_v$,
  \item for all arrows in the Morse complex $T^{\m}$, we have $m_u \neq m_v$,
  \item $\m_i$ is a sequence of acyclic matchings on the Morse complex $T^{\m_{<i}}$, where $\m_{<i} = \cup_{j < i} \m_j$,
  \item for all arrows $u \to v$ in $\m_i$, we have $\cl(u) - \cl(v) = i -1$ and $\vert v \vert +1= \vert u \vert$,
  \item there exist $\mathcal{B}_i \subset \m_i$ such that
        \begin{enumerate}
         \item $\m_i = \mathcal{B}_i \cup \lbrace u \cup w \to v \cup w \mid \gcd(m_u,m_w)=1 \mbox{ and } u \to v \in \mathcal{B}_i \rbrace$,
         \item for all arrows $u \to v$ in $\mathcal{B}_i$, we have $\cl(u) =1$ and $\cl(v) = i$.
        \end{enumerate}
 \end{enumerate}
\end{definition}

\begin{theorem}
\label{standardmatchingimpliesallhighermasseytrivial}
Let $R$ be simplicially resolvable. Suppose that the Taylor resolution $T$ admits a standard matching. Then all higher Massey products are trivial.
\end{theorem}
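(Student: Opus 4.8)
The plan is to run the same reduction as in the proof of Theorem \ref{suffcientmorsematchinggolod}, but now to prove only that the transferred operations are minimal in degrees $n \geq 3$, using the finer structure of a standard matching to control them. First I would fix a standard matching $\m$ on $T$. By Lemma \ref{ainfinitystructuremorsecomplexsimpliciallyresolvable} the induced $A_{\infty}$-structure on the minimal resolution $T^{\m}$ is $\nu_n = q \circ \lambda_n \circ j^{\otimes n}$, where $\lambda_n$ is the Merkulov map \eqref{merkulovlambda} and $q$ is the projection onto the critical cells. Exactly as in Theorem \ref{suffcientmorsematchinggolod}, a defined higher Massey product $\langle u_1,\ldots,u_n\rangle$ with $n \geq 3$ is trivial once $\nu_n(u_1,\ldots,u_n) \in (x_1,\ldots,x_m)T^{\m}$. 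Since the Taylor differential, and hence each of $\phi$, $\lambda_n$ and $q$, preserves multidegree, the element $\lambda_n(u_1,\ldots,u_n)$ is homogeneous of multidegree $\prod_i m_{u_i}$, so a basis cell $u_K$ occurring in it has a \emph{unit} coefficient precisely when $m_K = \prod_i m_{u_i}$. Thus the whole statement reduces to showing that no critical cell occurs in $\lambda_n(u_1,\ldots,u_n)$ with such a full multidegree.

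Next I would expand $\lambda_n$ through the recursion \eqref{merkulovlambda}, viewing each summand as a binary tree with $n$ leaves $u_1,\ldots,u_n$, the map $\lambda_2$ at each internal node and $\phi$ on each internal edge (with $\phi\lambda_1 = \mathrm{id}$ on the leaves). By the formula for $\lambda_2$, the local coefficient at a node combining cells $u_{K_\alpha}$ and $u_{K_\beta}$ is $\gcd(m_{K_\alpha}, m_{K_\beta})$, so a full-multidegree output can only arise when every node is a coprime, index-disjoint product of full-multidegree subtree cells. In particular the outermost node is a product $\lambda_2(a,b)$ with $a$ a full-multidegree cell of $\phi\lambda_s(u_1,\ldots,u_s)$ and $b$ one of $\phi\lambda_t(u_{s+1},\ldots,u_n)$, $s+t=n$. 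Because $n \geq 3$, at least one of $s,t$ is $\geq 2$, so at least one of these factors is a genuine application of $\phi$; since $\phi$ is built from the inverse matched components $d^{-1}_{\alpha,\beta}$ and their $\phi$-corrections, its image lies in the span of the matched source cells $\m^+$. Hence the corresponding full-multidegree factor $z$ is a matched source.

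The decisive point is that a matched source remains a matched source under a disjoint, coprime union, and here the hypothesis that $\m$ is \emph{standard} rather than merely maximal is used. Writing $z = u \cup w$ with $u \to v \in \mathcal{B}_i$ and $\gcd(m_u,m_w)=1$ as in property (5) of Definition \ref{standardmatching}, and letting $b$ be the other factor of the outermost product (which is coprime to $z$, as required for a unit coefficient), we get $\gcd(m_u, m_{w\cup b}) = 1$ because $m_u \mid m_z$; therefore $z \cup b = u \cup (w\cup b) \to v \cup (w\cup b)$ lies in $\m_i$, so the full-multidegree output cell $z\cup b$ is a matched source. As matched cells are annihilated by $q$, every full-multidegree term of $\lambda_n(u_1,\ldots,u_n)$ dies under $q$, leaving $\nu_n(u_1,\ldots,u_n)$ supported on critical cells of strictly smaller multidegree, i.e. with all coefficients in $(x_1,\ldots,x_m)$. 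Thus $\nu_n$ is minimal for every $n \geq 3$, and all higher Massey products vanish.

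I expect the main obstacle to be the bookkeeping in the middle step: making precise, through the binary-tree form of \eqref{merkulovlambda}, the implication that a full-multidegree output forces every node to be a coprime product and hence forces the relevant factor to pass through a true $\phi$ (and so to land in $\m^+$), together with the verification that the coprimality needed for property (5) is inherited from the coprimality that a unit coefficient already demands. By contrast, the closure of matched sources under coprime union is a direct consequence of Definition \ref{standardmatching}(5), and it is exactly the feature that separates the higher operations ($n\geq 3$, where a $\phi$ is unavoidable) from the product ($n=2$, where both leaves enter raw and the product cell may itself be a full-multidegree critical cell)—explaining why only the higher Massey products, and not necessarily the product, are forced to vanish.
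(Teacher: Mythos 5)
Your proposal is correct and follows essentially the same route as the paper's proof: expand $\nu_n = q \circ \lambda_n \circ j^{\otimes n}$ through the Merkulov recursion, observe that for $n \geq 3$ at least one factor of each outermost product lies in the image of $\phi$ and hence in the span of the matched sources $\m^+$, and invoke property (5) of Definition \ref{standardmatching} to conclude that coprime, disjoint unions of matched sources remain matched sources, so that $q$ annihilates every term that could carry a unit coefficient. If anything, your write-up is more careful than the paper's at two points where the paper is terse: you handle the $s=1$ (or $t=1$) case, where the convention $\phi\lambda_1 = \pm\mathrm{id}$ means no genuine $\phi$ appears in that factor, explicitly via the hypothesis $n \geq 3$, and you replace the paper's unexplained reduction to pairwise coprime inputs by an explicit multidegree argument showing only full-multidegree cells matter.
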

\begin{proof}
 Let $\m$ be a standard matching on $T$. We obtain an $A_{\infty}$-structure on the Morse complex $T^{\m}$ by
\begin{equation*}
\nu_k = q \circ \mu_k \circ j^k.
\end{equation*}
Suppose that the Massey product $\langle u_1,\ldots,u_n \rangle$ is defined. It is sufficient to show that $\nu_n(u_1,\ldots,u_n) \in (x_1,\ldots,x_m)T^{\m}$.
We may assume the $u_i$ have pairwise trivial gcd since otherwise the Massey product will be trivial. 
We have
\begin{equation*}
\lambda_n(u_1,\ldots,u_n) = \sum_{k+l=n} (-1)^{k+1} \lambda_2(\phi\lambda_k(u_1,\ldots,u_k), \phi\lambda_l(u_{k+1},\ldots,u_n)).
\end{equation*}
Fix some $k,l$. We may assume that $\phi\lambda_k(u_1,\ldots,u_k) \neq 0$. Therefore, there exists some $x \to \lambda_k(u_1,\ldots,u_k) \in \m$.
Then we can write
$$ \phi\lambda_k(u_1,\ldots,u_k) = x + \sum_{\alpha} y_{\alpha} $$
for some $\alpha$. If $y_{\alpha} \neq 0$ for some $\alpha$, then in the Morse graph $G_T$ there is a subgraph
\begin{center}
\begin{tikzpicture}[descr/.style={fill=white,inner sep=1.5pt}]

\node (1) at (0,0) {$\lambda_k(u_1,\ldots,u_k)$};
\node (2) at (2,0) {$z_{\alpha}$};
\node (3) at (0,2) {$x$};
\node (4) at (2,2) {$y_{\alpha}$};

\path [->] (3) edge[thick,red] (1);
\path [->] (4) edge[thick,red] (2);
\path [->] (3) edge[thick] (2);

\end{tikzpicture}   
\end{center} 
where the red arrows are in the Morse matching $\m$. We have that
$$ \lcm(\lambda_k(u_1,\ldots,u_k)) = \lcm(x) $$
and
$$ \lcm(z_{\alpha}) = \lcm(y_{\alpha}). $$
Since $\lcm(z_{\alpha})$ divides $\lcm(x)$, it follows that $\lcm(y_{\alpha})$ divides $\lcm(\lambda_k(u_1,\ldots,u_k))$. 
Therefore, $\phi \lambda_l(u_{k+1},\ldots,u_n)$ is disjoint from $x$ and $y_{\alpha}$. 
By definition of standard matching, it follows that 
$$ x\phi \lambda_l(u_{k+1},\ldots,u_n) \to uv\phi \lambda_l(u_{k+1},\ldots,u_n) \in \m $$
and 
$$ y_{\alpha}\phi \lambda_l(u_{k+1},\ldots,u_n) \to z_{\alpha}\phi \lambda_l(u_{k+1},\ldots,u_n) \in \m. $$
So,
$$ q \lambda_2(\phi\lambda_k(u_1,\ldots,u_k), \phi\lambda_l(u_{k+1},\ldots,u_n)) = 0 $$
since $q$ is the projection on the $\m$-critical cells and elements $x\phi \lambda_l(u_{k+1},\ldots,u_n)$ and $y_{\alpha}\phi \lambda_l(u_{k+1},\ldots,u_n)$ are not $\m$-critical. 
Consequently, 
$$ \nu_n(u_1,\ldots,u_n) = 0 $$
and so the Massey product $\langle u_1, \ldots, u_n \rangle$ is trivial as desired. 
\end{proof}

As said before, in \cite{berglundjollenbeck2007} it is claimed that the Golod property is equivalent to the vanishing of the product on $\tor^S(R,k)$. However, in \cite{katthan2015} a counterexample to this claim is given. The problem is to be found in \cite{jollenbeck2006} where it is claimed that standard matchings always exist. However, the following example due to Katth\"an \cite{katthan2015} shows that this is not the case. 

\begin{example}
Let $S = k[x_1,x_2,x_3,x_4]$ and let $I$ denote the ideal
$$ I = (x_1^2, x_1x_2, x_2x_3, x_3x_4,x_4^2).$$
We will show that $I$ has a non-trivial higher Massey product. In particular, it will then follow by Theorem \ref{standardmatchingimpliesallhighermasseytrivial} that $I$ does not admit a standard matching. For ease of notation, denote the generators of $I$ by
\begin{center}
\begin{tikzpicture}
\matrix(m)[matrix of math nodes,
row sep=3em, column sep=2em,
text height=1.5ex, text depth=0.25ex]
{u_1 = x_1^2, &u_2=x_1x_2, &u_3=x_2x_3, &u_4=x_3x_4, &u_5=x_4^2.\\};
\end{tikzpicture}
\end{center}
Further, we will write $u_A = \prod_{i \in A} u_i$. Define a matching $\m$ by 
\begin{center}
\begin{tikzpicture}
\matrix(m)[matrix of math nodes,
row sep=0.25em, column sep=5em,
text height=1.5ex, text depth=0.25ex]
{ & u_{2345} \to u_{245} &  u_{345} \to u_{35} \\
u_{12345} \to u_{1235} & u_{1345} \to u_{135} & u_{234} \to u_{24} \\
& u_{1234} \to u_{124} & u_{123} \to u_{13} \\};
\end{tikzpicture}
\end{center}
These are the solid red arrows in Figure \ref{avramovsexamplegraph}. This choice of $\m$ gives an acyclic matching satisfying the first four conditions of Definition \ref{standardmatching}. 

The only Massey product that can possibly be nontrivial is $\langle u_1,u_2,u_3 \rangle$ since these are the only disjoint generators. We compute
\begin{equation*}
 \mu_2(u_1, u_3) = p(u_1u_3) = (1- d \phi - \phi d)(u_1u_3) = u_1u_3 + d(u_1u_2u_3) = x_1u_2u_3 + x_3u_1u_2
\end{equation*}
and
\begin{equation*}
 \mu_2(u_3, u_5) = p(u_3u_5) = (1- d \phi - \phi d)(u_3u_5) = u_3u_5 + d(u_3u_4u_5) = x_2u_4u_5 + x_4u_3u_4.
\end{equation*}
Therefore, in $\tor^S(S/I,k)$ both $u_1u_3$ and $u_3u_5$ are zero and so the Massey product $\langle u_1,u_2,u_3 \rangle$ is defined. \\
To get rid of signs, we assume the characteristic of $k$ is two. Then we have
\begin{equation*}
\begin{split}
 \lambda_3(u_1, u_3, u_5) &= \lambda_2(\phi \lambda_1 u_1, \phi \lambda_2(u_3, u_5)) + \lambda_2(\phi \lambda_2(u_1,u_3), \phi \lambda_1 u_5) \\
 &= \lambda_2(u_1, \phi \lambda_2(u_3, u_5)) + \lambda_2(\phi \lambda_2(u_1,u_3), u_5) \\
 &= \lambda_2(u_1, u_3u_4u_5)) + \lambda_2(u_1u_2u_3, u_5) \\
 &= u_1u_3u_4u_5 + u_1u_2u_3u_5.
\end{split}
\end{equation*}
Now,
\begin{equation*}
\begin{split}
 p(u_1u_3u_4u_5) &= (1-d \phi - \phi d)(u_1u_3u_4u_5) \\
 &= u_1u_3u_4u_5 - x_1^2 \phi(u_3u_4u_5) + x_2 \phi(u_1u_4u_5)\\
 &- \phi(u_1u_3u_5) + x_4 \phi(u_1u_3u_4) \\
 &= u_1u_3u_4u_5 - u_1u_3u_4u_5 \\
 &= 0 
\end{split}
\end{equation*}
and
\begin{equation*}
\begin{split}
p(u_1u_2u_3u_5) &= (1 - d \phi - \phi d)(u_1u_2u_3u_5) \\
&=u_1u_2u_3u_5 + d(u_1u_2u_3u_4u_5) - x_1 \phi(u_2u_3u_5) \\
&+ \phi(u_1u_3u_5) - x_3 \phi(u_1u_2u_5) + x_4^2 \phi(u_1u_2u_3) \\
&= u_1u_2u_3u_5 + x_1 u_2u_3u_4u_5 - u_1u_3u_4u_5 + u_1u_2u_4u_5 \\
&-u_1u_2u_3u_5  + x_4 u_1u_2u_3u_4 + u_1u_3u_4u_5 \\
&= x_1 u_2u_3u_4u_5 + u_1u_2u_4u_5 + x_4 u_1u_2u_3u_4.
\end{split}
\end{equation*}
Thus,
$$ \mu_3(u_1,u_3u_5) = x_1 u_2u_3u_4u_5 + u_1u_2u_4u_5 + x_4 u_1u_2u_3u_4 $$
which does not lie in the maximal ideal. 
Next, we show that the indeterminancy of $\langle u_1,u_3,u_5 \rangle$ is zero. Again, it is sufficient to show that
$$(u_1,u_5) \cap \tor^S_4(R,k) = 0.$$
So suppose $u_1v \in \tor^S_4(R,k)$. Since $\mdeg(u_1) = x_1^2$, it follows that $\mdeg(v) = x_2x_3x_4^2$. Since there are no critical cells of multidegree $x_2x_3x_4^2$, we get $v=0$. Therefore, $\langle u_1,u_3,u_5 \rangle = u_1u_2u_4u_5$ and so $S/I$ has a nontrivial Massey product.
\end{example}

\begin{remark}
Since $S/I$ has a non-trivial higher Massey product, it follows that there does not exist a dg algebra structure on the minimal free resolution of $S/I$. Indeed, $S/I$ was the first example of such a monomial ring \cite{avramov1981} but the original proof uses different methods to establish this. 
\end{remark}

\begin{figure}
\begin{center}
\begin{sideways}
\begin{tikzpicture}[descr/.style={fill=white,inner sep=1.5pt}]

\node (45) at (0,0) {$u_{45}$};
\node (35) at (2,0) {$u_{35}$};
\node (34) at (4,0) {$u_{34}$};
\node (25) at (6,0) {$u_{25}$};
\node (24) at (8,0) {$u_{24}$};
\node (23) at (10,0) {$u_{23}$};
\node (15) at (12,0) {$u_{15}$};
\node (14) at (14,0) {$u_{14}$};
\node (13) at (16,0) {$u_{13}$};
\node (12) at (18,0) {$u_{12}$};

\node (345) at (0,2) {$u_{345}$};
\node (245) at (2,2) {$u_{245}$};
\node (235) at (4,2) {$u_{235}$};
\node (234) at (6,2) {$u_{234}$};
\node (145) at (8,2) {$u_{145}$};
\node (135) at (10,2) {$u_{135}$};
\node (134) at (12,2) {$u_{134}$};
\node (125) at (14,2) {$u_{125}$};
\node (124) at (16,2) {$u_{124}$};
\node (123) at (18,2) {$u_{123}$};

\node (2345) at (2,4) {$u_{2345}$};
\node (1345) at (6,4) {$u_{1345}$};
\node (1245) at (9,4) {$u_{1245}$};
\node (1235) at (12,4) {$u_{1235}$};
\node (1234) at (16,4) {$u_{1234}$};

\node (5) at (2,-2) {$u_{5}$};
\node (4) at (6,-2) {$u_{4}$};
\node (3) at (9,-2) {$u_{3}$};
\node (2) at (12,-2) {$u_{2}$};
\node (1) at (16,-2) {$u_{1}$};

\node (12345) at (9,6) {$u_{12345}$};

\node (0) at (9,-4) {$1$};

\path [->] (5) edge[thick] (0);
\path [->] (4) edge[thick] (0);
\path [->] (3) edge[thick] (0);
\path [->] (2) edge[thick] (0);
\path [->] (1) edge[thick] (0);

\path [->] (45) edge[thick] (5);
\path [->] (45) edge[thick] (4);

\path [->] (35) edge[thick] (5);
\path [->] (35) edge[thick] (3);

\path [->] (34) edge[thick] (4);
\path [->] (34) edge[thick] (3);

\path [->] (25) edge[thick] (5);
\path [->] (25) edge[thick] (2);

\path [->] (24) edge[thick] (4);
\path [->] (24) edge[thick] (2);

\path [->] (23) edge[thick] (3);
\path [->] (23) edge[thick] (2);

\path [->] (15) edge[thick] (5);
\path [->] (15) edge[thick] (1);

\path [->] (14) edge[thick] (4);
\path [->] (14) edge[thick] (1);

\path [->] (13) edge[thick] (3);
\path [->] (13) edge[thick] (1);

\path [->] (12) edge[thick] (2);
\path [->] (12) edge[thick] (1);

\path [->] (345) edge[thick] (45);
\path [->] (345) edge[red,thick] (35);
\path [->] (345) edge[thick] (34);

\path [->] (245) edge[thick] (45);
\path [->] (245) edge[thick] (25);
\path [->] (245) edge[thick] (24);

\path [->] (235) edge[thick] (35);
\path [->] (235) edge[thick] (25);
\path [->] (235) edge[thick] (23);

\path [->] (234) edge[thick] (34);
\path [->] (234) edge[red,thick] (24);
\path [->] (234) edge[thick] (23);

\path [->] (145) edge[thick] (45);
\path [->] (145) edge[thick] (15);
\path [->] (145) edge[thick] (14);

\path [->] (135) edge[thick] (35);
\path [->] (135) edge[thick] (15);
\path [->] (135) edge[thick] (13);

\path [->] (134) edge[thick] (34);
\path [->] (134) edge[thick] (14);
\path [->] (134) edge[thick] (13);

\path [->] (125) edge[thick] (25);
\path [->] (125) edge[thick] (15);
\path [->] (125) edge[thick] (12);

\path [->] (124) edge[thick] (24);
\path [->] (124) edge[thick] (14);
\path [->] (124) edge[thick] (12);

\path [->] (123) edge[thick] (23);
\path [->] (123) edge[red,thick] (13);
\path [->] (123) edge[thick] (12);

\path [->] (2345) edge[thick] (345);
\path [->] (2345) edge[red,thick] (245);
\path [->] (2345) edge[red,dashed,thick] (235);
\path [->] (2345) edge[thick] (234);

\path [->] (1345) edge[thick] (345);
\path [->] (1345) edge[thick] (145);
\path [->] (1345) edge[red,thick] (135);
\path [->] (1345) edge[thick] (134);

\path [->] (1245) edge[thick] (245);
\path [->] (1245) edge[thick] (145);
\path [->] (1245) edge[thick] (125);
\path [->] (1245) edge[thick] (124);

\path [->] (1235) edge[thick] (235);
\path [->] (1235) edge[red,dashed,thick] (135);
\path [->] (1235) edge[thick] (125);
\path [->] (1235) edge[thick] (123);

\path [->] (1234) edge[thick] (234);
\path [->] (1234) edge[red,dashed,thick] (134);
\path [->] (1234) edge[red,thick] (124);
\path [->] (1234) edge[thick] (123);

\path [->] (12345) edge[thick] (2345);
\path [->] (12345) edge[red,dashed,thick] (1345);
\path [->] (12345) edge[red,dashed,thick] (1245);
\path [->] (12345) edge[red,thick] (1235);
\path [->] (12345) edge[thick] (1234);

\end{tikzpicture}   
\end{sideways}
\end{center} 
\caption{The graph $G_T$ corresponding to the ideal $I$.}
\label{avramovsexamplegraph}
\end{figure}
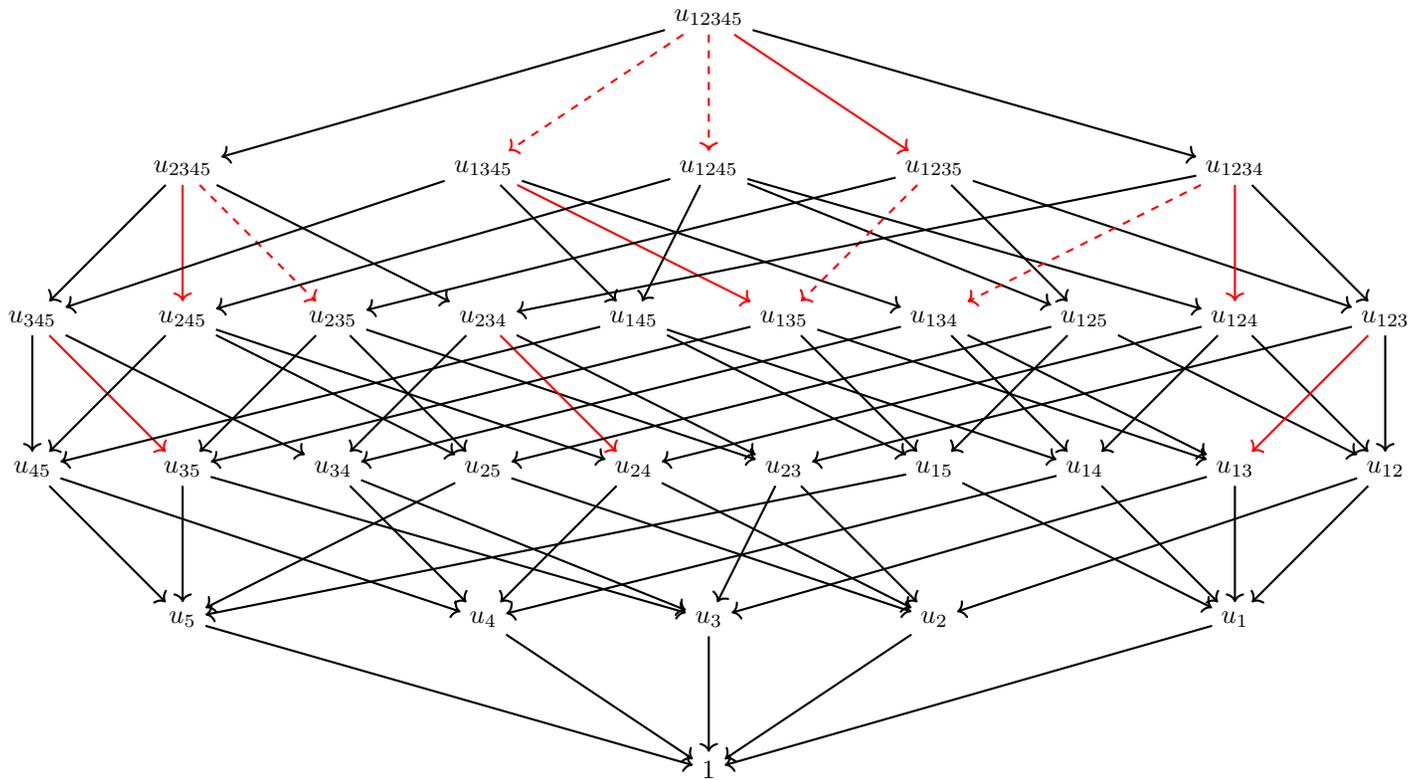

\section*{Acknowledgements}
The author would like to thank his PhD supervisor Jelena Grbi\'c for advice and guidance, 
Fabio Strazzeri and Francisco Belch\'{\i} for useful discussions on respectively algebraic Morse theory and $A_{\infty}$-algebras. Further, the author would like to thank Bernhard K\"ock and Taras Panov for helpful comments on an earlier version of this manuscript.

\clearpage

\bibliographystyle{plain}
\bibliography{main}
\end{document}